\documentclass[11pt,reqno]{amsart}
\usepackage{mathrsfs}
\usepackage{url}
\usepackage{mathtools}
\usepackage{latexsym,epsfig,amssymb,amsmath,amsthm,color,url,bm}
\usepackage[inline,shortlabels]{enumitem}
\usepackage{hyperref}
\usepackage[foot]{amsaddr}
\usepackage{amsmath,amsbsy}
\usepackage{cleveref}
\usepackage{mwe}
\RequirePackage[numbers]{natbib}
\usepackage{mathptmx}
\usepackage[text={16cm,24cm}]{geometry}
\usepackage{xcolor}

\allowdisplaybreaks 
 \numberwithin{equation}{section}
\newtheorem{theorem}{Theorem}[section]
\newtheorem{prop}[theorem]{Proposition}
\newtheorem{lemma}[theorem]{Lemma}
\newtheorem{corollary}[theorem]{Corollary}

\newcommand\Item[1][]{%
  \ifx\relax#1\relax  \item \else \item[#1] \fi
  \abovedisplayskip=0pt\abovedisplayshortskip=0pt~\vspace*{-\baselineskip}}

\theoremstyle{definition}

\theoremstyle{definition}

\DeclareMathOperator{\Prob}{\mathbf{P}}
\DeclareMathOperator{\E}{\mathbf{E}}

\DeclareMathOperator{\dist}{dist}

\DeclareMathOperator{\indeg}{indeg}

\title[]{A model of opinion dynamics evolving via a preferential attachment mechanism involving multiple extractions}
\date{}
\author{Sooraj M,\ Moumanti Podder and Archi Roy}
\address{Sooraj M,\ Indian Institute of Science Education and Research (IISER) Pune, Dr.\ Homi Bhabha Road, Pashan, Pune 411008, Maharashtra, India.}
\address{Moumanti Podder, Indian Institute of Science Education and Research (IISER) Pune, Dr.\ Homi Bhabha Road, Pashan, Pune 411008, Maharashtra, India.}
\address{Archi Roy, Indian Institute of Management, Kozhikode 673570, Kerala, INDIA.}
\email{sooraj.m@students.iiserpune.ac.in}
\email{moumanti@iiserpune.ac.in}
\email{archiroy@iimk.ac.in}

\begin{document}
\bibliographystyle{plainnat}

\begin{abstract}
We study a model of opinion dynamics / social learning / peer-review-based market economics on an evolving network, wherein 
\begin{enumerate*}
\item each of the first $N$ agents adopts one of two available opinions arbitrarily, and
\item the $(n+1)$-st agent, for $n\geqslant N$, upon arrival, draws a sample of size $k_{n}$, with replacement, from the past agents, such that the $i$-th agent (for $i\leqslant n$) is included in the sample with probability proportional to the number of times they were \emph{previously} sampled and agreed with.
\end{enumerate*}
The $(n+1)$-st agent then decides which opinion to adopt 
\begin{enumerate*}
\item based on the proportion of sampled agents conforming to each of the two opinions, and
\item according to a stochastic update rule that involves a memory parameter and a rather general reinforcement function.
\end{enumerate*}
We study both 
\begin{enumerate*}
\item the scenario where $k_{n}=k$ remains fixed with $n$, and
\item the scenario where $k_{n}$ grows at a suitable rate with $n$.
\end{enumerate*}
This model can be represented as an evolving preferential attachment network wherein each vertex is endowed with one of two possible states, and all edges are directed. It can also be framed as a variant of the celebrated elephant random walk. We study the asymptotics of this stochastic process -- in particular, the almost sure convergence, and in case of fixed sample sizes, second order fluctuations, of the relative dominance of each opinion, the influence capital and overall network activity.
\end{abstract}

\subjclass[2020]{}

\keywords{elephant random walks; reinforced random walks; random walks with memory; general reinforcement functions; strong and weak convergence; preferential attachment models; non-uniform memory-based sampling schemes}

\maketitle
\section{Introduction}\label{sec:intro}

Opinion dynamics on networks provide a mathematical framework for understanding how local interactions among individuals give rise to large-scale collective behavior, with applications spanning a wide range of fields, including ecology, physics, and economics (see \cite{starnini2025opinion,shirzadi2025opinion,caldarelli2026physics} for a comprehensive review). In this paper, we consider an opinion dynamics model where sequentially arriving agents choose among a pair of competing \textit{opinions} (or \textit{types)} based on the the feedback of their past adopters. Specifically, we assume that agents interact over an evolving network denoted by the sequence $\{G_n\}$, in which each agent corresponds to a vertex and each interaction is represented by a \emph{directed} edge. The network is constructed according to the following mechanism. We begin with an initial population of $N\geq 1$ agents, each of whom chooses one of the two opinions arbitrarily. These agents are represented by $N$ isolated vertices in the initial graph $G_N$, and thereafter the network evolves by the sequential arrival of new agents. At the beginning of epoch $(n+1)$, for $n\geqslant N$, a new agent enters the scene and samples $k_n$ (where $k_n\leqslant n$) of the existing agents with replacement according to a linear preferential attachment mechanism \citep{barabasi1999emergence,albert2002statistical}, whereby the probability of sampling an agent is proportional to its current in-degree. We allow for $k_n$ to be either fixed for all sampling events, or to grow as the size of the network itself grows. Based on the proportion of sampled agents conforming to a particular opinion (out of the two available opinions), together with a \textit{memory parameter} and a \textit{reinforcement function}, the newcomer chooses one of the two opinions, and joins those among the sampled agents whose opinions agree with its own via outgoing edges (counting repetitions, i.e.\ a sampled agent with the same opinion as the newcomer receives as many incoming edges from the newcomer as the number of times they appeared in the sample). A more detailed description of the mathematical model is presented in \S\ref{sec:model}. 

In this work, we are specifically interested in the asymptotic behavior of three aggregate statistics: the \textit{relative dominance} of an opinion, defined as the proportion of agents in the network holding that opinion; its \textit{influence capital}, measured by the sum of the in-degrees of all agents holding that opinion; and the overall \textit{network activity}, represented by the total in-degree of all agents in the network. Each newly arriving agent updates these quantities through its sampling of $k_n$ past adopters (with replacement) and the pre-decided opinion-adoption mechanism. Consequently, an increment in each of these statistics is determined by a retrospective sample of size $k_n$ chosen from the entire history of the evolving network, making their evolution a long-memory self-interacting process. This structure is closely analogous to the \textit{elephant random walk} (ERW) introduced in \cite{schutz2004elephants}. In the classical ERW, a walker on the integer lattice possesses complete memory of its trajectory and, at each step, recalls one past step uniformly at random, following which it either repeats or reverses the recalled step with prespecified probabilities. Over the recent years, several extensions to this model have been proposed (see \cite{parra2026coordinatewiseelephantrandomwalk, nakano2025elephantrandomwalkpolynomially, bercu2025multidimensionalelephantrandomwalk, Roy_2025}, among recent developments). In a setup closely related to ours, \cite{baur2020class} replaced uniform sampling with sampling emulating linear preferential attachment -- however, here too, as in the classical ERW, a \emph{single} step is sampled from the past prior to deciding every new step. Our setting differs in a couple of crucial aspects:
\begin{enumerate*}
\item the $(n+1)$-st agent to arrive at the scene draws a sample of size $k_n\geqslant 1$, with replacement, from among the first $n$ agents, where $k_{n}$ may not only exceed $1$, but may also vary with $n$, and 
\item the stochastic rule via which the opinion to be adopted by the $(n+1)$-st agent is decided, based on the findings from the sample drawn, involves a rather general reinforcement function.
\end{enumerate*}
The first of these two aspects places our model in the framework of ERWs allowing multiple extractions, recently studied in \cite{franchini2025elephant, podder2026elephant, m2026elephantrandomwalkattributed}. Extending the framework of ERWs taking place on a fixed, unchanging integer lattice, involving multiple extractions but allowing only uniform sampling schemes, to a growing, directed network (that suitably captures the non-uniform, history-reliant sampling scheme previously described) presents a substantial amount of new analytical challenges that we tackle in the subsequent sections. 


\subsection{Organization of the paper} 
We begin by discussing the various motivations propelling an investigation of the models studied in this paper in \S\ref{sec:motivation}, following which the models are formally introduced in \S\ref{sec:model}. Two different models have been addressed in this paper, 
\begin{enumerate*}
\item the first of which allows each incoming agent (apart from the first $N$ agents) to draw a sample of a \emph{fixed} size, $k$, with replacement, from the set of past agents,
\item while in the second, the $(n+1)$-st agent, for $n\geqslant N$, draws a sample, with replacement, of size $k_{n}$ from the set of past agents, where $k_{n}$ is assumed to grow at a suitable rate with $n$.
\end{enumerate*}
The former has been described in \S\ref{subsec:model_fixed_sample_size}, while the latter appears in \S\ref{subsec:model_growing_sample_size}. The main results of this paper have been enumerated in \S\ref{sec:main_results}, with \S\ref{subsec:main_results_fixed_sample_size} dedicated to the statements of the results pertaining to the model in \S\ref{subsec:model_fixed_sample_size}, and \S\ref{subsec:main_results_growing_sample_size} containing the statements of the results pertaining to the model in \S\ref{subsec:model_growing_sample_size}. In \S\ref{sec:results_from_literature} we have included definitions from the literature on stability theory for dynamical systems (driven by differential equations or, more generally, differential inclusions) that are relevant for stating and / or proving the main results of this paper. We have also included the statements of three results from the literature (in particular, \cite{borkar2008stochastic} and \cite{benaim2005stochastic}) that are crucial for establishing our main results. The proofs of the results stated in \S\ref{subsec:main_results_fixed_sample_size} have been detailed in \S\ref{sec:proofs_fixed_sample_size}, while the results stated in \S\ref{subsec:main_results_growing_sample_size} have been proved in \S\ref{sec:proofs_growing_sample_size}.

\section{Motivation and related models}\label{sec:motivation}
Our model exhibits a natural co-evolution between network structure and opinion dynamics: new agents arrive and form opinions by preferentially sampling previous agents, and the resulting distribution of opinions across the network determines which agents gain influence, which in turn dictates how future opinions are formed. Such models in which the underlying network and the stochastic process unfolding on it co-evolve have been extensively studied in the literature (see \cite{gross2007adaptive} for an early survey). The model we consider is specifically similar to the one proposed in \cite{antunovic2016coexistence}, where a newcomer joining the (undirected) network of agents at time $t=n$ adopts an opinion with probability proportional to the prevalence of that opinion in a sample of neighbors chosen according to a preferential attachment mechanism. The model has since been extended in several subsequent works (see, for example, \cite{haslegrave2018non,jordan2018preferential,haslegrave2025competing}), and the central focus of these works has been to understand the asymptotic coexistence of the opinions in different regimes of adoption probabilities. More recently, \cite{bhamidi2026network} studied a related model of preferential attachment trees and developed a scaling theory for its growth dynamics. As mentioned in \S\ref{sec:intro}, a key contrast with previous works in the same vein lies in the choice of the statistics we study, as well as in the fact that we operate on a directed network rather than an undirected one. 

The proposed model is sufficiently flexible to capture a broad range of opinion dynamics. For example, it may be viewed as a variant of the voter model in a growing electorate, where the population expands over time rather than remaining fixed. Each new voter enters the network sequentially, samples from a collection of `influential' or `popular' voters, forms an opinion based on the opinions in the sampled group, and then connects to those sampled voters who share its adopted opinion. Unlike the classical voter model (\citep{holley1975ergodic}), where individuals repeatedly update their opinions by being swayed by their neighbors, here, each voter, upon adopting a certain opinion, retains that opinion forever. Moreover, the sampled voters are chosen according to a preferential attachment mechanism, so that voters who are, in some sense, more persuasive -- those who have accumulated \emph{many} incoming connections already -- are more likely to be consulted by future arrivals. Research on voter models on networks has grown substantially over the past decade (we refer the readers to \cite{rivera2018voting} for a comprehensive review). Broadly, the literature divides into two streams: voter models on static networks, where the underlying graph of connections is fixed and only opinions evolve, and voter models on co-evolving networks (also called dynamic or adaptive networks), where the network topology and opinions change simultaneously. Some important extensions of the classical voter model on static networks include 
\begin{enumerate*}
\item the noisy and heterogeneous voter models (\citep{granovsky1995noisy,masuda2010heterogeneous,baxter2011voter}) in which voters are also allowed to change their opinions spontaneously, independent of their neighbors, 
\item the $q$-voter model (\citep{castellano2009nonlinear,vieira2018threshold}) in which a voter updates its opinion according to the collective opinion (which is often the opinion of the majority, as considered in  \citep{chen2018phase,kanoria2011majority,zehmakan2020opinion,berkowitz2022central,haslegrave2017majority}) of a uniformly sampled group of neighbours, as opposed to copying a single neighbour, 
\item voter models with quenched disorder, in which some individuals act as contrarians by adopting the opinion that is opposite of that of a randomly selected neighbour (\citep{masuda2013voter}), or as zealots whose opinions remain fixed throughout the dynamics (\citep{mobilia2007role}), and 
\item discordant voter models (\citep{cooper2018discordant,avena2022discordant,capannoli2025evolution}) in which, upon interacting with a disagreeing neighbour, a voter may either adopt the neighbour's opinion or influence the neighbour to adopt their own opinion.
\end{enumerate*}
Several other extensions can be found in \cite{gastner2018consensus, gastner2019voter, ramirez2024ordering, avena2024meeting, llabres2026partisan}. Among co-evolving voter models, a popular class is rewiring models, in which a voter interacting with a disagreeing neighbour either adopts the neighbour's opinion or replaces the disagreeing connection with one to a like-minded voter. This rewiring mechanism was introduced by \cite{holme2006nonequilibrium} and subsequently studied in \cite{durrett2012graph,basu2017evolving,basak2015evolving}, with Erd\H{o}s–R\'{e}nyi random graph as the initial choice of networks. A few related models include
\begin{enumerate*}
\item \cite{eichhorn2026offended}, where, upon each interaction, either the two voters reach agreement or the edge between them is permanently deleted instead of being rewired,
\item a degree-preserving rewiring by \cite{avena2025voter}, where, during each epoch, a pair of edges is broken and the voters at the corresponding endpoints are randomly re-matched, and
\item \cite{malik2016transitivity} where voters are preferentially rewired to neighbours of neighbours.
\end{enumerate*}
More recently, an interesting model was proposed by \cite{astoquillca2026ergodicity}, where interactions along positively-signed edges cause the two endpoints to align their opinions, while interactions along negatively-signed edges drive the endpoints towards opposite opinions, and the edge signs themselves evolve independently over time via a dynamical percolation process. Several other interesting co-evolving voter models can be found in \cite{papanikolaou2022consensus, timpanaro2024emergence, kravitzch2023analysis, klamser2017zealotry, horstmeyer2020adaptive, choi2025analysis, min2023coevolutionary, baldassarri2024opinion, avila2026temporal}, among others. Most of these works focus on quantities such as time to reach consensus, the phase transition between consensus and fragmentation, and temporal conductance. An important feature of our model is the formation of connection through preferential attachment rather than through uniform sampling. In the existing literature on voter models, non-uniform sampling has been considered by \cite{baronchelli2011voter, lucke2026accurate} in the context of static networks with fixed edge-weights, where the voter adopts the opinion of a neighbour with probability proportional to the weight of the edge connecting them. The latter also allowed for connection with multiple neighbours in the network. A similar setup was considered in \cite{moinet2018generalized}, where each voter in the network was endowed with an attractiveness factor which modulated their probabilities of forming connections. In \cite{fernley2025phase,fernley2024discursive,fernley2023voter}, the voter model on scale-free networks was studied, including the Norros-Reittu model (\citep{norros2006conditionally}) in which a random number of edges, following a Poisson distribution, is formed by each new vertex, unlike the classical linear preferential attachment model (\citep{barabasi1999emergence,albert2002statistical}).

Our proposed model can also be viewed as a social learning model \citep{ellison1993rules}, in which economic agents in a social network repeatedly make decisions under uncertainty by learning from both their own experience (often termed as `private information') and the experiences of the neighbours. In the classical form of the social learning model, each vertex of the network represents an agent seeking to infer an unknown state of the world, which may be categorical (e.g.\ the optimal choice between two products) or continuous (e.g.\ the size of the U.S. national debt). In our model, each arriving agent (with no private information), rather than experimenting independently, observes the experiences of a sample of previous agents and infers which of two competing choices is more likely to be profitable. Agents who have been frequently sampled and imitated by later arrivals accumulate greater influence, making them more likely to be consulted by future agents and naturally giving rise to a preferential attachment mechanism. The social learning literature broadly distinguishes between Bayesian models, in which agents update beliefs optimally via Bayes' rule, and non-Bayesian models, in which beliefs evolve through simpler heuristic rules. Our model falls into the latter category. A seminal contribution to this tradition is \cite{golub2010naive}, where agents receive independent noisy private signals and iteratively update by taking weighted averages of neighbours' opinions. Similarly, \cite{jadbabaie2013information} proposes a linear updating rule combining private information with neighbours' beliefs. Further extensions include
\begin{enumerate*}
\item \cite{candogan2020social}, which incorporates the influence of the information platform through which agents receive private signals,
\item \cite{hunter2022optimizing}, which introduces stubborn agents whose beliefs never change, and
\item \cite{chitiva2024continuous}, which studies competing lobbies that strategically bias agents' opinions.
\end{enumerate*}
Additional contributions include \cite{parasnis2020non, feldman2014reaching, molavi2018theory, liu2011non}. On the Bayesian side, \begin{enumerate*}
\item \cite{mossel2016efficient} considers agents who iteratively combine noisy private signals with neighbours' estimates via Bayes' rule,
\item \cite{bowen2023learning} considers agents who strategically share only selected portions of their private information, resulting in heterogeneous information environments, and 
\item \cite{williams2024echo} studies learning under preference uncertainty, where agents discount divergent opinions, giving rise to echo chamber dynamics. 
\end{enumerate*}
Other Bayesian models include \cite{gale2003bayesian, perreault2012bayesian, ifrach2019bayesian, kanoria2011majority}, and we refer the reader to \cite{acemoglu2011bayesian} for a comprehensive review. Since agents in our model enter the network sequentially and make their decisions based only on information available at the time of arrival, our framework also belongs to the class of sequential social learning models (see, for example, \cite{bahar2020multi, guo2026robust, arieli2020social, lu2024enabling, uradnik2025maximizing}). Notably, similar modeling frameworks can be adapted to a variety of settings in which decisions are shaped by social interactions on a network, including employment outcomes (\citep{calvo2004effects,ioannides2004job}), technology adoption (\citep{munshi2004social,conley2001social}), and many forms of social contagion.

\section{Description of the model}\label{sec:model}
\subsection{When each sample drawn is of the same size, $k$}\label{subsec:model_fixed_sample_size} From here onward, we let $\mathbb{N}$ indicate the set of all positive integers, $\mathbb{N}_{0}$ indicate the set of all non-negative integers, and we set $[m]=\{1,2,\ldots,m\}$ for each $m\in\mathbb{N}$. We begin by fixing $N\in\mathbb{N}$, and letting $G_{N}=\{v_{1},v_{2},\ldots,v_{N}\}$ be an independent set (in other words, a graph with no edges), with each vertex $v_{i}$ endowed with an \emph{opinion} or \emph{state} $X_{i}$. For $n>N$, the directed graph we obtain at the end of epoch $n$, on vertices $v_{1},\ldots,v_{n}$, is referred to as $G_{n}$. The state of $v_{i}$ is denoted by $X_{i}$ for all $i\in[n]$, with each $X_{i}\in\{\pm 1\}$. The state of a vertex is decided right after it is introduced, and it does not change thereafter. We let $\indeg_{n}(v_{i})$ denote the in-degree of the vertex $v_{i}$ in $G_{n}$, i.e.\
\begin{equation}
\indeg_{n}(v_{i})=\sum_{j\in[n]}\chi\left\{(v_{j},v_{i})\in E(G_{n})\right\},\label{indeg_defn}
\end{equation}
where $E(G_{n})$ is the set of all directed edges of $G_{n}$, and $(v_{j},v_{i})$ indicates the edge directed \emph{from} $v_{j}$ \emph{towards} $v_{i}$. The notation $\chi(A)$, for any event $A$, denotes the indicator for the event $A$. As we shall see (from the description that follows), it suffices to consider, in \eqref{indeg_defn}, the sum over all indices $j\in\{i+1,\ldots,n\}$.

At the beginning of epoch $(n+1)$, for $n\geqslant N$, a new vertex, $v_{n+1}$, is introduced into the graph. A random sample of size $k$, for a prespecified $k\in\mathbb{N}$, is drawn, with replacement, from the existing vertices, as follows: if $U_{n,1},\ldots,U_{n,k}$ indicate the indices of the vertices chosen in the sample, then $U_{n,1},\ldots,U_{n,k}$ are i.i.d.\ with
\begin{align}
\Prob\left[U_{n,1}=m\big|\mathcal{F}_{n}\right]=\frac{\indeg_{n}(v_{m})+\beta}{\sum_{j\in[n]}\indeg_{n}(v_{j})+n\beta} \text{ for all }m\in[n],\label{sampling_probability_proportional_to_indegree}
\end{align}
where $\beta>0$ is a prespecified constant, and $\mathcal{F}_{n}$ is the $\sigma$-field consisting of all information pertaining to the process up to and including epoch $n$. Let $P_{n}$ indicate the number of sampled customers, counting repetitions, in state $+1$, i.e.\
\begin{equation}
P_{n}=\sum_{i\in[k]}\chi\left\{X_{U_{n,i}}=+1\right\}.\label{P_{n}_defn}
\end{equation}
The state $X_{n+1}$ of $v_{n+1}$ is now decided as follows:
\begin{equation}\label{X_{n+1}_conditional_distribution}
X_{n+1}=
\begin{cases}
+1 &\text{with probability }pf(P_{n}/k)+(1-p)\{1-f(P_{n}/k)\},\\
-1 &\text{with probability }(1-p)f(P_{n}/k)+p\{1-f(P_{n}/k)\},
\end{cases}
\end{equation}
where $p\in[0,1]\setminus\{1/2\}$ is the \emph{memory parameter} or \emph{self-excitation parameter}, and $f:[0,1]\rightarrow[0,1]$ is the \emph{reinforcement function}. Finally, having decided $X_{n+1}$, the vertex $v_{n+1}$ joins every vertex $v_{i}$ that it sampled and agreed with, via as many outgoing edges as the number of times $v_{i}$ appeared in the sample. In other words, we draw the directed edge $(v_{n+1},v_{U_{n,i}})$ for each $i\in[k]$ such that $X_{n+1}=X_{U_{n,i}}$.
\begin{figure}[h!]
  \centering
    \includegraphics[width=0.5\textwidth]{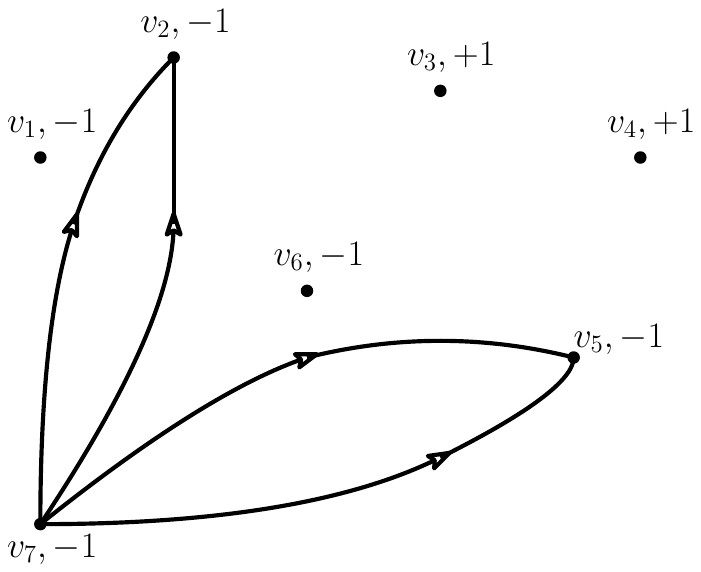}
\caption{In this example, the incoming vertex is $v_{n+1}=v_{7}$, and sample size is $k=6$, with sampled indices $(U_{n,1},\ldots,U_{n,6})=(2,4,2,5,5,4)$. Since $X_{7}=X_{2}=X_{5}=-1$ while $X_{4}=+1$, we draw two directed edges from $v_{7}$ to $v_{2}$, and two directed edges from $v_{7}$ to $v_{5}$.}
  \label{galaxy_graph}
\end{figure}
The primary object of interest in this model is the proportion, in $G_{n}$, of the vertices with opinion $+1$, especially as $n$ grows large. 

\subsection{When the sample size $k_{n}$ grows with $n$}\label{subsec:model_growing_sample_size} In the model described in \S\ref{subsec:model_fixed_sample_size}, $k$ remains fixed as $n$ grows. In this paper, we also consider a generalization of the model described above, wherein, for $n\geqslant N$, the size of the sample drawn during epoch $(n+1)$ (i.e.\ upon the introduction of the vertex $v_{n+1}$) is $k_{n}$, with $\{k_{n}:n\geqslant N\}$ a prespecified sequence of positive integers growing with $n$. The sampling scheme is, as before, with replacement, and if $U_{n,1},\ldots,U_{n,k_{n}}$ are the indices of the sample drawn during epoch $(n+1)$, then these are i.i.d.\ with
\begin{align}
\Prob\left[U_{n,1}=m\big|\mathcal{F}_{n}\right]=\frac{\indeg_{n}(v_{m})+\beta_{n}}{\sum_{j\in[n]}\indeg_{n}(v_{j})+n\beta_{n}} \text{ for all }m\in[n],\label{sampling_probability_proportional_to_indegree_growing_sample_size}
\end{align}
where $\{\beta_{n}:n\geqslant N\}$ is a given sequence of positive reals. We assume the existence of some constant $\alpha>0$ that has no dependence on $n$, such that 
\begin{equation}\label{beta_{n}_s_{n}_defns}
n \beta_{n}=\alpha s_{n}, \text{ where } s_{n}=\sum_{i=N+1}^{n}k_{i-1} \text{ for each }n\geqslant N+1.
\end{equation}
In the probabilities stated in \eqref{X_{n+1}_conditional_distribution}, we now replace $k$ by $k_{n}$. To obtain results for this generalized model, we impose suitable growth conditions on $\{k_{n}\}$ as $n\rightarrow\infty$.

\section{Main results}\label{sec:main_results}
Let us define, up to and including epoch $n$, the following quantities for each of the models described in \S\ref{subsec:model_fixed_sample_size} and \S\ref{subsec:model_growing_sample_size} (in the definitions that follow, $B_{n}=C_{n}=0$ for all $n\leqslant N$):
\begin{enumerate}
\item $A_{n}$, which is the number of vertices in $G_{n}$ with opinion $+1$, i.e.\ 
\begin{equation}\label{A_{n}_defn}
A_{n}=\sum_{i\in[n]}\chi\left\{X_{i}=+1\right\},
\end{equation}
\item $B_{n}$, which is the sum of the in-degrees of all vertices in $G_{n}$ with opinion $+1$, i.e.\
\begin{equation}\label{B_{n}_defn}
B_{n}=\sum_{i\in[n]}\indeg_{n}(v_{i})\chi\left\{X_{i}=+1\right\},
\end{equation}
\item and $C_{n}$, which is the sum of the in-degrees of all vertices in $G_{n}$, i.e.\ $C_{n}=\sum_{i\in[n]}\indeg_{n}(v_{i})$.
\end{enumerate}
We define the function $g:[0,1]\rightarrow[0,1]$ as
\begin{equation}\label{g_defn}
g(x)=pf(x)+(1-p)\{1-f(x)\} \text{ for all }x\in[0,1],
\end{equation}
where $f$, recall, is the reinforcement function appearing in \eqref{X_{n+1}_conditional_distribution}.

We now state the main results of this paper. The results pertaining to the model described in \S\ref{subsec:model_fixed_sample_size} have been stated in \S\ref{subsec:main_results_fixed_sample_size}, whereas the results pertaining to the model described in \S\ref{subsec:model_growing_sample_size} have been stated in \S\ref{subsec:main_results_growing_sample_size}. 

\subsection{Main results pertaining to the model described in \S\ref{subsec:model_fixed_sample_size}}\label{subsec:main_results_fixed_sample_size}
For the model described in \S\ref{subsec:model_fixed_sample_size}, we define the compact set
\begin{equation}\label{domain_defn}
\mathcal{S}=\left\{(a,b,c)\in[0,1]\times[0,k]\times[0,k]:c\geqslant b\right\},
\end{equation}
and the function $q_{\beta}:\mathcal{S}\rightarrow[0,1]$, for any constant $\beta>0$, as
\begin{equation}\label{q_defn}
q_{\beta}(a,b,c)=\frac{b+\beta a}{c+\beta} \quad\text{for each } (a,b,c)\in\mathcal{S}.
\end{equation}
For each $(a,b,c)\in\mathcal{S}$, we set $H(a,b,c)=(H_{1}(a,b,c),H_{2}(a,b,c),H_{3}(a,b,c))$, where
\begin{align}
H_{1}(a,b,c)={}&\sum_{i=0}^{k}g\left(\frac{i}{k}\right){k\choose i}q_{\beta}^{i}(a,b,c)\left\{1-q_{\beta}(a,b,c)\right\}^{k-i},\label{H_{1}_defn}\\
H_{2}(a,b,c)={}&\sum_{i\in[k]}i g\left(\frac{i}{k}\right){k\choose i}q_{\beta}^{i}(a,b,c)\left\{1-q_{\beta}(a,b,c)\right\}^{k-i},\label{H_{2}_defn}\\
H_{3}(a,b,c)={}&\sum_{i=0}^{k}\left[i\left\{2g\left(\frac{i}{k}\right)-1\right\}+k\left\{1-g\left(\frac{i}{k}\right)\right\}\right]{k\choose i}q_{\beta}^{i}(a,b,c)\left\{1-q_{\beta}(a,b,c)\right\}^{k-i}.\label{H_{3}_defn}
\end{align}
Right away, we see that, if $Y$ is a random variable following Binomial$(k,q_{\beta}(a,b,c))$, we have 
\begin{equation}\label{H_{1},H_{2},H_{3}_concise_expressions}
\begin{cases}
&H_{1}(a,b,c)=\E[g(Y/k)], \quad H_{2}(a,b,c)=\E[Yg(Y/k)],\\
&H_{3}(a,b,c)=\E[Y\{2g(Y/k)-1\}+k\{1-g(Y/k)\}]=\E[Yg(Y/k)+(k-Y)\{1-g(Y/k)\}].
\end{cases}
\end{equation}
Recall, as stated right before \eqref{g_defn}, that $g$ takes values in $[0,1]$, ensuring, via \eqref{H_{1},H_{2},H_{3}_concise_expressions}, that $H_{1}(a,b,c)\in[0,1]$. This also ensures, via \eqref{H_{1},H_{2},H_{3}_concise_expressions} and using the fact that $Y\leqslant k$, that $H_{2}(a,b,c)\in[0,k]$ and that $k\geqslant H_{3}(a,b,c)\geqslant H_{2}(a,b,c)$. Thus, we conclude that the function $H=(H_{1},H_{2},H_{3})$ maps the set $\mathcal{S}$ to itself.

We also define the function $h$, on $\mathcal{S}$, as follows: for each $(a,b,c)\in\mathcal{S}$, we set
\begin{equation}\label{h_defn_fixed_size}
h\left(\begin{bmatrix}
a\\
b\\
c
\end{bmatrix}\right)=\begin{bmatrix}
h_{1}(a,b,c)\\
h_{2}(a,b,c)\\
h_{3}(a,b,c)
\end{bmatrix}=H\left(\begin{bmatrix}
a\\
b\\
c
\end{bmatrix}\right)-\begin{bmatrix}
a\\
b\\
c
\end{bmatrix}=\begin{bmatrix}
H_{1}(a,b,c)\\
H_{2}(a,b,c)\\
H_{3}(a,b,c)
\end{bmatrix}-\begin{bmatrix}
a\\
b\\
c
\end{bmatrix}.
\end{equation}

Here and henceforth, the notation $\dot{x}(t)$ indicates the derivative of $x(t)$ with respect to time $t$.
\begin{theorem}\label{thm:main_1}
For the model described in \S\ref{subsec:model_fixed_sample_size}, the sequence $\{(n^{-1}A_{n},n^{-1}B_{n},n^{-1}C_{n})\}$ converges almost surely to a compact, connected, internally chain transitive invariant set, contained in $\mathcal{S}$, corresponding to the autonomous ODE
\begin{equation}\label{ODE_fixed_sample_size}
\left(\dot{a}(t),\dot{b}(t),\dot{c}(t)\right)=h\left(a(t),b(t),c(t)\right) \text{ for }t\geqslant 0, \text{ with } (a(0),b(0),c(0))=(A_{N}/N,0,0),
\end{equation}
where the function $h$ is as defined in \eqref{h_defn_fixed_size}, and $A_{N}$ is as defined via \eqref{A_{n}_defn}. 
\end{theorem}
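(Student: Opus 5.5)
We will cast $Z_n=(n^{-1}A_n,n^{-1}B_n,n^{-1}C_n)$ as a stochastic approximation scheme with step sizes $1/(n+1)$ and invoke the Kushner--Clark/Bena\"{i}m-type convergence theorem quoted from \cite{borkar2008stochastic} in \S\ref{sec:results_from_literature}. That theorem says a bounded iterate driven by a Lipschitz mean field and square-summable-step martingale noise converges almost surely to a compact connected internally chain transitive invariant set of the limiting ODE.

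\textbf{Step 1: one-step increments.} First we write the increments explicitly. Conditionally on $\mathcal{F}_n$, the draws $U_{n,1},\ldots,U_{n,k}$ are i.i.d. By \eqref{sampling_probability_proportional_to_indegree}, each draw lands on a $+1$ vertex with probability
\[
\frac{B_n+\beta A_n}{C_n+\beta n}=q_\beta\bigl(A_n/n,B_n/n,C_n/n\bigr).
\]
Hence $P_n\sim$ Binomial$(k,q_\beta(Z_n))$. Then $X_{n+1}=+1$ with probability $g(P_n/k)$ by \eqref{X_{n+1}_conditional_distribution} and \eqref{g_defn}. We also have
\begin{align*}
A_{n+1}-A_n&=\chi\{X_{n+1}=1\},\\
B_{n+1}-B_n&=P_n\chi\{X_{n+1}=1\},\\
C_{n+1}-C_n&=P_n\chi\{X_{n+1}=1\}+(k-P_n)\chi\{X_{n+1}=-1\}.
\end{align*}
Taking conditional expectations and comparing with \eqref{H_{1},H_{2},H_{3}_concise_expressions} gives $\E[(A,B,C)_{n+1}-(A,B,C)_n\mid\mathcal{F}_n]=H(Z_n)$.

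Writing $Z_{n+1}=Z_n+\frac{1}{n+1}\bigl[(A,B,C)_{n+1}-(A,B,C)_n-Z_n\bigr]$ then yields the recursion
\[
Z_{n+1}=Z_n+\frac{1}{n+1}\bigl(h(Z_n)+M_{n+1}\bigr),
\]
where $M_{n+1}$ is the centred increment, a martingale difference.

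\textbf{Step 2: hypotheses of the stochastic approximation theorem.} Next we verify the hypotheses one by one.
\begin{itemize}
\item[(i)] \emph{Boundedness.} The iterates stay in $\mathcal{S}$ for $n\geqslant N$, because $A_n\leqslant n$, $B_n\leqslant C_n\leqslant k(n-N)$, and $C_n\geqslant B_n$. This gives almost sure boundedness for free, so no stability argument is needed.
\item[(ii)] \emph{Noise.} The increments are bounded by $k+1$, so $\E[\|M_{n+1}\|^2\mid\mathcal{F}_n]\leqslant K$. Since $\sum(n+1)^{-2}<\infty$, the noise condition holds.
\item[(iii)] \emph{Step sizes.} We have $\sum 1/(n+1)=\infty$ and $\sum 1/(n+1)^2<\infty$.
\item[(iv)] \emph{Lipschitz mean field.} We must show $h$ is Lipschitz on $\mathcal{S}$. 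Since $H_i$ is a polynomial in $q_\beta$, it suffices that $q_\beta$ is Lipschitz on $\mathcal{S}$. This holds because its denominator satisfies $c+\beta\geqslant\beta>0$ and its numerator is linear. Only measurability of $f$ is needed, since $f$ enters solely through the finitely many values $g(i/k)$.
\end{itemize}
If needed, $h$ is extended to a Lipschitz function on $\mathbb{R}^3$, for instance by composing with a projection onto the convex set $\mathcal{S}$.

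\textbf{Step 3: conclusion and initial condition.} Applying the theorem yields that the limit set of $Z_n$ is almost surely a compact connected internally chain transitive invariant set of $\dot z=h(z)$. It lies in $\mathcal{S}$ because $\mathcal{S}$ is closed and contains all $Z_n$; $\mathcal{S}$ is moreover forward invariant, since $H(\mathcal{S})\subseteq\mathcal{S}$ and $\mathcal{S}$ is convex.

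The main subtlety is the initial condition and the early regime. At $n=N$ we have $C_N=0$, and the sampling probability is still well defined because $\beta>0$. Since the theorem concerns only the asymptotic limit set, the initial value $(A_N/N,0,0)$ plays only a notational role. A minor obstacle is the formal requirement in some versions of the theorem that $h$ be globally Lipschitz on $\mathbb{R}^3$, which is handled by the projection extension in Step 2(iv). I expect no genuine difficulty beyond carefully checking the identity $\E[\Delta C_n\mid\mathcal{F}_n]=H_3(Z_n)$.
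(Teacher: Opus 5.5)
Your proposal is correct and follows essentially the same route as the paper. You write $Z_n$ as a stochastic approximation with step sizes $1/(n+1)$, drift $h=H-\mathrm{id}$ and bounded martingale-difference noise, get Lipschitzness of $h$ from $q_\beta$ (denominator $\geqslant\beta$), get boundedness for free from compactness of $\mathcal{S}$, and conclude via Corollary~\ref{cor:borkar_a.s.}; your extension $h\circ\pi_{\mathcal{S}}$ to all of $\mathbb{R}^3$ is a tidy way to meet the global-Lipschitz hypothesis, which the paper handles via Kirszbraun's theorem in Lemma~\ref{lem:positively_invariant_domain}.
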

In order to state the next theorem, we introduce a few definitions. For a random variable $Y$ following Binomial$(k,q)$, for any $q\in[0,1]$, let us define the quantities 
\begin{equation}\label{F_{1},F_{2}_defns}
F_{1}(q)=\E[g(Y/k)] \text{ and } F_{2}(q)=\E[Yg(Y/k)], \text{ with }g\text{ as in \eqref{g_defn}.} 
\end{equation}
For each $q\in[0,1]$, let us define
\begin{equation}\label{G_defn}
G(q)=(1-2q)F_{2}(q)+(\beta+kq)F_{1}(q)-kq(1-q)-\beta q.
\end{equation}
Right away, we see that $F_{1}(0)=g(0)$ and $F_{2}(0)=0$, whereas $F_{1}(1)=g(1)$ and $F_{2}(1)=kg(1)$, so that $G(0)=\beta g(0)$ and $G(1)=-\beta\{1-g(1)\}$. Thus, if we assume that $g(0)>0$ and $g(1)<1$, then $G(0)>0$ and $G(1)<0$, so that the function $G$ must have at least one root in the interval $(0,1)$.  
\begin{theorem}\label{thm:main_2}
Assume $g(0)>0$ and $g(1)<1$. Assume, also, that the function $G$, defined in \eqref{G_defn}, has a unique root, denoted $q^{*}$, in $(0,1)$, and define the set $\Lambda=\{(a,b,c)\in\mathcal{S}:q_{\beta}(a,b,c)=q^{*}\}$, where $\mathcal{S}$ is as in \eqref{domain_defn}, and the function $q_{\beta}$ is as defined in \eqref{q_defn}. Then, for the model described in \S\ref{subsec:model_fixed_sample_size}, the sequence $\{(n^{-1}A_{n},n^{-1}B_{n},n^{-1}C_{n})\}$ converges almost surely to a compact, connected, internally chain transitive invariant set, contained in the set $\Lambda$, corresponding to the autonomous ODE given by \eqref{ODE_fixed_sample_size}.
\end{theorem}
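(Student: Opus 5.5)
By Theorem~\ref{thm:main_1}, the sequence $\{(n^{-1}A_{n},n^{-1}B_{n},n^{-1}C_{n})\}$ already converges almost surely to a compact, connected, internally chain transitive invariant set $L\subseteq\mathcal{S}$ of the ODE \eqref{ODE_fixed_sample_size}. It therefore suffices to show that every such set $L$ lies in $\Lambda$. My plan is to build a strict Lyapunov function for $\Lambda$ and then apply the standard result from \S\ref{sec:results_from_literature} (following \cite{benaim2005stochastic}). That result says: if $V$ is continuous on the state space, strictly decreasing along trajectories outside $\Lambda$, non-increasing on $\Lambda$, and $V(\Lambda)$ has empty interior, then every internally chain transitive invariant set is contained in $\Lambda$.

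\textbf{Step 1: an autonomous scalar equation for $q$.} The key observation is that $q(t)=q_{\beta}(a(t),b(t),c(t))$ satisfies a one-dimensional equation. Differentiating $q=(b+\beta a)/(c+\beta)$ gives
\[
\dot q=\frac{h_{2}+\beta h_{1}-q\,h_{3}}{c+\beta}.
\]
Since $b+\beta a=q(c+\beta)$, the terms in the numerator that do not involve $H$ combine to $-\beta q$, so the numerator equals $H_{2}+\beta H_{1}-qH_{3}-\beta q$.

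Now use \eqref{H_{1},H_{2},H_{3}_concise_expressions}, with $\E[Y]=kq$. This gives $H_{1}=F_{1}(q)$ and $H_{2}=F_{2}(q)$. For the third component,
\[
H_{3}=\E[Yg+(k-Y)(1-g)]=2F_{2}(q)+k-kq-kF_{1}(q).
\]
Substituting these, the numerator becomes
\[
(1-2q)F_{2}(q)+(\beta+kq)F_{1}(q)-kq(1-q)-\beta q=G(q).
\]
Hence, along every trajectory in $\mathcal{S}$,
\[
\dot q(t)=\frac{G(q(t))}{c(t)+\beta}, \qquad c(t)+\beta\in[\beta,k+\beta].
\]
Because $F_{1}$ and $F_{2}$ are polynomials in $q$ (they are finite binomial sums), $G$ is a polynomial. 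So no regularity of $f$ is needed here.

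\textbf{Step 2: sign of $G$.} We have $G(0)=\beta g(0)>0$ and $G(1)=-\beta\{1-g(1)\}<0$, and $q^{*}$ is the unique root of $G$ in $(0,1)$. By continuity and the intermediate value theorem, $G>0$ on $[0,q^{*})$ and $G<0$ on $(q^{*},1]$.

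\textbf{Step 3: the Lyapunov function.} Set $V(a,b,c)=(q_{\beta}(a,b,c)-q^{*})^{2}$. This is continuous on $\mathcal{S}$, because the denominator $c+\beta\geqslant\beta>0$. Along trajectories,
\[
\dot V=\frac{2(q-q^{*})\,G(q)}{c+\beta}.
\]
By Step 2 this is strictly negative whenever $q\neq q^{*}$, that is, off $\Lambda$, and it equals $0$ on $\Lambda$. Moreover $V(\Lambda)=\{0\}$, which has empty interior in $\mathbb{R}$. Invoking the cited Lyapunov-function result then yields $L\subseteq\Lambda$, which proves the theorem.

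\textbf{Main obstacle.} The one step needing care is the algebraic identity in Step 1: checking that $h_{2}+\beta h_{1}-q h_{3}$ collapses exactly to $G(q)$. The definition of $G$ in \eqref{G_defn} is clearly designed for this, so I expect the computation to go through.

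A secondary point is verifying the hypotheses of the quoted result in the form stated in \S\ref{sec:results_from_literature}. This means checking that $\mathcal{S}$ is forward invariant, which holds because $H$ maps $\mathcal{S}$ into itself, so $h$ points inward on the convex set $\mathcal{S}$. It also means checking that $V$ need only be defined and continuous on $\mathcal{S}$ rather than on all of $\mathbb{R}^{3}$; if the cited result requires a globally defined $V$, any continuous extension off $\mathcal{S}$ will do, since $L\subseteq\mathcal{S}$.
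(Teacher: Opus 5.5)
Your proposal is correct and follows essentially the same route as the paper, which derives the same scalar identity $\dot q_{\beta}=G(q_{\beta})/(c+\beta)$ and uses the Lyapunov function $V=-\int_{q^{*}}^{q_{\beta}}G(s)\,ds$ (playing the same role as your $(q_{\beta}-q^{*})^{2}$); no ready-made Lyapunov proposition is stated in \S\ref{sec:results_from_literature}, so instead of citing one the paper reproves it inline via the sublevel sets $\{V<\theta_{n}\}\cap L$ and Propositions 3.19--3.20 of \cite{benaim2005stochastic}. That inline argument also handles your domain concern more cleanly than an arbitrary continuous extension of $V$, which would not literally satisfy a global Lyapunov hypothesis off $\mathcal{S}$, because it only ever uses $V$ and the flow on $L\subseteq\mathcal{S}$.
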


The next result is an illustration of various naturally arising examples of the reinforcement function $f$ for which the function $G$ has a unique root in $[0,1]$:
\begin{prop}\label{prop:examples_g_unique_root_of_G}
Recall the function $f$ defined via \eqref{X_{n+1}_conditional_distribution}, and the function $g$ as defined in \eqref{g_defn}. Assume, throughout, that $p\in(0,1)\setminus\{1/2\}$. Then, in each of the following cases, the function $G$, defined in \eqref{G_defn}, has a unique root in $[0,1]$ (lying strictly between $0$ and $1$):
\begin{enumerate}
\item when $f(x)=c$ for all $x\in[0,1]$, for some $c\in(0,1)$;
\item when $f(x)=x$ for all $x\in[0,1]$, and $p\in(0,1/2)$;
\item when $f(x)=x$ for all $x\in[0,1]$, $p\in(1/2,1)$ and either
\begin{equation}\label{beta_p_cond}
\beta\geqslant \frac{(2p-1)(k-1)}{4(1-p)},
\end{equation}
or \eqref{beta_p_cond} does not hold and one of the following two inequalities is true:
\begin{align}
{}&G\left(\frac{1}{2}-\frac{\sqrt{3(k-1)(2p-1)\{(k-1)(2p-1)-4\beta(1-p)\}}}{6(k-1)(2p-1)}\right)>0,\label{beta_{1}_cond}\\
{}&G\left(\frac{1}{2}+\frac{\sqrt{3(k-1)(2p-1)\{(k-1)(2p-1)-4\beta(1-p)\}}}{6(k-1)(2p-1)}\right)<0;\label{beta_{2}_cond}
\end{align}
\item \label{exponential} when $f(x)=e^{x-1}$ for all $x\in[0,1]$, $k\geqslant 2$, $p\in(0,1/2)$ and $\beta>k(1-2p)$. 
\end{enumerate}
\end{prop}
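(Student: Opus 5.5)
The plan is to put $G$ in closed form in each case and then count its roots on $[0,1]$ by elementary means. We always start from
\[
g(x)=(1-p)+(2p-1)f(x),
\]
and write $r=2p-1$. The computation of $F_{1},F_{2}$ in \eqref{F_{1},F_{2}_defns} uses only the Binomial moments $\E[Y]=kq$ and $\E[Y^{2}]=kq(1-q)+k^{2}q^{2}$, and, for exponential $f$, the generating function $\E[e^{tY}]$. Throughout, $G(0)=\beta g(0)>0$ and $G(1)=-\beta\{1-g(1)\}<0$, as noted before Theorem~\ref{thm:main_2}. Hence there is always at least one root in $(0,1)$, and neither $0$ nor $1$ is a root, so only uniqueness has to be proved.

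\emph{Case (1).} Here $g\equiv\gamma:=(1-p)+rc\in(0,1)$, so $F_{1}=\gamma$ and $F_{2}=\gamma kq$. Substituting gives
\[
G(q)=k(1-2\gamma)q^{2}+(2\gamma k-k-\beta)q+\beta\gamma,
\]
a polynomial of degree at most $2$. Since $G(0)>0>G(1)$, $G$ has an odd number of roots in $(0,1)$. A polynomial of degree at most $2$ has at most two roots, so there is exactly one.

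\emph{Cases (2) and (3).} Take $f(x)=x$ and put $s=r(k-1)$. Then
\[
F_{1}=(1-p)+rq,\qquad F_{2}=(1-p)kq+rq+sq^{2}.
\]
Expanding and collecting powers of $q$, I expect (and a quick check confirms) the factorisation
\[
G(q)=(1-2q)\bigl\{\beta(1-p)-s\,q(1-q)\bigr\}.
\]
So $q=1/2$ is always a root, and any other root satisfies $s\,q(1-q)=\beta(1-p)$.
\begin{enumerate*}
\item If $p<1/2$, then $s\leqslant0$, the bracket is strictly positive, and $1/2$ is the unique root.
\item If $p>1/2$, then $s>0$. Because $q(1-q)\leqslant1/4$ with equality only at $q=1/2$, condition \eqref{beta_p_cond}, i.e.\ $\beta(1-p)\geqslant s/4$, rules out any root other than $1/2$.
\end{enumerate*}
For the alternative conditions \eqref{beta_{1}_cond}--\eqref{beta_{2}_cond}, note that $G$ is a cubic with negative leading coefficient $-2s$. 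Solving $G'(q)=0$ gives exactly the two points $q_{\pm}=\tfrac12\pm\sqrt{3s(s-4\beta(1-p))}/(6s)$ appearing there: $q_{-}$ is the local minimum and $q_{+}$ the local maximum.
\begin{enumerate*}
\item If $G(q_{-})>0$, then $G>0$ on $(-\infty,q_{+}]$, and $G$ is strictly decreasing on $[q_{+},\infty)$, so it has exactly one root.
\item Symmetrically, if $G(q_{+})<0$, there is exactly one root.
\end{enumerate*}
This is the standard one-real-root criterion for cubics. (From the factorisation one can also check that these alternatives never actually occur when \eqref{beta_p_cond} fails, but that is not needed for the implication.)

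\emph{Case (4).} Here $f(x)=e^{x-1}$ and $r<0$. Let $m(q)=1-q+qe^{1/k}$. The generating function gives
\[
F_{1}=(1-p)+re^{-1}m(q)^{k},\qquad F_{2}=(1-p)kq+re^{-1}kqe^{1/k}m(q)^{k-1}.
\]
The plan is to show that $G$ is strictly decreasing on $[0,1]$, which gives uniqueness. Split
\[
G(q)=\beta\{F_{1}(q)-q\}+\varphi(q),\qquad \varphi(q)=(1-2q)F_{2}+kqF_{1}-kq(1-q).
\]
Since $g$ is decreasing, $F_{1}$ is nonincreasing in $q$: a Binomial$(k,q)$ variable increases stochastically in $q$. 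Hence the first term has derivative at most $-\beta$.

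In $\varphi$, the constant part $1-p$ of $g$ contributes $k(1-2p)q(1-q)$, whose derivative has absolute value at most $k(1-2p)$. The remaining part is
\[
re^{-1}\bigl[(1-2q)\E[Ye^{Y/k}]+kq\,\E[e^{Y/k}]\bigr].
\]
The key step is to show its derivative is $\leqslant0$, or at least small enough that its sum with the previous bound stays below $\beta$. Then the hypothesis $\beta>k(1-2p)$ yields $G'<0$ throughout.

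I expect this derivative estimate in case (4) to be the main obstacle. I would first try to prove that $q\mapsto(1-2q)\E[Ye^{Y/k}]+kq\,\E[e^{Y/k}]$ is nondecreasing, so that multiplying by $r<0$ gives a nonpositive derivative. This would use the explicit form in terms of $m(q)$ together with the bounds $1\leqslant e^{1/k}\leqslant e$ and $k\geqslant2$. If that fails, I would bound the derivative's absolute value by something like $|r|k(e^{1/k}-1)m^{k-1}$ and try to absorb it into the slack available in $\beta$.
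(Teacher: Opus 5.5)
Your cases (1)--(3) are correct. The factorisation $G(q)=(1-2q)\{\beta(1-p)-s\,q(1-q)\}$ checks out and is cleaner than the paper's sign analysis of $G'$. It also shows that whenever \eqref{beta_p_cond} fails, $G$ has three roots in $(0,1)$, so \eqref{beta_{1}_cond} and \eqref{beta_{2}_cond} can never hold in that case. (For $k=1$ you have $s=0$, not $s>0$, but then the bracket equals $\beta(1-p)>0$ and nothing changes.)

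\textbf{Case (4) has a genuine gap.} The monotonicity you plan to prove first is false. Take $\psi(q)=(1-2q)\E[Ye^{Y/k}]+kq\,\E[e^{Y/k}]$. Then $\psi(0)=\psi(1)=0<\psi(1/2)$. In fact, from $\E[Ye^{Y/k}]=kqe^{1/k}m^{k-1}$ and $(1-2q)e^{1/k}+m=(1+e^{1/k})(1-q)$, you get
\[
\psi(q)=k\bigl(1+e^{1/k}\bigr)\,q(1-q)\,m(q)^{k-1}.
\]
So $(2p-1)e^{-1}\psi$ is increasing near $q=1$, with derivative $(1-2p)k(1+e^{-1/k})$ at $q=1$. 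Your fallback cannot absorb this positive term, because you bounded the derivative of $k(1-2p)q(1-q)$ by $k(1-2p)$ in absolute value. The only remaining room is then:
\begin{enumerate*}
\item $\beta-k(1-2p)$, which the hypothesis allows to be arbitrarily small, and
\item $\beta|F_{1}'|$, which for $\beta$ near $k(1-2p)$ is at most about $k^{2}(1-2p)^{2}(1-e^{-1/k})$.
\end{enumerate*}
The latter is always smaller than $(1-2p)k(1+e^{-1/k})$, since $k(1-e^{-1/k})<1$. Your tentative bound $|r|k(e^{1/k}-1)m^{k-1}$ is also not a valid bound for that derivative at $q=1$.

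\textbf{The missing idea} is to keep the sign of $1-2q$ in the constant part and play it against the $(2p-1)$-part, instead of bounding the two separately. Since
\[
\psi'(q)=k\bigl(1+e^{1/k}\bigr)\bigl\{(1-2q)m^{k-1}+(k-1)\bigl(e^{1/k}-1\bigr)q(1-q)m^{k-2}\bigr\}\geqslant k\bigl(1+e^{1/k}\bigr)(1-2q)m^{k-1},
\]
and $m\leqslant e^{1/k}$, the derivative of $(2p-1)e^{-1}\psi$ is $\leqslant 0$ on $[0,1/2]$. On $(1/2,1]$ it is $\leqslant (1-2p)k(1+e^{-1/k})(2q-1)$. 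Adding $k(1-2p)(1-2q)$ gives
\[
\varphi'(q)\leqslant k(1-2p)\max\{1-2q,\,e^{-1/k}(2q-1)\}\leqslant k(1-2p),
\]
hence $G'(q)\leqslant-\beta+k(1-2p)<0$ on $[0,1]$, as you wanted. This is considerably shorter than the paper's argument. The paper expands $G'=B+C+D$, bounds $C$ via $e^{1/k}-1>1/k$, and then splits $[0,1]$ at the root $\gamma$ of $1-q-q^{2}e^{1/k}$ with a convexity argument. The route above also does not need $k\geqslant 2$.
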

When $f(x)=e^{x-1}$ for $x\in[0,1]$, in addition to \eqref{exponential} which provides sufficient conditions for $G$ to have a unique root for $p\in(0,1/2)$, we include, in Figure~\ref{fig}, plots that demonstrate that the same conclusion is true for plenty of values of $k$, $\beta$ and $p\in(1/2,1)$. 

Our next result concerns itself with how the stochastic process that is obtained via linear interpolation from the iterates $(n^{-1}A_{n},n^{-1}B_{n},n^{-1}C_{n})$ (arising out of the model described in \S\ref{subsec:model_fixed_sample_size}) deviates from the trajectory driven by the autonomous ODE mentioned in the statement of Theorem~\ref{thm:main_1}:
\begin{theorem}\label{thm:main_6}
Consider the model described in \S\ref{subsec:model_fixed_sample_size}, and let us define $Z_{n}=(n^{-1}A_{n},n^{-1}B_{n},n^{-1}C_{n})$ for all $n\in\mathbb{N}$ with $n\geqslant N$. Fix \emph{any} $T>0$. Writing $t_{N}=0$ and $t_{n}=\sum_{N+1\leqslant i\leqslant n}i^{-1}$ for $n\geqslant N+1$, we set $m_{n}=\min\{m\geqslant n: t_{m}\geqslant t_{n}+T\}$ for each $n\in\mathbb{N}$. Recall the function $h$ as defined in \eqref{h_defn_fixed_size}. For each fixed $n\in\mathbb{N}$ with $n\geqslant N+1$, let $\{Z^{n}(t): t\geqslant t_{n}\}$, with $Z^{n}(t)=(a^{n}(t),b^{n}(t),c^{n}(t))$ for each $t\geqslant t_{n}$, indicate the solution to
\begin{equation}
\left(\dot{a}(t),\dot{b}(t),\dot{c}(t)\right)=h(a(t),b(t),c(t)) \text{ for all }t\geqslant t_{n}, \text{ with } \left(a(t_{n}),b(t_{n}),c(t_{n})\right)=\left(n^{-1}A_{n},n^{-1}B_{n},n^{-1}C_{n}\right),\nonumber
\end{equation}
and let 
\begin{equation}
W_{j}=(j+1)^{-1/2}\left\{\left(j^{-1}A_{j},j^{-1}B_{j},j^{-1}C_{j}\right)-\left(a^{n}(t_{j}),b^{n}(t_{j}),c^{n}(t_{j})\right)\right\} \text{ for each }j\geqslant n.\nonumber
\end{equation}
We now define the process $\{W^{n}(t):t\in [t_{n},t_{n}+T]\}$ by linear interpolation, as follows: 
\begin{equation}
W^{n}(t)=W_{j}+\frac{t-t_{j}}{t_{j+1}-t_{j}}(W_{j+1}-W_{j}) \text{ for }t\in[t_{j},t_{j+1}], \text{ for each }j\in\{n,n+1,\ldots,m_{n}-1\}.\nonumber  
\end{equation}  
Finally, we set $\tilde{W}^{n}(t)=W^{n}(t_{n}+t)$ for all $t\in[0,T]$. Likewise, we define $\tilde{Z}^{n}(t)=Z^{n}(t_{n}+t)$ for all $t\in[0,T]$. Then the laws of the processes $\{(\tilde{Z}^{n}(\cdot),\tilde{W}^{n}(\cdot))\}_{n\in\mathbb{N}}$ are relatively compact in the space $\mathcal{P}(C([0,T];\mathbb{R}^{3}))^{2}$, where $C([0,T];\mathbb{R}^{3})$ is the set of all continuous functions from $[0,T]$ to $\mathbb{R}^{3}$ and $\mathcal{P}(C([0,T];\mathbb{R}^{3}))^{2}$ is the space of all probability measures on $C([0,T];\mathbb{R}^{3})\times C([0,T];\mathbb{R}^{3})$.

If a subsequence of $\{(\tilde{Z}^{n}(\cdot),\tilde{W}^{n}(\cdot))\}_{n\in\mathbb{N}}$ converges in law to a limiting process $\{z^{*}(\cdot),w^{*}(\cdot)\}$, then $z^{*}(\cdot)$ belongs to a compact, connected, internally chain transitive invariant set corresponding to the ODE described by \eqref{ODE_fixed_sample_size}, and, letting $J_{h}$ denote the Jacobian of $h$, each of
\begin{multline}
w^{*}(t)-\int_{0}^{t}\left\{J_{h}\left(z^{*}(s)\right)+\frac{1}{2}I_{3}\right\}w^{*}(s)ds \quad \text{and}\quad \left[w^{*}(t)-\int_{0}^{t}\left\{J_{h}\left(z^{*}(s)\right)+\frac{1}{2}I_{3}\right\}w^{*}(s)ds\right]\\\left[w^{*}(t)-\int_{0}^{t}\left\{J_{h}\left(z^{*}(s)\right)+\frac{1}{2}I_{3}\right\}w^{*}(s)ds\right]^{T}-\int_{0}^{t}Q\left(z^{*}(s)\right)ds,\nonumber
\end{multline}
is a martingale for all $t\in[0,T]$, where $Q:\mathcal{S}\rightarrow\mathbb{R}^{3\times 3}$ is a matrix-valued function with entries given by
\begin{equation}\label{Q_defn}
\begin{cases}
&Q_{1,1}(a,b,c)=H_{1}(a,b,c)\{1-H_{1}(a,b,c)\},\\
&Q_{1,2}(a,b,c)=Q_{2,1}(a,b,c)=H_{2}(a,b,c)\{1-H_{1}(a,b,c)\},\\
&Q_{2,2}(a,b,c)=H_{4}(a,b,c)-H_{2}^{2}(a,b,c),\\
&Q_{1,3}(a,b,c)=Q_{3,1}(a,b,c)=H_{2}(a,b,c)-H_{1}(a,b,c)H_{3}(a,b,c),\\
&Q_{2,3}(a,b,c)=Q_{3,2}(a,b,c)=H_{4}(a,b,c)-H_{2}(a,b,c)H_{3}(a,b,c),\\
&Q_{3,3}(a,b,c)=kq_{\beta}(a,b,c)\{1-q_{\beta}(a,b,c)\}+k^{2}\{1-q_{\beta}(a,b,c)\}^{2}-k^{2}H_{1}(a,b,c)\\&+2k H_{2}(a,b,c)-H_{3}^{2}(a,b,c),
\end{cases}
\end{equation}
with $H_{1}$, $H_{2}$ and $H_{3}$ as defined in \eqref{H_{1}_defn}, \eqref{H_{2}_defn} and \eqref{H_{3}_defn}, and $H_{4}:\mathcal{S}\rightarrow[0,k^{2}]$ defined as
\begin{equation}
H_{4}(a,b,c)=\sum_{i\in[k]}i^{2}g\left(\frac{i}{k}\right){k\choose i}q_{\beta}(a,b,c)^{i}\{1-q_{\beta}(a,b,c)\}^{k-i} \text{ for } (a,b,c)\in\mathcal{S},\label{H_{4}_defn}
\end{equation}
where $\mathcal{S}$ is as defined in \eqref{domain_defn} and $q_{\beta}(\cdot,\cdot,\cdot)$ is as defined in \eqref{q_defn}.
\end{theorem}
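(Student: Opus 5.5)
This is a functional central limit theorem for a stochastic approximation scheme, so the plan is to cast the iterates $Z_n=(n^{-1}A_n,n^{-1}B_n,n^{-1}C_n)$ in the standard form and invoke the functional CLT for stochastic approximation in \cite{borkar2008stochastic}. That result is stated in \S\ref{sec:results_from_literature} for step sizes $a(n)=1/(n+1)$ and fluctuations rescaled by $a(n)^{-1/2}$.

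\textbf{Step 1: the recursion and the conditional mean.} Since $A_{n+1}=A_n+\chi\{X_{n+1}=+1\}$, $B_{n+1}$ and $C_{n+1}$ increase by $P_n\chi\{X_{n+1}=1\}$ and $P_n\chi\{X_{n+1}=1\}+(k-P_n)\chi\{X_{n+1}=-1\}$ respectively. Hence
\[
Z_{n+1}=Z_n+\tfrac{1}{n+1}\left\{h(Z_n)+M_{n+1}+\varepsilon_n\right\}.
\]
To obtain the drift, note that conditionally on $\mathcal{F}_n$ the sampling probability of a $+1$ vertex is $(B_n+\beta A_n)/(C_n+n\beta)=q_\beta(Z_n)$. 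So $P_n\sim$ Binomial$(k,q_\beta(Z_n))$, and the conditional means of the three increments are exactly $H_1,H_2,H_3$ evaluated at $Z_n$. The martingale difference is the centred increment $M_{n+1}$, and $\varepsilon_n=0$ here, because $q_\beta$ is exact. The boundedness of the increments gives $\sup_n\E[\|M_{n+1}\|^{4}\mid\mathcal{F}_n]<\infty$. Also, $h$ is a polynomial in $q_\beta$, and $q_\beta$ is smooth on $\mathcal{S}$ (its denominator is at least $\beta>0$), so $h$ is $C^1$ with bounded Jacobian $J_h$ on the compact set $\mathcal{S}$.

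\textbf{Step 2: the conditional covariance.} Next I compute $\E[M_{n+1}M_{n+1}^T\mid\mathcal{F}_n]=Q(Z_n)$. Write $\xi=\chi\{X_{n+1}=1\}$ and $Y=P_n$. Then $\E\xi^2=H_1$, $\E[\xi\cdot Y\xi]=H_2$ and $\E[(Y\xi)^2]=H_4$, which give $Q_{11},Q_{12},Q_{22}$. For the third coordinate, the increment is $Y\xi+(k-Y)(1-\xi)=k-Y+(2Y-k)\xi$, and $Q_{13},Q_{23},Q_{33}$ follow by expanding its products with $\xi$ and $Y\xi$. In particular, $\E[(k-Y)^2]=kq(1-q)+k^2(1-q)^2$, and the cross terms reduce to $-k^2H_1+2kH_2$ after using $\xi^2=\xi$ together with the cancellation of the $Y^2\xi$ terms. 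Since $Q$ is a continuous function of $Z_n$, the covariance condition of the CLT holds.

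\textbf{Step 3: applying the cited theorem.} Theorem~\ref{thm:main_1} gives almost sure convergence to an internally chain transitive invariant set, and the iterates are bounded because they lie in $\mathcal{S}$. The cited result then gives tightness of the interpolated pairs $(\tilde Z^n,\tilde W^n)$ and the martingale characterization of the limits. The drift there is $J_h(z^*)+\tfrac12 I$ in the case $a(n)=1/n$; the $\tfrac12 I$ comes from the ratio $\sqrt{a(n)/a(n+1)}$. The limit $z^*$ lies in the invariant set by Theorem~\ref{thm:main_1}.

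\textbf{Expected obstacle.} The main obstacle I anticipate is matching the exact normalization to the cited theorem. Two points need care. First, the rescaling $(j+1)^{-1/2}$ in $W_j$: I would first recheck whether it must be read as $(j+1)^{1/2}$ to match Borkar's scaling. Second, the deviation must be taken against $Z^n(t_j)$ with time $t_n=\sum 1/i$ rather than $\sum 1/(i+1)$; this produces an $O(1/n)$ perturbation in the step sizes, which I would absorb into $\varepsilon_n$ and show vanishes after the $\sqrt n$ scaling. The remaining verifications, including Lipschitz continuity of $J_h$ via smoothness on $\mathcal{S}$, are routine.
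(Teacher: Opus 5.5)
Your proposal is correct and follows the paper's proof essentially step for step. The paper likewise writes the iterates as the stochastic approximation \eqref{sa_fixed}, with exact drift $h$ and step sizes $(n+1)^{-1}$. It verifies the hypotheses of Theorem~\ref{thm:borkar_fclt} using smoothness of $h$ on the compact set $\mathcal{S}$, $\alpha=1$ and bounded increments, and computes $\E[\Delta M_{n+1}\Delta M_{n+1}^{T}|\mathcal{F}_{n}]=Q(Z_{n})$ exactly as you do. It also cites Lemma~\ref{lem:positively_invariant_domain} so that the trajectories $Z^{n}(\cdot)$ stay in $\mathcal{S}$, where $h$, $J_{h}$ and $Q$ are defined; you leave this step implicit.

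Of your two flagged concerns, the first is justified and the second is not an issue:
\begin{enumerate}
\item Theorem~\ref{thm:borkar_fclt} uses the scaling $a_{j+1}^{-1/2}=(j+1)^{1/2}$. With the literal $(j+1)^{-1/2}$ one gets $w^{*}\equiv 0$, and then the second martingale assertion would force $\int_{0}^{t}Q(z^{*}(s))ds\equiv 0$, which fails in general.
\item The increment $t_{n+1}-t_{n}=(n+1)^{-1}$ is exactly the step size in \eqref{sa_fixed}. So no perturbation is needed, and you can keep $\varepsilon_{n}=0$, as Theorem~\ref{thm:borkar_fclt} requires.
\end{enumerate}
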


\subsection{Main results pertaining to the model described in \S\ref{subsec:model_growing_sample_size}}\label{subsec:main_results_growing_sample_size} When it comes to the model described in \S\ref{subsec:model_growing_sample_size}, we define the compact set
\begin{equation}
\hat{\mathcal{S}}=\left\{(a,b,c)\in[0,1]^{3}:c\geqslant b\right\},\label{domain_defn_growing_size}
\end{equation}
and we define the function $\hat{H}=(\hat{H}_{1},\hat{H}_{2},\hat{H}_{3}):\hat{\mathcal{S}}\rightarrow\hat{\mathcal{S}}$ as follows, with $q_{\alpha}$ as defined in \eqref{q_defn} (but this time, we need only be concerned with $q_{\alpha}$ defined on $\hat{\mathcal{S}}$, which is a subset of $\mathcal{S}$ defined in \eqref{domain_defn}):
\begin{align}
\hat{H}_{1}(a,b,c)={}&g\left(q_{\alpha}(a,b,c)\right),\label{hat{H}_{1}_defn}\\
\hat{H}_{2}(a,b,c)={}&q_{\alpha}(a,b,c)g\left(q_{\alpha}(a,b,c)\right),\label{hat{H}_{2}_defn}\\
\hat{H}_{3}(a,b,c)={}&q_{\alpha}(a,b,c)g\left(q_{\alpha}(a,b,c)\right)+\left\{1-q_{\alpha}(a,b,c)\right\}\left\{1-g\left(q_{\alpha}(a,b,c)\right)\right\}.\label{hat{H}_{3}_defn}
\end{align}
That $\hat{H}$ indeed maps $\hat{\mathcal{S}}$ to itself follows from the fact that the function $g$, defined in \eqref{g_defn}, takes values in $[0,1]$, and the fact that $q_{\alpha}(a,b,c)\in[0,1]$ for each $(a,b,c)\in\hat{\mathcal{S}}$. We also define
\begin{equation}\label{hat{h}_defn_growing_size}
\hat{h}\left(\begin{bmatrix}
a\\
b\\
c
\end{bmatrix}\right)=\begin{bmatrix}
\hat{h}_{1}(a,b,c)\\
\hat{h}_{2}(a,b,c)\\
\hat{h}_{3}(a,b,c)
\end{bmatrix}=\begin{bmatrix}
\hat{H}_{1}(a,b,c)\\
\hat{H}_{2}(a,b,c)\\
\hat{H}_{3}(a,b,c)
\end{bmatrix}-\begin{bmatrix}
a\\
b\\
c
\end{bmatrix} \quad \text{for each }(a,b,c)\in\hat{\mathcal{S}}.
\end{equation}

Before stating Theorem~\ref{thm:main_3}, we recall that, given any open subset $S$ of $\mathbb{R}^{d}$ for any $d\in\mathbb{N}$, and any $i\in\mathbb{N}$, we let $\mathcal{C}^{(i)}(S)$ indicate the set of all functions $f:S\rightarrow\mathbb{R}$ such that the $i$-th order partial derivatives of $f$ (which are of the form $\partial^{j_{1}}/\partial x_{1}^{j_{1}}\cdots\partial^{j_{d}}/\partial x_{d}^{j_{d}}f(x_{1},\ldots,x_{d})$ for $j_{1},\ldots,j_{d}\in\mathbb{N}_{0}$ with $j_{1}+\cdots+j_{d}=i$) exist and are continuous throughout $S$. We let $\mathcal{C}^{(\infty)}(S)$ indicate the set of all $f:S\rightarrow\mathbb{R}$ such that the $i$-th order partial derivatives of $f$ exist throughout $S$ for each $i\in\mathbb{N}$ (evidently, for such an $f$, partial derivatives of every order must be continuous throughout $S$, as well). If $S'$ is a compact subset of $\mathbb{R}^{d}$, and $f:S'\rightarrow\mathbb{R}$, then by $f\in\mathcal{C}^{(i)}(S')$ we mean that there exists some open subset $S$ of $\mathbb{R}^{d}$ with $S'\subset S$, and some function $\tilde{f}:S\rightarrow\mathbb{R}$, such that $\tilde{f}|_{S'}\equiv f$ (i.e.\ $\tilde{f}$, when restricted to $S'$, is identical to $f$) and $\tilde{f}\in\mathcal{C}^{(i)}(S)$. 

Given any $S\subset\mathbb{R}^{d}$, we let $\mathcal{C}^{(0)}(S)$ indicate the set of all $f:S\rightarrow\mathbb{R}$ such that $f$ is continuous throughout $S$. In particular, when $S$ is a compact subset of $\mathbb{R}^{d}$, for $f\in\mathcal{C}^{(0)}(S)$ and $\delta>0$, we define $\omega_{f}(\delta)=\sup\{|f(\mathbf{x})-f(\mathbf{y})|:\mathbf{x},\mathbf{y}\in S,||\mathbf{x}-\mathbf{y}||<\delta\}$. Evidently, $\omega_{f}(\delta)$ approaches $0$ as $\delta\rightarrow0$, due to the continuity of $f$ on $S$.
\begin{theorem}\label{thm:main_3}
Recall the model described in \S\ref{subsec:model_growing_sample_size}, and assume that 
\begin{equation}\label{k_{n}_sequence_conditions}
\sum_{n\geqslant N}\frac{k_{n}}{s_{n+1}}\text{ diverges}, \sum_{n\geqslant N}\frac{k_{n}^{2}}{s_{n+1}^{2}}\text{ converges, and }0<\eta_{1}=\inf_{n\geqslant N}\frac{s_{n+1}}{(n+1)k_{n}}\leqslant\sup_{n\geqslant N}\frac{s_{n+1}}{(n+1)k_{n}}=\eta_{2}<\infty,
\end{equation}
where $s_{n}$ is as defined in \eqref{beta_{n}_s_{n}_defns}. Assume that one of the following is true:
\begin{enumerate}[label=(A\arabic*), ref=A\arabic*]
\item\label{A1} When $g$ is Lipschitz on $[0,1]$, each of the series $\sum_{n\geqslant N}(n+1)^{-1}k_{n}^{-1/2}$ and $\sum_{n\geqslant N}s_{n+1}^{-1}k_{n}^{1/2}$ converges.
\item\label{A2} When $g\in\mathcal{C}^{(1)}[0,1]$, each of the series 
\begin{equation}
\sum_{n\geqslant N}(n+1)^{-1}k_{n}^{-1/2}\omega_{g'}(k_{n}^{-1/2}),\ \sum_{n\geqslant N}s_{n+1}^{-1}k_{n}^{1/2}\omega_{\varphi'}(k_{n}^{-1/2}) \text{ and }\sum_{n\geqslant N}s_{n+1}^{-1}k_{n}^{1/2}\omega_{\psi'}(k_{n}^{-1/2})\label{C^{1}_convergence_criteria}
\end{equation}
converges, where $\varphi(x)=xg(x)$ and $\psi(x)=xg(x)+(1-x)\{1-g(x)\}$ for all $x\in[0,1]$. If each of $\sum_{n\geqslant N}s_{n+1}^{-1}$ and $\sum_{n\geqslant N}s_{n+1}^{-1}k_{n}^{1/2}\omega_{g'}(k_{n}^{-1/2})$ converges, then so do the second and third series of \eqref{C^{1}_convergence_criteria}.
\item\label{A3} When $g\in\mathcal{C}^{(2)}[0,1]$, each of the series $\sum_{n\geqslant N}(n+1)^{-1}k_{n}^{-1}$ and $\sum_{n\geqslant N}s_{n+1}^{-1}$ converges.
\end{enumerate}
Then the sequence $\{(n^{-1}A_{n},s_{n}^{-1}B_{n},s_{n}^{-1}C_{n})\}$ converges almost surely to a (possibly path dependent) compact, connected, internally chain transitive invariant set, contained in $\hat{\mathcal{S}}$, corresponding to the differential inclusion $(\dot{a}(t),\dot{b}(t),\dot{c}(t))\in F(a(t),b(t),c(t))$ for $t\geqslant 0$, where we define, using \eqref{hat{h}_defn_growing_size}: 
\begin{equation}\label{set_valued_function}
F\left(\begin{bmatrix}
a\\
b\\
c
\end{bmatrix}\right)=\left\{\begin{bmatrix}
\eta\hat{h}_{1}(a,b,c)\\
\hat{h}_{2}(a,b,c)\\
\hat{h}_{3}(a,b,c)
\end{bmatrix}:\eta\in[\eta_{1},\eta_{2}]\right\}, \text{ for each }\begin{bmatrix}
a\\
b\\
c
\end{bmatrix}\in\hat{\mathcal{S}}.
\end{equation}
\end{theorem}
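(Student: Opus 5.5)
We will recast $(n^{-1}A_{n},s_{n}^{-1}B_{n},s_{n}^{-1}C_{n})$ as a stochastic approximation scheme with step sizes $\gamma_{n}=k_{n}/s_{n+1}$. Then we will invoke the result of \cite{benaim2005stochastic} on stochastic approximations driven by differential inclusions (recalled in \S\ref{sec:results_from_literature}). The convergence and divergence conditions on $\sum\gamma_{n}$ and $\sum\gamma_{n}^{2}$ in \eqref{k_{n}_sequence_conditions} are exactly the step-size hypotheses of that result.

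We set $a_{n}=A_{n}/n$, $b_{n}=B_{n}/s_{n}$ and $c_{n}=C_{n}/s_{n}$. Writing $s_{n+1}=s_{n}+k_{n}$, we obtain the recursions
\begin{align}
b_{n+1}-b_{n}={}&\gamma_{n}\left\{\frac{B_{n+1}-B_{n}}{k_{n}}-b_{n}\right\},\nonumber\\
c_{n+1}-c_{n}={}&\gamma_{n}\left\{\frac{C_{n+1}-C_{n}}{k_{n}}-c_{n}\right\},\nonumber\\
a_{n+1}-a_{n}={}&\gamma_{n}\cdot\frac{s_{n+1}}{(n+1)k_{n}}\left\{\chi(X_{n+1}=1)-a_{n}\right\}.\nonumber
\end{align}
The factor $\eta^{(n)}=s_{n+1}/((n+1)k_{n})$ lies in $[\eta_{1},\eta_{2}]$, and this is why the limiting dynamics is the inclusion $F$ rather than an ODE.

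Next we compute conditional expectations. By \eqref{sampling_probability_proportional_to_indegree_growing_sample_size} and $n\beta_{n}=\alpha s_{n}$, each sampled vertex has opinion $+1$ with probability $(B_{n}+\beta_{n}A_{n})/(C_{n}+n\beta_{n})=q_{\alpha}(a_{n},b_{n},c_{n})=:q_{n}$. Hence $P_{n}\sim\mathrm{Bin}(k_{n},q_{n})$ conditionally, and
\begin{align}
\E[\chi(X_{n+1}=1)\mid\mathcal{F}_{n}]={}&\E[g(P_{n}/k_{n})],\nonumber\\
\E[(B_{n+1}-B_{n})/k_{n}\mid\mathcal{F}_{n}]={}&\E[\varphi(P_{n}/k_{n})],\nonumber\\
\E[(C_{n+1}-C_{n})/k_{n}\mid\mathcal{F}_{n}]={}&\E[\psi(P_{n}/k_{n})].\nonumber
\end{align}
Each increment thus decomposes as $\gamma_{n}\{y_{n}+\varepsilon_{n}+M_{n+1}\}$. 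Here $y_{n}\in F(a_{n},b_{n},c_{n})$ is obtained by replacing $\E[g(P_{n}/k_{n})]$ by $g(q_{n})$ (and similarly for $\varphi$ and $\psi$). The martingale difference $M_{n+1}$ is bounded uniformly, since all normalized increments lie in $[0,1]$. The bias $\varepsilon_{n}$ is the gap between the binomial expectation and its plug-in value.

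The martingale noise is then handled routinely: since $\sum\gamma_{n}^{2}<\infty$ and $M_{n+1}$ is bounded, $\sum\gamma_{n}M_{n+1}$ converges almost surely. Boundedness of the iterates is automatic, because they lie in $\hat{\mathcal{S}}$; one checks $b_{n}\leqslant c_{n}\leqslant 1$. Upper semicontinuity, convexity, compactness and nonemptiness of the values of $F$ follow from the continuity of $\hat{h}$, and $F$ has linear growth. Hence, by the Benaïm–Hofbauer–Sorin theorem, the limit set is almost surely internally chain transitive for $F$, provided $\gamma_{n}\varepsilon_{n}$ is asymptotically negligible.

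The main obstacle is the bias term. It suffices to show $\sum_{n}\gamma_{n}\|\varepsilon_{n}\|<\infty$, which is a stronger condition than $\varepsilon_{n}\to0$. We will bound the binomial error according to the regularity assumption, always using $\E|P_{n}/k_{n}-q_{n}|\leqslant k_{n}^{-1/2}$.
\begin{enumerate}
\item Under \eqref{A1}, Lipschitzness gives errors of order $k_{n}^{-1/2}$ for $g$, $\varphi$ and $\psi$. For the $a$-coordinate the weight is $\gamma_{n}\eta^{(n)}=(n+1)^{-1}$, and for the $b$- and $c$-coordinates it is $k_{n}/s_{n+1}$. The assumed series are exactly these sums.
\item Under \eqref{A2}, a first-order Taylor expansion with mean-value remainder cancels the linear term, since $\E[P_{n}/k_{n}-q_{n}]=0$. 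The remainder is bounded by $\E[|P_{n}/k_{n}-q_{n}|\,\omega_{g'}(|P_{n}/k_{n}-q_{n}|)]$. Splitting on the events $\{|P_{n}/k_{n}-q_{n}|\leqslant k_{n}^{-1/2}\}$ and its complement, and using the subadditivity $\omega(t\delta)\leqslant(1+t)\omega(\delta)$ together with Chebyshev's inequality, this is $O(k_{n}^{-1/2}\omega_{g'}(k_{n}^{-1/2}))$. The side claim for $\varphi'$ and $\psi'$ follows from $\varphi'(x)=g(x)+xg'(x)$: this gives $\omega_{\varphi'}(\delta)\leqslant C\delta+\omega_{g'}(\delta)$, with $\delta=k_{n}^{-1/2}$ producing the $\sum s_{n+1}^{-1}$ term.
\item Under \eqref{A3}, a second-order expansion gives an error of order $\mathrm{Var}(P_{n}/k_{n})\leqslant k_{n}^{-1}$, yielding the two stated series.
\end{enumerate}
Finally, we will verify that the discrepancy from $\eta^{(n)}$ being time-varying is absorbed by the definition of $F$.
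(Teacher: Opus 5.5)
Your proposal is correct and follows essentially the same route as the paper: the same stochastic approximation with step sizes $k_n/s_{n+1}$, the same absorption of the time-varying factor $s_{n+1}/((n+1)k_n)$ into the set-valued map $F$, the same square-summable martingale noise argument, and the same summation of the bias terms under \eqref{A1}, \eqref{A2} and \eqref{A3}, culminating in the Bena\"im--Hofbauer--Sorin theorem. The only difference is that you derive the Bernstein-type approximation rates yourself, from binomial moment bounds and Taylor expansions with the modulus of continuity (which in the $\mathcal{C}^{(1)}$ case reproduces the constant $3/4$). The paper instead cites these rates from Math\'e, Lorentz's Theorem 1.6.2 and Voronovskaja.
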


Before stating Theorem~\ref{thm:main_5}, we remark that if $g\in\mathcal{C}^{(1)}[0,1]$, then $\sup\{|g'(q)|:q\in[0,1]\}$ is finite and attained in $[0,1]$ since $[0,1]$ is compact.
\begin{theorem}\label{thm:main_5}
\sloppy Assume that the function $g$, defined in \eqref{g_defn}, is in $\mathcal{C}^{(1)}[0,1]$ and monotonically increasing throughout $[0,1]$, with $g(0)>0$ and $g(1)<1$, and that the function $\hat{G}:[0,1]\rightarrow\mathbb{R}$, defined as
\begin{align}\label{hat{G}_defn}
\hat{G}(q)=q(1-q)\{2g(q)-1\}+\alpha\{g(q)-q\} \text{ for all }q\in[0,1],
\end{align}
with $\alpha$ as in \eqref{beta_{n}_s_{n}_defns}, has a unique root, say $q^{*}$, in $[0,1]$. In addition to the constraints described in \eqref{k_{n}_sequence_conditions} on $\eta_{1}$ and $\eta_{2}$, we assume that $\eta_{2}\leqslant 1$, and letting $\upsilon=\sup\{g'(q):q\in[0,1]\}$, we assume that $0<\upsilon\leqslant\min\{2,(1+\sqrt{5})(1-\eta_{1})^{-1}\eta_{1}\}$. Then, for the model in \S\ref{subsec:model_growing_sample_size}, the sequence $\{(n^{-1}A_{n},s_{n}^{-1}B_{n},s_{n}^{-1}C_{n})\}$ converges almost surely to a compact, connected, internally chain transitive invariant set, corresponding to the differential inclusion $(\dot{a}(t),\dot{b}(t),\dot{c}(t))\in F(a(t),b(t),c(t))$ for $t\geqslant 0$ (with $F$ as defined in \eqref{set_valued_function}), contained in the set
\begin{equation}
\hat{\Lambda}=\left\{(a,b,c)\in\hat{\mathcal{S}}:q_{\alpha}(a,b,c)=q^{*},a=g(q^{*})\right\}.\label{hat{Lambda}_defn}
\end{equation}
\end{theorem}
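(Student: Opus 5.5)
The plan is to combine Theorem~\ref{thm:main_3} with a Lyapunov-function argument for the differential inclusion $(\dot a,\dot b,\dot c)\in F(a,b,c)$. I will assume that the summability hypotheses of Theorem~\ref{thm:main_3} (in the $\mathcal{C}^{(1)}$ form \eqref{A2}, since $g\in\mathcal{C}^{(1)}[0,1]$) are in force. Then Theorem~\ref{thm:main_3} says that the limit set $L$ of $\{(n^{-1}A_{n},s_{n}^{-1}B_{n},s_{n}^{-1}C_{n})\}$ is almost surely a compact, connected, internally chain transitive (ICT) set of $F$ inside $\hat{\mathcal{S}}$. It therefore suffices to show that every ICT set of $F$ lies in $\hat\Lambda$. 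For this I will invoke the result of \cite{benaim2005stochastic} recalled in \S\ref{sec:results_from_literature}: if $V$ is a continuous Lyapunov function for $\hat\Lambda$ (strictly decreasing along every solution of the inclusion that starts outside $\hat\Lambda$, and non-increasing on $\hat\Lambda$) and $V(\hat\Lambda)$ has empty interior, then every ICT set is contained in $\hat\Lambda$.

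\textbf{Step 1: reduce to two coordinates.} I will write $q=q_{\alpha}(a,b,c)=(b+\alpha a)/(c+\alpha)$ and $e=a-g(q)$. Along a solution with $\dot a=\eta(g(q)-a)$, $\dot b=qg(q)-b$ and $\dot c=\psi(q)-c$, the identity $b=q(c+\alpha)-\alpha a$ gives, after a short computation,
\begin{equation*}
(c+\alpha)\dot q=q\{g(q)-\psi(q)\}+\alpha\{g(q)-q\}+\alpha(1-\eta)\{a-g(q)\}=\hat G(q)+\alpha(1-\eta)e .
\end{equation*}
Here I use $q(g-\psi)=q(1-q)(2g-1)$. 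Consequently
\begin{equation*}
\dot e=-\eta e-\frac{g'(q)}{c+\alpha}\left\{\hat G(q)+\alpha(1-\eta)e\right\}.
\end{equation*}
Thus $\hat\Lambda$ is exactly the set $\{q=q^{*},\,e=0\}$. Moreover $c+\alpha\in[\alpha,1+\alpha]$ on $\hat{\mathcal{S}}$, and $\eta\in[\eta_{1},\eta_{2}]\subset(0,1]$, so that $1-\eta\geqslant 0$.

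\textbf{Step 2: sign information.} Since $\hat G(0)=\alpha g(0)>0$, $\hat G(1)=-\alpha\{1-g(1)\}<0$, and $q^{*}$ is the unique root, we get $(q-q^{*})\hat G(q)<0$ for $q\neq q^{*}$. Monotonicity of $g$ gives $0\leqslant g'\leqslant\upsilon$.

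\textbf{Step 3: the Lyapunov function.} I will take a quadratic candidate $V=(q-q^{*})^{2}+\lambda e^{2}$, or possibly a weighted version $\int_{q^{*}}^{q}(\cdot)$ if needed, with $\lambda>0$ to be tuned. Differentiating gives
\begin{equation*}
\tfrac12\dot V=\frac{(q-q^{*})\hat G(q)}{c+\alpha}+\frac{\alpha(1-\eta)}{c+\alpha}(q-q^{*})e-\lambda\eta e^{2}-\frac{\lambda g'(q)}{c+\alpha}\bigl\{\hat G(q)e+\alpha(1-\eta)e^{2}\bigr\}.
\end{equation*}
The terms $-\lambda\eta e^{2}$, $-\lambda g'\alpha(1-\eta)e^{2}/(c+\alpha)$ and $(q-q^{*})\hat G/(c+\alpha)$ are non-positive and strictly negative away from $\hat\Lambda$. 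The two cross terms, one in $(q-q^{*})e$ and one in $\hat G(q)e$, must be absorbed. I will bound $|\hat G(q)|$ by a multiple of $|q-q^{*}|$ using $\hat G\in\mathcal{C}^{(1)}$ together with Step~2, or alternatively replace $(q-q^{*})^{2}$ by $-\int_{q^{*}}^{q}\hat G$ so that the $\hat G e$ term pairs naturally. Then I will require the resulting $2\times 2$ quadratic form in $(|\hat G|\text{ or }|q-q^{*}|,\,|e|)$ to be negative definite uniformly in $\eta\in[\eta_{1},\eta_{2}]$, $c\in[0,1]$ and $g'\in[0,\upsilon]$. This discriminant condition is where I expect the hypotheses $\eta_{2}\leqslant1$ and $\upsilon\leqslant\min\{2,(1+\sqrt5)\eta_{1}/(1-\eta_{1})\}$ to enter; the golden-ratio-type constant strongly suggests a determinant condition of the form $\upsilon^{2}(1-\eta_{1})^{2}-2\upsilon\eta_{1}(1-\eta_{1})-4\eta_{1}^{2}\leqslant0$, after optimizing over $\lambda$.

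\textbf{Step 4: conclusion.} Once $\dot V<0$ off $\hat\Lambda$ and $V\equiv0$ on $\hat\Lambda$ are established, I will pass from derivatives to absolutely continuous solutions of the inclusion, using upper semicontinuity and convexity of $F$. Since $V(\hat\Lambda)=\{0\}$ has empty interior, the cited result places every ICT set of $F$ in $\hat\Lambda$, and combining with Theorem~\ref{thm:main_3} finishes the proof.

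The main obstacle is Step~3: choosing $V$ and $\lambda$ so that the cross terms are dominated uniformly over the whole set-valued field. The first thing I will try is the integral form $-\int_{q^{*}}^{q}\hat G$ plus $\lambda e^{2}$ with $\lambda$ optimized, checking the $2\times2$ determinant against the stated bounds on $\upsilon$.
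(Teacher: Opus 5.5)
Your plan is the paper's proof. It has the same identity $(c+\alpha)\dot q=\hat G(q)+\alpha(1-\eta)e$ and the same Lyapunov function $-\int_{q^{*}}^{q}\hat G+\lambda e^{2}$ (the paper writes $\lambda=\kappa/2$), whose derivative is $-(c+\alpha)^{-1}$ times a quadratic form in $(\hat G(q),e)$ that a suitable $\kappa$ makes positive definite uniformly in $\eta\in[\eta_1,\eta_2]$, $c\geqslant 0$, $g'\in[0,\upsilon]$; your final appeal to the Lyapunov-function criterion for internally chain transitive sets in \cite{benaim2005stochastic} (which is not among the results recalled in \S\ref{sec:results_from_literature}) is what the paper re-derives by hand via sublevel sets and Propositions~3.19--3.20 there. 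Do commit to the integral form rather than $(q-q^{*})^{2}+\lambda e^{2}$: $q^{*}$ may be a degenerate root of $\hat G$, so the cross term $\alpha(1-\eta)(q-q^{*})e$ cannot in general be absorbed by $(q-q^{*})\hat G(q)$, whereas in the integral form every cross term pairs with $\hat G(q)$.
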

Some discussion is now in order pertaining to the equilibrium points corresponding to the differential inclusion mentioned in the statement of Theorem~\ref{thm:main_5}. Let $(a^{*},b^{*},c^{*})$ be such an equilibrium point, so that $(0,0,0)\in F(a^{*},b^{*},c^{*})$. Since $\eta_{1}>0$, this becomes equivalent to having
\begin{equation}\label{equilibrium_equations}
\begin{cases}
&a^{*}=g\left(q_{\alpha}(a^{*},b^{*},c^{*})\right),\quad b^{*}=q_{\alpha}(a^{*},b^{*},c^{*})g\left(q_{\alpha}(a^{*},b^{*},c^{*})\right),\\
&c^{*}=q_{\alpha}(a^{*},b^{*},c^{*})g\left(q_{\alpha}(a^{*},b^{*},c^{*})\right)+\left\{1-q_{\alpha}(a^{*},b^{*},c^{*})\right\}\left\{1-g\left(q_{\alpha}(a^{*},b^{*},c^{*})\right)\right\}.
\end{cases}
\end{equation}
Substituting these expressions in the definition of $q_{\alpha}$ given by \eqref{q_defn}, we obtain:
\begin{align}
{}&q_{\alpha}(a^{*},b^{*},c^{*})=\frac{\left\{q_{\alpha}(a^{*},b^{*},c^{*})+\alpha\right\}g\left(q_{\alpha}(a^{*},b^{*},c^{*})\right)}{q_{\alpha}(a^{*},b^{*},c^{*})g\left(q_{\alpha}(a^{*},b^{*},c^{*})\right)+\left\{1-q_{\alpha}(a^{*},b^{*},c^{*})\right\}\left\{1-g\left(q_{\alpha}(a^{*},b^{*},c^{*})\right)\right\}+\alpha}\nonumber\\
{}&\Longleftrightarrow\hat{G}\left(q_{\alpha}(a^{*},b^{*},c^{*})\right)=0 \Longleftrightarrow q_{\alpha}(a^{*},b^{*},c^{*})=q^{*},\label{q(equilibrium)=q^{*}}
\end{align}
where $\hat{G}$ is as defined in \eqref{hat{G}_defn}, and the final implication is true since $\hat{G}$ has a unique root, $q^{*}$, in $[0,1]$. Substituting the final conclusion from \eqref{q(equilibrium)=q^{*}} into \eqref{equilibrium_equations}, we conclude that 
\begin{equation}
a^{*}=g(q^{*}),\quad b^{*}=q^{*}g(q^{*}) \quad \text{and} \quad c^{*}=q^{*}g(q^{*})+\{1-q^{*}\}\{1-g(q^{*})\}.\label{equilibrium_defn}
\end{equation}
\sloppy This leads to the conclusion that $(a^{*},b^{*},c^{*})$, defined by \eqref{equilibrium_defn}, is, in fact, the \emph{only} equilibrium point corresponding to the differential inclusion mentioned in the statement of Theorem~\ref{thm:main_5}, and while it is evidently contained in the set $\hat{\Lambda}$ described in \eqref{hat{Lambda}_defn}, we cannot conclude that the stochastic process $\{(n^{-1}A_{n},s_{n}^{-1}B_{n},s_{n}^{-1}C_{n})\}$ converges almost surely to $(a^{*},b^{*},c^{*})$ \emph{alone}.

A special case of Theorem~\ref{thm:main_3} is obtained when $(n+1)^{-1}k_{n}^{-1}s_{n+1}$ converges to a limit as $n\rightarrow\infty$:
\begin{theorem}\label{thm:main_4}
Consider the model described in \S\ref{subsec:model_growing_sample_size}, and assume that the limit $\lim_{n\rightarrow\infty}(n+1)^{-1}k_{n}^{-1}s_{n+1}$ exists and equals $\eta$ for some $\eta\in(0,1)$. Assume that the first two criteria of \eqref{k_{n}_sequence_conditions} are satisfied, and that 
\begin{equation}\label{k_{n}_eta_conditions}
\sum_{n\geqslant N}\frac{k_{n}}{s_{n+1}}\left|\frac{s_{n+1}}{(n+1)k_{n}}-\eta\right| \text{ converges}.
\end{equation}
Assume, also, that one of \eqref{A1}, \eqref{A2} and \eqref{A3} is satisfied. Then the sequence $\{(n^{-1}A_{n},s_{n}^{-1}B_{n},s_{n}^{-1}C_{n})\}$ converges almost surely to a (possibly path dependent) compact, connected, internally chain transitive invariant set, contained in $\hat{\mathcal{S}}$, corresponding to the autonomous ODE 
\begin{equation}
\begin{bmatrix}
\dot{a}(t)\\
\dot{b}(t)\\
\dot{c}(t)
\end{bmatrix}=\begin{bmatrix}
\eta\hat{h}_{1}(a(t),b(t),c(t))\\
\hat{h}_{2}(a(t),b(t),c(t))\\
\hat{h}_{3}(a(t),b(t),c(t))
\end{bmatrix},\quad \text{with} \quad \begin{bmatrix}
a(0)\\
b(0)\\
c(0)
\end{bmatrix}=\begin{bmatrix}
A_{N}/N\\
0\\
0
\end{bmatrix},\label{ODE_special_case}
\end{equation}
where $\hat{h}_{1}$, $\hat{h}_{2}$ and $\hat{h}_{3}$ are as defined via \eqref{hat{h}_defn_growing_size}, \eqref{hat{H}_{1}_defn}, \eqref{hat{H}_{2}_defn} and \eqref{hat{H}_{3}_defn}.
\end{theorem}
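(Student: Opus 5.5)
The approach is to write the iterates $Z_{n}=(n^{-1}A_{n},s_{n}^{-1}B_{n},s_{n}^{-1}C_{n})$ as a stochastic approximation scheme with a single step size, reduce it to the setting of Theorem~\ref{thm:main_3}, and show that the set-valued perturbation produced by $\eta_{n}=s_{n+1}/((n+1)k_{n})$ vanishes summably. Once that is done, the asymptotic pseudotrajectory framework of \cite{benaim2005stochastic}, in its single-valued form (equivalently, the Kushner--Clark / Borkar theorem in \cite{borkar2008stochastic}), yields convergence to an internally chain transitive invariant set of the autonomous ODE \eqref{ODE_special_case}.

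\textbf{Step 1 (recursion).} Take the step size $\gamma_{n+1}=k_{n}/s_{n+1}$. We have $A_{n+1}=A_{n}+\chi\{X_{n+1}=+1\}$, and $B_{n+1}-B_{n}$, $C_{n+1}-C_{n}$ count the sampled agents that agree and are in state $+1$ (respectively, all agreeing sampled agents). This gives
\begin{equation*}
Z_{n+1}=Z_{n}+\gamma_{n+1}\left[D_{n}\hat{h}(Z_{n})+M_{n+1}+\varepsilon_{n+1}\right],\qquad D_{n}=\mathrm{diag}(\eta_{n},1,1).
\end{equation*}
For the first coordinate, $(n+1)^{-1}=\gamma_{n+1}\eta_{n}$. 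Here $M_{n+1}$ is a martingale difference with bounded conditional second moments. The term $\varepsilon_{n+1}$ is the bias from replacing $\E[g(P_{n}/k_{n})|\mathcal{F}_{n}]$ by $g(q_{\alpha}(Z_{n}))$, and similarly for $\varphi,\psi$. The sampling probability of a $+1$ agent is exactly $q_{\alpha}(Z_{n})$, because $n\beta_{n}=\alpha s_{n}$.

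\textbf{Step 2 (error terms).} This is exactly the bookkeeping done in the proof of Theorem~\ref{thm:main_3}. Under \eqref{A1}, \eqref{A2} or \eqref{A3}, the bias $\varepsilon_{n+1}$ satisfies $\sum\gamma_{n+1}\|\varepsilon_{n+1}\|<\infty$, using binomial concentration at scale $k_{n}^{-1/2}$ (or a second-order Taylor expansion under \eqref{A3}). Together with $\sum\gamma_{n+1}^{2}<\infty$, this makes $\sum\gamma_{n+1}M_{n+1}$ converge almost surely. I would quote these estimates from the proof of Theorem~\ref{thm:main_3} rather than redo them.

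\textbf{Step 3 (removing the varying $\eta_{n}$).} Write $D_{n}\hat{h}(Z_{n})=D\hat{h}(Z_{n})+(\eta_{n}-\eta)\hat{h}_{1}(Z_{n})e_{1}$ with $D=\mathrm{diag}(\eta,1,1)$. Since $\hat{h}_{1}$ is bounded on the compact set $\hat{\mathcal{S}}$, the new perturbation satisfies
\begin{equation*}
\sum_{n}\gamma_{n+1}|\eta_{n}-\eta|\,|\hat{h}_{1}(Z_{n})|\leqslant \sup|\hat{h}_{1}|\sum_{n}\frac{k_{n}}{s_{n+1}}\left|\frac{s_{n+1}}{(n+1)k_{n}}-\eta\right|<\infty
\end{equation*}
by \eqref{k_{n}_eta_conditions}. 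It can therefore be absorbed into the summable bias term.

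\textbf{Step 4 (conclusion).} The iterates are bounded because $Z_{n}\in\hat{\mathcal{S}}$ for all $n$. The map $D\hat{h}$ is continuous, and Lipschitz whenever $g$ is, which covers \eqref{A1}--\eqref{A3}. Also $\sum\gamma_{n+1}=\infty$ and $\sum\gamma_{n+1}^{2}<\infty$. The linear interpolation of $Z_{n}$ on the time scale $\sum\gamma_{i}$ is then an asymptotic pseudotrajectory of $\dot z=D\hat{h}(z)$. The limit set is therefore a compact, connected, internally chain transitive invariant set of \eqref{ODE_special_case}, contained in $\hat{\mathcal{S}}$.

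The main obstacle is Step 2 under the weaker hypotheses \eqref{A1}/\eqref{A2}, where only a modulus of continuity is available. However, this step is inherited from Theorem~\ref{thm:main_3}; the genuinely new ingredient is only the summability in Step 3.
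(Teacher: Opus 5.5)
Your proposal is correct and is essentially the paper's own proof. The paper likewise rewrites the first coordinate with the constant factor $\eta$, absorbs $\{s_{n+1}/((n+1)k_{n})-\eta\}\hat{h}_{1}$ into the bias using $|\hat{h}_{1}|\leqslant 1$ and \eqref{k_{n}_eta_conditions}, reuses the martingale and Bernstein-approximation estimates from the proof of Theorem~\ref{thm:main_3}, and concludes via Theorem~\ref{thm:borkar_a.s.}.

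One detail both you and the paper leave implicit is that the drift is Lipschitz only on $\hat{\mathcal{S}}$. Invoking Theorem~\ref{thm:borkar_a.s.} literally therefore needs a Lipschitz (Kirszbraun) extension to $\mathbb{R}^{3}$, as in the proof of Lemma~\ref{lem:positively_invariant_domain}.
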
 

We include here a brief discussion consisting of examples of sequences $\{k_{n}:n\geqslant N\}$ for which \eqref{k_{n}_sequence_conditions} is satisfied. Fixing $0<\zeta\leqslant 1$ and constants $\pi_{1},\pi_{2}>0$ (if $\zeta=1$, one would usually set $\max\{\pi_{1},\pi_{2}\}\leqslant 1$), if we set $k_{n}=\pi_{1}n^{\zeta}$ for each even $n$ and $k_{n}=\pi_{2}n^{\zeta}$ for each odd $n$, for all $n\geqslant N$, we see that 
\begin{align}
{}&\frac{s_{2n+1}}{(2n+1)k_{2n}}=\frac{1}{\pi_{1}(2n)^{\zeta}(2n+1)}\left\{\sum_{i=1}^{n}\pi_{1}(2i)^{\zeta}+\sum_{i=1}^{n}\pi_{2}(2i-1)^{\zeta}\right\}+o(1)\nonumber\\
={}&\frac{1}{\pi_{1}(2n)^{\zeta}(2n+1)}\left\{(\pi_{1}-\pi_{2})\sum_{i=1}^{n}(2i)^{\zeta}+\pi_{2}\sum_{i=1}^{2n}i^{\zeta}\right\}+o(1)\nonumber\\
={}&\frac{1}{\pi_{1}(2n)^{\zeta}(2n+1)}\left\{(\pi_{1}-\pi_{2})2^{\zeta}\int_{1}^{n}x^{\zeta}dx(1+o(1))+\pi_{2}\int_{1}^{2n}x^{\zeta}dx(1+o(1))\right\}+o(1)
\rightarrow\frac{\pi_{1}+\pi_{2}}{2\pi_{1}(\zeta+1)},\nonumber
\end{align}
and
\begin{align}
{}&\frac{s_{2n+2}}{(2n+2)k_{2n+1}}=\frac{1}{\pi_{2}(2n+1)^{\zeta}(2n+2)}\left\{\sum_{i=1}^{n}\pi_{1}(2i)^{\zeta}+\sum_{i=1}^{n+1}\pi_{2}(2i-1)^{\zeta}\right\}+o(1)\nonumber\\
={}&\frac{1}{\pi_{2}(2n+1)^{\zeta}(2n+2)}\left\{(\pi_{1}-\pi_{2})\sum_{i=1}^{n}(2i)^{\zeta}+\pi_{2}\sum_{i=1}^{2n+1}i^{\zeta}\right\}+o(1)\nonumber\\
={}&\frac{1}{\pi_{2}(2n+1)^{\zeta}(2n+2)}\left\{(\pi_{1}-\pi_{2})2^{\zeta}\int_{1}^{n}x^{\zeta}dx(1+o(1))+\pi_{2}\int_{1}^{2n+1}x^{\zeta}dx(1+o(1))\right\}+o(1)
\rightarrow\frac{\pi_{1}+\pi_{2}}{2\pi_{2}(\zeta+1)}.\nonumber
\end{align}
Assuming $\pi_{1}>\pi_{2}$, we see, right away, that 
\begin{equation}
\liminf_{n\rightarrow\infty}\frac{s_{n+1}}{(n+1)k_{n}}=\frac{\pi_{1}+\pi_{2}}{2\pi_{1}(\zeta+1)} \quad \text{and} \quad \limsup_{n\rightarrow\infty}\frac{s_{n+1}}{(n+1)k_{n}}=\frac{\pi_{1}+\pi_{2}}{2\pi_{2}(\zeta+1)}.\nonumber
\end{equation}
By definition of $\limsup$ and $\liminf$, we know that, given an \emph{arbitrarily small} $\epsilon>0$, we may choose $N$ sufficiently large such that 
\begin{equation}
\eta_{1}=\inf\{\frac{s_{n+1}}{(n+1)k_{n}}:n\geqslant N\}>\frac{\pi_{1}+\pi_{2}}{2\pi_{1}(\zeta+1)}-\epsilon \text{ and } \eta_{2}=\sup\{\frac{s_{n+1}}{(n+1)k_{n}}:n\geqslant N\}<\frac{\pi_{1}+\pi_{2}}{2\pi_{2}(\zeta+1)}+\epsilon,\nonumber
\end{equation}
demonstrating that the third criterion stated in \eqref{k_{n}_sequence_conditions} is satisfied for $\epsilon$, and accordingly, $N$, chosen suitably. Depending on $\zeta$, we may even choose $\pi_{1}$ and $\pi_{2}$ such that $\eta_{2}\leqslant 1$, which is a criterion appearing in the statement of Theorem~\ref{thm:main_5}. Moreover, given any arbitrarily small $\delta>0$, there exists $n_{\delta}\in\mathbb{N}$ sufficiently large such that, for all $n\geqslant n_{\delta}$:
\begin{align}
\eta_{1}-\delta<\frac{s_{n+1}}{k_{n}(n+1)}<\eta_{2}+\delta \Longleftrightarrow \frac{1}{(\eta_{2}+\delta)(n+1)}<\frac{k_{n}}{s_{n+1}}<\frac{1}{(\eta_{1}-\delta)(n+1)},\nonumber
\end{align} 
and these bounds, together, ensure that the first two criteria stated in \eqref{k_{n}_sequence_conditions} are satisfied as well. Setting $\pi_{1}=\pi_{2}=\pi$, so that $k_{n}=\pi n^{\zeta}$ for all $n\in\mathbb{N}$ with $n\geqslant N$, we have 
\begin{align}
s_{n+1}={}&
\pi\sum_{i=N}^{n}\int_{i}^{i+1}x^{\zeta}dx+\pi\sum_{i=N}^{n}\int_{i}^{i+1}\left(i^{\zeta}-x^{\zeta}\right)dx\nonumber\\
={}&\pi\int_{N}^{n+1}x^{\zeta}dx+\pi\sum_{i=N}^{n}\int_{i}^{i+1}\zeta(i-x)\xi_{1}(x)^{\zeta-1}dx\quad\text{for some }i<\xi_{1}(x)<x;\nonumber\\
={}&\pi\left\{\frac{(n+1)^{\zeta+1}}{\zeta+1}-\frac{N^{\zeta+1}}{\zeta+1}\right\}+O\left(\sum_{i=N}^{n}\int_{i}^{i+1}x^{\zeta-1}dx\right)=\frac{\pi(n+1)^{\zeta+1}}{\zeta+1}+O\left((n+1)^{\zeta}\right),\label{intermediate_1}
\end{align}
where the second-last step leading to the derivation of \eqref{intermediate_1} is obtained by noting that $|i-x|\leqslant 1$ for all $x\in[i,i+1]$, and since $N\leqslant i<\xi_{1}(x)<x\leqslant(i+1)$ and $\zeta\leqslant 1$, we have $\xi_{1}(x)^{\zeta-1}/x^{\zeta-1}=\{x/\xi_{1}(x)\}^{1-\zeta}\leqslant\{(i+1)/i\}^{1-\zeta}\leqslant\{(N+1)/N\}^{1-\zeta}$. From \eqref{intermediate_1}, we have $k_{n}/s_{n+1}\leqslant(\zeta+1)\{n+O(1)\}^{-1}$, as well as 
\begin{align}
\frac{s_{n+1}}{(n+1)k_{n}}-\frac{1}{\zeta+1}
={}&\frac{1}{(\zeta+1)n^{\zeta}}\left\{(n+1)^{\zeta}-n^{\zeta}\right\}+O\left(\frac{(n+1)^{\zeta-1}}{n^{\zeta}}\right)=O(n^{-1}),\nonumber
\end{align}
where the last step follows from noting that $(n+1)^{\zeta}-n^{\zeta}=\zeta \xi_{2}(n)^{\zeta-1}$ for some $n<\xi_{2}(n)<n+1$, so that $(n+1)^{\zeta}-n^{\zeta}\leqslant \zeta n^{\zeta-1}$ (keeping in mind that $\zeta-1\leqslant 0$). These estimates show that \begin{enumerate*}
\item the limit $\eta$ of the sequence $\{(n+1)^{-1}k_{n}^{-1}s_{n+1}\}$ exists and equals $(\zeta+1)^{-1}$, and
\item the criterion of \eqref{k_{n}_eta_conditions} is satisfied.
\end{enumerate*} 

We end \S\ref{sec:main_results} with the following proposition, similar in flavour to Proposition~\ref{prop:examples_g_unique_root_of_G}:
\begin{prop}\label{prop:hat{G}_unique_root_examples}
Recall $f$ and $g$ from \eqref{X_{n+1}_conditional_distribution} and \eqref{g_defn} respectively. The function $g$ is monotonically increasing, and the function $\hat{G}$, defined in \eqref{hat{G}_defn}, has a unique root in $[0,1]$, whenever
\begin{enumerate}
\item $f(x)=x$ for all $x\in[0,1]$, with $p\in(1/2,1)$ and
\begin{equation}
\alpha\geqslant\frac{(2p-1)}{4(1-p)},\label{alpha_inequality}
\end{equation}
\item $f(x)=x$ for all $x\in[0,1]$, with $p\in(1/2,1)$, when \eqref{alpha_inequality} fails to hold but we have 
\begin{equation}
\text{either }\hat{G}\left(\frac{1}{2}-\frac{\sqrt{3}}{6}\sqrt{1-\frac{4\alpha(1-p)}{2p-1}}\right)>0 \text{ or } \hat{G}\left(\frac{1}{2}+\frac{\sqrt{3}}{6}\sqrt{1-\frac{4\alpha(1-p)}{2p-1}}\right)<0.\label{root_inequalities_hat{G}}
\end{equation}
\item $f(x)=e^{x-1}$ for all $x\in[0,1]$, $p\in(1/2,1)$ and $\alpha>0.15218(2p-1)/(1-p)$.
\end{enumerate}
\end{prop}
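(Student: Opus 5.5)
The plan is to treat the three cases separately. In each, the monotonicity of $g$ comes first, and then the structure of $\hat{G}$ on $[0,1]$. For $f(x)=x$ we have $g(x)=px+(1-p)(1-x)=(1-p)+(2p-1)x$, which is increasing since $p>1/2$. For $f(x)=e^{x-1}$ we have $g(x)=(1-p)+(2p-1)e^{x-1}$, again increasing for $p>1/2$. In all cases $g(0)\geqslant 1-p>0$ and $g(1)\leqslant p<1$. Hence $\hat{G}(0)=\alpha g(0)>0$ and $\hat{G}(1)=\alpha\{g(1)-1\}<0$, so a root exists in $(0,1)$, and only uniqueness needs an argument.

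For cases (1) and (2), substituting the linear $g$ turns $\hat{G}$ into a cubic. Write $u=2p-1>0$, so that $2g(q)-1=u(2q-1)$ and $g(q)-q=(1-p)-(1-p)\cdot 2q+\ldots$; more precisely $g(q)-q=(1-p)(1-2q)$. Thus
\begin{equation*}
\hat{G}(q)=(1-2q)\left\{\alpha(1-p)-u\,q(1-q)\right\}.
\end{equation*}
This factorization is the key point and should be checked carefully. One root is $q=1/2$. The other roots solve $q(1-q)=\alpha(1-p)/u$. Since $\max q(1-q)=1/4$, there are no other real roots when $\alpha(1-p)/u>1/4$, i.e.\ when $\alpha>(2p-1)/\{4(1-p)\}$. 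When equality holds in \eqref{alpha_inequality}, the only extra root is the double root $q=1/2$ itself, so uniqueness holds under \eqref{alpha_inequality}, settling (1). For (2), I would argue via the critical points: $\hat{G}'(q)=-3u(\cdots)$ is a quadratic symmetric about $1/2$, with roots $\tfrac12\pm\tfrac{\sqrt3}{6}\sqrt{1-4\alpha(1-p)/u}$, matching \eqref{root_inequalities_hat{G}}. A cubic with positive leading coefficient $2u$ (expand to see this) has three real roots exactly when its local max is positive and its local min is negative. If either the value at the smaller critical point (the local max) is $\leqslant 0$ or the value at the larger one (the local min) is $\geqslant 0$, then at most one crossing occurs. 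I will need to reconcile this with the direction of the inequalities stated in \eqref{root_inequalities_hat{G}} and with the sign of the leading coefficient. Given the factorization, the three-root situation is actually forced when \eqref{alpha_inequality} fails strictly, so I expect (2) to reduce to a consistency check on the critical-value conditions. The main delicacy is keeping track of which critical point is a max versus a min.

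For case (3), the plan is to show $\hat{G}$ is strictly decreasing, or at least has a single sign change. Write $\hat{G}(q)=q(1-q)\{2g(q)-1\}+\alpha\{g(q)-q\}$ and differentiate:
\begin{equation*}
\hat{G}'(q)=(1-2q)\{2g-1\}+2q(1-q)g'+\alpha(g'-1),
\end{equation*}
with $g'=ue^{q-1}\leqslant u$ and $2g-1=u(2e^{q-1}-1)$. The first two terms are bounded by $u\,M$, where $M=\sup_{q\in[0,1]}\{(1-2q)(2e^{q-1}-1)+2q(1-q)e^{q-1}\}$. The last term is at most $-\alpha(1-u)=-2\alpha(1-p)$. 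Thus $\hat{G}'<0$ whenever $\alpha>uM/\{2(1-p)\}$. I would expect the one-variable numerical maximization to give $M/2\approx0.15218$, which reproduces the stated threshold $\alpha>0.15218(2p-1)/(1-p)$.

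The main obstacle is this numerical step in (3). It requires rigorously certifying the constant, either by a bound on the second derivative together with a grid check, or by locating the maximizer through a transcendental equation. If the crude bound $g'\leqslant u$ is too lossy, I would instead keep $\alpha(g'-1)$ exact and maximize the full expression divided by $u$ over $q$, treating the ratio $\alpha(1-p)/u$ as a parameter.
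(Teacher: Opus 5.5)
For items (1)--(2) you take a genuinely different and sharper route than the paper. The paper never factors $\hat G$. It works with $\hat G'(q)=(2p-1)(6q-6q^{2}-1)-2\alpha(1-p)$, a quadratic symmetric about $\tfrac12$, and argues by monotonicity. Under \eqref{alpha_inequality}, $\hat G$ is strictly decreasing; otherwise it is decreasing--increasing--decreasing, and the two critical-value conditions force a single crossing. Your factorization $\hat G(q)=(1-2q)\{\alpha(1-p)-(2p-1)q(1-q)\}$ is correct. It settles (1) at once and even identifies the root as $q^{*}=\tfrac12$. It also shows something the paper does not notice. Set $D=1-4\alpha(1-p)/(2p-1)$. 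If \eqref{alpha_inequality} fails, then $D\in(0,1)$ and $\hat G$ has three roots, $\tfrac12$ and $\tfrac12\pm\tfrac12\sqrt{D}$, all in $(0,1)$. So the hypothesis of item (2) can never be satisfied, and (2) holds only vacuously. The paper's argument for (2) is logically fine but never applies.

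Your treatment of (2), however, is not yet a proof, and its orientation is wrong.
\begin{itemize}
\item The cubic coefficient of $\hat G$ is $-2(2p-1)<0$, not $+2(2p-1)$.
\item Hence $\gamma_{1}=\tfrac12-\tfrac{\sqrt{3}}{6}\sqrt{D}$ is a local minimum and $\gamma_{2}=\tfrac12+\tfrac{\sqrt{3}}{6}\sqrt{D}$ is a local maximum. That is why the paper's conditions, $\hat G(\gamma_{1})>0$ or $\hat G(\gamma_{2})<0$, are the correct single-crossing criteria.
\item With your orientation, the criterion ``value at the smaller critical point $\leqslant 0$'' holds in exactly the situation where three roots exist, so the argument would break.
\end{itemize}
You can close (2) directly from your own factorization. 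With $x=q-\tfrac12$ one has $\hat G=-2(2p-1)\,x\,(x^{2}-D/4)$. Hence $\hat G(\gamma_{1})=-\tfrac{(2p-1)D}{3}\sqrt{D/12}<0<\hat G(\gamma_{2})$ identically, and neither condition in (2) can hold.

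Item (3) follows the paper. It uses the same formula for $\hat G'$ and the same two crude bounds: the sup of your bracket, and $1-(2p-1)e^{q-1}\geqslant 2(1-p)$. The paper locates the extremum via convexity of $1-2q-2(1-q-q^{2})e^{q-1}$ and the equation $(3q+q^{2})e^{q-1}=1$.

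Your concern about certifying the constant is warranted. A careful evaluation gives $M\approx0.3043605$, so $M/2\approx0.1521803$, slightly above $0.15218$. The crude bound (yours and the paper's alike) therefore does not literally yield $\hat G'<0$ at the stated threshold.

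This does not affect uniqueness, as follows.
\begin{itemize}
\item Your bracket is concave. Its values at $1-\ln 2$ and at $\tfrac12$ are about $0.213$ and $0.3033$, so it exceeds $0.30436$ only on a subinterval of $(1-\ln 2,\tfrac12)$.
\item On all of $(1-\ln 2,\tfrac12]$, both $2g(q)-1=(2p-1)(2e^{q-1}-1)$ and $g(q)-q=(\tfrac12-q)+(2p-1)(e^{q-1}-\tfrac12)$ are positive, so $\hat G>0$ there.
\item Everywhere else the crude bound gives $\hat G'<0$.
\item Since $\hat G(1-\ln 2)=\alpha(\ln 2-\tfrac12)>0$, the unique root lies in $(\tfrac12,1)$.
\end{itemize}
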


\section{Results from the literature crucial for proving our main results}\label{sec:results_from_literature}
This section is dedicated to stating, sometimes with modifications or slight generalizations, results from the literature that are crucial for proving the main results of this paper. Before we begin, we include here a small paraphernalia of definitions of terms that have appeared in the statements and proofs of the results in \S\ref{sec:main_results} as well as in Lemmas~\ref{lem:positively_invariant_domain} and \ref{lem:strongly_positively_invariant_growing_sample_size}.

Given a function $F:\mathbb{R}^{d}\rightarrow\mathbb{R}^{d}$, the ordinary differential equation $\dot{\mathbf{x}}(t)=F(\mathbf{x}(t))$, with initial condition $\mathbf{x}(0)=\mathbf{a}$, is said to be \emph{well-posed} if, for every choice of $\mathbf{a}\in\mathbb{R}^{d}$, there exists a unique solution $\mathbf{x}_{\mathbf{a}}(t)$ for all $t\geqslant 0$, and the map $\mathbf{a}\mapsto\mathbf{x}_{\mathbf{a}}(\cdot)\in C([0,\infty),\mathbb{R}^{d})$ is continuous (here, $C([0,\infty),\mathbb{R}^{d})$ indicates the space of all continuous functions from $[0,\infty)$ to $\mathbb{R}^{d}$). A well-known sufficient condition for this ODE to be well-posed is that $F$ is Lispchitz throughout $\mathbb{R}^{d}$, i.e.\ there exists $L>0$ such that $||F(\mathbf{x})-F(\mathbf{y})||\leqslant L||\mathbf{x}-\mathbf{y}||$ for all $\mathbf{x},\mathbf{y}\in\mathbb{R}^{d}$ (see, for instance, Theorem B.1 of \cite{borkar2008stochastic}). The map $\Phi_{t}:\mathbb{R}^{d}\rightarrow\mathbb{R}^{d}$, with $\Phi_{t}(\mathbf{a})=\mathbf{x}_{\mathbf{a}}(t)$ for every $\mathbf{a}\in\mathbb{R}^{d}$ and every $t\in\mathbb{R}$, is called the \emph{flow} of this ODE (assuming it to be well-posed). It is termed a \emph{semiflow} if $\Phi_{t}(\mathbf{a})=\mathbf{x}_{\mathbf{a}}(t)$ for every $\mathbf{a}\in\mathbb{R}^{d}$ and every $t\geqslant 0$.

According to Chapter 2 of \cite{borkar2008stochastic}, we call $A\subset\mathbb{R}^{d}$ an \emph{invariant set} corresponding to the ODE mentioned in the previous paragraph, assuming it to be well-posed, if $\mathbf{x}_{\mathbf{a}}(t)\in A$ for all $t\in\mathbb{R}$ as long as $\mathbf{a}\in A$ -- equivalently, $\Phi_{t}(A)\subset A$ for all $t\in\mathbb{R}$ (in other words, a trajectory initiated inside $A$ must stay inside $A$ at all times, whether going forward or backward). On the other hand, \S 5 of \cite{benaim2006dynamics} imposes a stricter criterion: a set $A\subset\mathbb{R}^{d}$ is called invariant if $\Phi_{t}(A)=A$ for all $t\in\mathbb{R}$, requiring every element of $A$ to have a pre-image (in $A$) under $\Phi_{t}$, for all $t\in\mathbb{R}$. The two main definitions (in this paper) that involve the notion of invariant sets are those of \emph{internally chain transitive invariant sets} and \emph{attractors}. It follows (from the definitions in the next paragraph) that for any $A\subset\mathbb{R}^{d}$ to be either internally chain transitive or an attractor, $A$ must be invariant in the stricter sense of the term (i.e.\ according to the definition given by \cite{benaim2006dynamics}). 

We call $A$ \emph{positively invariant} if $\mathbf{x}_{\mathbf{a}}(t)\in A$ for all $t\geqslant 0$ as long as $\mathbf{a}\in A$ (equivalently, $\Phi_{t}(A)\subset A$ for all $t\geqslant 0$). We call $A$ \emph{internally chain transitive} if, in addition to being invariant, 
\begin{enumerate*}
\item it is compact, and 
\item for all $\mathbf{u},\mathbf{v}\in A$ and all $\epsilon>0$ and $T>0$, there exist $n\in\mathbb{N}$ and $\mathbf{u}_{1},\ldots,\mathbf{u}_{n-1}\in A$ such that, if we set $\mathbf{u}_{0}=\mathbf{u}$ and $\mathbf{u}_{n}=\mathbf{v}$, then the solution $\mathbf{x}_{\mathbf{u}_{i-1}}$, initiated at $\mathbf{u}_{i-1}$, meets the $\epsilon$-neighourhood of $\mathbf{u}_{i}$ after a time $\geqslant T$, for each $i\in[n]$ -- equivalently, there exists $t_{i}\geqslant T$ such that $||\mathbf{x}_{\mathbf{u}_{i-1}}(t_{i})-\mathbf{u}_{i}||=||\Phi_{t_{i}}(\mathbf{u}_{i-1})-\mathbf{u}_{i}||<\epsilon$.
\end{enumerate*}
An invariant set $A$ is said to be \emph{Lyapunov stable} if for each $\epsilon>0$, there exists $\delta>0$ such that every trajectory $\mathbf{x}_{\mathbf{a}}(\cdot)$ initiated in the $\delta$-neighbourhood of $A$ remains forever inside the $\epsilon$-neighbourhood of $A$ -- in other words, whenever dist$(\mathbf{a},A)=\inf\{||\mathbf{a}-\mathbf{b}||:\mathbf{b}\in A\}<\delta$, we have dist$(\mathbf{x}_{\mathbf{a}}(t),A)=\text{dist}(\Phi_{t}(\mathbf{a}),A)<\epsilon$ for all $t\geqslant 0$. A compact invariant set $A$ is called an \emph{attractor} if it is Lyapunov stable and has a positively invariant open neighbourhood $O$ such that every trajectory $\mathbf{x}_{\mathbf{a}}(\cdot)$ initiated inside $O$ converges uniformly to $A$ -- equivalently, dist$(\Phi_{t}(\mathbf{a}),A)=\text{dist}(\mathbf{x}_{\mathbf{a}}(t),A)\rightarrow 0$ as $t\rightarrow\infty$, uniformly in $\mathbf{a}\in O$. The neighbourhood $O$ is referred to as a \emph{fundamental neighbourhood} of $A$. We refer to Appendix B of \cite{borkar2008stochastic} as well as \S 5 of \cite{benaim2006dynamics} for these definitions. 
 
A generalization of ODEs that plays a crucial role in the analysis of the model described in \S\ref{subsec:model_growing_sample_size} is the notion of \emph{differential inclusions}. For definitions pertaining to differential inclusions, we refer to \cite{benaim2005stochastic}. We consider a \emph{closed, set-valued map} $F$, which implies that $F(\mathbf{x})\subset\mathbb{R}^{d}$ for each $\mathbf{x}\in\mathbb{R}^{d}$, and the graph $\{(\mathbf{x},\mathbf{y})\in\mathbb{R}^{d}\times\mathbb{R}^{d}:\mathbf{y}\in F(\mathbf{x})\}$ is a closed subset of $\mathbb{R}^{d}\times\mathbb{R}^{d}$. It is assumed that $F(\mathbf{x})$ is non-empty, compact and convex for each $\mathbf{x}\in\mathbb{R}^{d}$, and there exists $c>0$ such that $\sup\{||\mathbf{z}||:\mathbf{z}\in F(\mathbf{x})\}\leqslant c(1+||\mathbf{x}||)$ for all $\mathbf{x}\in\mathbb{R}^{d}$. A solution to the differential inclusion $\dot{\mathbf{x}}\in F(\mathbf{x})$, with initial value $\mathbf{a}\in\mathbb{R}^{d}$, is an absolutely continuous mapping $\mathbf{x}_{\mathbf{a}}:\mathbb{R}\rightarrow\mathbb{R}^{d}$ such that $\mathbf{x}_{\mathbf{a}}(0)=\mathbf{a}$ and $\dot{\mathbf{x}_{\mathbf{a}}}(t)\in F(\mathbf{x}_{\mathbf{a}}(t))$ for almost every $t\in\mathbb{R}$. This induces a set-valued dynamical system $\Phi=\{\Phi_{t}\}_{t\in\mathbb{R}}$, with $\Phi_{t}=\{\Phi_{t}(\mathbf{a}):\mathbf{a}\in\mathbb{R}^{d}\}$, where $\Phi_{t}(\mathbf{a})$ consists of $\mathbf{x}_{\mathbf{a}}(t)$ for all solutions $\mathbf{x}_{\mathbf{a}}(\cdot)$, for each $\mathbf{a}\in\mathbb{R}^{d}$ and each $t\in\mathbb{R}$. We let $\Phi(\mathbf{a})$ consist of all solutions $\mathbf{x}_{\mathbf{a}}(\cdot)$, for each $\mathbf{a}\in\mathbb{R}^{d}$. A set $A\subset\mathbb{R}^{d}$ is termed \emph{invariant} if for every $\mathbf{a}\in A$, there exists at least one solution $\mathbf{x}_{\mathbf{a}}(\cdot)$ such that $\mathbf{x}_{\mathbf{a}}(t)\in A$ for each $t\in\mathbb{R}$, and it is termed \emph{strongly positively invariant} if $\Phi_{t}(\mathbf{a})\subset A$ for all $t\geqslant 0$ and all $\mathbf{a}\in A$. 

We call a compact subset $A$ of $\mathbb{R}^{d}$ \emph{internally chain transitive} if for every $\epsilon>0$, every $T>0$, and all $\mathbf{a},\mathbf{b}\in A$, there exist $n\in\mathbb{N}$ and points $\mathbf{a}_{1},\ldots,\mathbf{a}_{n-1}\in A$, along with \emph{some} solution $\mathbf{x}_{\mathbf{a}_{i-1}}$ for each $i\in[n]$ (where we set $\mathbf{a}_{0}=\mathbf{a}$ and $\mathbf{a}_{n}=\mathbf{b}$), and times $t_{1},\ldots,t_{n}$ each of which exceeds $T$, such that 
\begin{enumerate*}
\item $\mathbf{x}_{\mathbf{a}_{i-1}}(s)\in A$ for all $0\leqslant s\leqslant t_{i}$ for each $i\in[n]$, and
\item $||\mathbf{x}_{\mathbf{a}_{i-1}}(t_{i})-\mathbf{a}_{i}||<\epsilon$ for all $i\in[n]$. 
\end{enumerate*}
Given a closed, invariant set $L$ corresponding to the differential inclusion $\dot{\mathbf{x}}\in F(\mathbf{x})$, let $\Phi_{t}^{L}(\mathbf{a})$ indicate the set of $\mathbf{x}_{\mathbf{a}}(t)$ for all solutions $\mathbf{x}_{\mathbf{a}}(\cdot)$ such that $\mathbf{x}_{\mathbf{a}}(\mathbb{R})\subset L$ (in other words, only those solutions $\mathbf{x}_{\mathbf{a}}(\cdot)$ that lie \emph{entirely} inside $L$), for each $\mathbf{a}\in L$. We indicate by $\Phi^{L}(\mathbf{a})$ the collection of all such solutions (i.e.\ $\Phi^{L}(\mathbf{a})=\{\mathbf{x}_{\mathbf{a}}(\cdot):\mathbf{x}_{\mathbf{a}}(\mathbb{R})\subset L\}$) initiated from $\mathbf{a}$, for each $\mathbf{a}\in L$. A compact subset $A$ of $L$ is called an \emph{attracting set} for $\Phi^{L}$ if there exists a neighbourhood $U$ of $A$ (in the topology induced on $L$) such that for all $\epsilon>0$, there exists $t_{\epsilon}>0$ such that $\Phi^{L}_{t}(U)=\bigcup_{\mathbf{a}\in U}\Phi^{L}_{t}(\mathbf{a})\subset N^{\epsilon}(A)$ for all $t\geqslant t_{\epsilon}$, where $N^{\epsilon}(A)=\{\mathbf{x}:\text{dist}(\mathbf{x},A)<\epsilon\}$. If, in addition, $A$ is invariant, then $A$ is termed an \emph{attractor} for $\Phi^{L}$, in which case the set $U$ is referred to as a \emph{fundamental neighbourhood} of $A$ for $\Phi^{L}$. 

The final definition to be included in this brief discussion is that of \emph{perturbed solutions} -- paths that are obtained as (deterministic or random) perturbations of solutions to the differential inclusion mentioned in the previous paragraph. A function $\mathbf{y}:[0,\infty)\rightarrow\mathbb{R}^{d}$ is called a \emph{perturbed solution} to the differential inclusion $\dot{\mathbf{x}}\in F(\mathbf{x})$ if 
\begin{enumerate*}
\item it is absolutely continuous,
\item there exists a locally integrable function $t\mapsto U(t)$ such that $\lim_{t\rightarrow\infty}\sup\{||\int_{t}^{t+v}U(s)ds||:0\leqslant v\leqslant T\}=0$ for each $T>0$, and
\item $\dot{\mathbf{y}}(t)-U(t)\in F^{\delta(t)}(\mathbf{y}(t))$ for almost every $t\geqslant 0$, for some function $\delta:[0,\infty)\rightarrow[0,\infty)$ with $\delta(t)\rightarrow 0$ as $t\rightarrow\infty$, where $F^{\delta}(\mathbf{x})=\{\mathbf{z}:\exists \mathbf{w} \text{ such that }||\mathbf{w}-\mathbf{x}||<\delta \text{ and dist}(\mathbf{z},F(\mathbf{w}))<\delta\}$.
\end{enumerate*}

We are now ready for the main content of \S\ref{sec:results_from_literature}, and we begin by stating a result that pertains to almost sure convergence of the iterates of a stochastic approximation process corresponding to an ODE:
\begin{theorem}\label{thm:borkar_a.s.}
Consider the stochastic approximation process $\{Z_{n}\}$, with $Z_{n}\in\mathbb{R}^{d}$, given by
\begin{equation}\label{sa_borkar_general}
Z_{n+1}=Z_{n}+a_{n+1}\left\{F(Z_{n})+\Delta M_{n+1}+\delta_{n}\right\} \text{ for }n\in\mathbb{N}_{0},
\end{equation}
with prescribed $Z_{0}$ and the following assumptions:
\begin{enumerate}[label=(B\arabic*), ref=B\arabic*]
\item \label{borkar_1} the map $F:\mathbb{R}^{d}\rightarrow\mathbb{R}^{d}$ is Lipschitz,
\item \label{borkar_2} the step-size sequence $\{a_{n}\}$, of positive scalars, satisfies $\sum_{n}a_{n}=\infty$ and $\sum_{n}a_{n}^{2}<\infty$,
\item \label{borkar_3} the sequence $\{\Delta M_{n+1}\}$ is a martingale difference sequence with respect to the filtration $\{\mathcal{F}_{n+1}\}$, where $\mathcal{F}_{n}$ is the $\sigma$-field consisting of all information about the process up to and including epoch $n$, and $\{\delta_{n}\}$ is a sequence of additional error terms adapted to $\{\mathcal{F}_{n}\}$, such that $\sum_{i=0}^{n-1}a_{i+1}(\Delta M_{i+1}+\delta_{i})$ converges almost surely as $n\rightarrow\infty$, 
\item \label{borkar_4} and $\sup\{||Z_{n}||:n\in\mathbb{N}_{0}\}<\infty$ almost surely, where $||\cdot||$ indicates the usual Euclidean norm in $\mathbb{R}^{d}$.
\end{enumerate}
Then, almost surely, $\{Z_{n}\}$ converges to a (possibly sample path dependent) compact connected internally chain transitive invariant set corresponding to the autonomous ODE $\dot{x}(t)=F(x(t))$ for $t\geqslant 0$.
\end{theorem}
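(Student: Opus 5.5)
The plan is to follow the classical ODE method of Borkar (Chapter 2 of \cite{borkar2008stochastic}). The only change is that the martingale noise $\Delta M_{n+1}$ is replaced by the combined perturbation $\xi_{n+1}=\Delta M_{n+1}+\delta_{n}$. Assumption \eqref{borkar_3} is formulated precisely so that $\xi$ plays the role that the martingale plays in Borkar's argument. There, square-integrability of the martingale is used only to show that $\sum a_{i+1}\Delta M_{i+1}$ converges almost surely. Here that convergence is assumed directly, so the martingale property itself is not needed beyond this.

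\textbf{Step 1: interpolation and the comparison trajectory.} Set $t_{0}=0$ and $t_{n}=\sum_{i=1}^{n}a_{i}$; by \eqref{borkar_2}, $t_{n}\to\infty$. Let $\bar{Z}(\cdot)$ be the piecewise linear interpolation with $\bar{Z}(t_{n})=Z_{n}$. For each $s\geqslant 0$, let $z^{s}(\cdot)$ solve $\dot{z}=F(z)$ on $[s,\infty)$ with $z^{s}(s)=\bar{Z}(s)$; this is well-posed by \eqref{borkar_1}. The goal is to show that, almost surely and for every $T>0$,
\begin{equation}
\lim_{s\to\infty}\sup_{t\in[s,s+T]}\|\bar{Z}(t)-z^{s}(t)\|=0,\nonumber
\end{equation}
that is, $\bar{Z}$ is an asymptotic pseudotrajectory of the semiflow.

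\textbf{Step 2: Gronwall estimate.} Fix $s=t_{n}$ and $m$ with $t_{m}\leqslant t_{n}+T$. Summing the recursion \eqref{sa_borkar_general} gives
\begin{equation}
Z_{m}=Z_{n}+\sum_{i=n}^{m-1}a_{i+1}F(Z_{i})+\sum_{i=n}^{m-1}a_{i+1}\xi_{i+1}.\nonumber
\end{equation}
Compare this with $z^{t_{n}}(t_{m})=Z_{n}+\int_{t_{n}}^{t_{m}}F(z^{t_{n}}(u))\,du$. The difference splits into three parts.
\begin{enumerate*}
\item A discretization error $\sum_{i}\int_{t_{i}}^{t_{i+1}}\{F(z^{t_{n}}(t_{i}))-F(z^{t_{n}}(u))\}du$. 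Because $z^{t_{n}}$ stays bounded on $[t_{n},t_{n}+T]$ (uniformly, by \eqref{borkar_4} and linear growth of $F$), $F$ is bounded along it and the integrand is $O(a_{i+1})$ by the Lipschitz property. This part is therefore $O(\sum_{i\geqslant n}a_{i+1}^{2})\to 0$.
\item A noise term $\sup_{n\leqslant m\leqslant m(n,T)}\|\sum_{i=n}^{m-1}a_{i+1}\xi_{i+1}\|$. This tends to $0$ almost surely, because the partial sums converge by \eqref{borkar_3} and are therefore Cauchy.
\item A Lipschitz term $L\sum_{i}a_{i+1}\|Z_{i}-z^{t_{n}}(t_{i})\|$.
\end{enumerate*}
The discrete Gronwall inequality then gives
\begin{equation}
\sup_{t_{m}\in[t_{n},t_{n}+T]}\|Z_{m}-z^{t_{n}}(t_{m})\|\leqslant e^{LT}\,(\text{discretization error}+\text{noise term})\to 0.\nonumber
\end{equation}
Between grid points, and for general $s\in[t_{n},t_{n+1}]$ rather than grid points, one extends the bound by linearity of the interpolation, boundedness of the increments (using $a_{n}\to 0$, which follows from \eqref{borkar_2}), and continuous dependence on initial conditions.

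\textbf{Step 3: identifying the limit set.} By \eqref{borkar_4}, $\bar{Z}$ is almost surely precompact, and Step 1 shows it is an asymptotic pseudotrajectory. The characterization of limit sets of bounded asymptotic pseudotrajectories (Benaïm; equivalently Borkar's Theorem 2 in Chapter 2) then gives that $\bigcap_{t}\overline{\bar{Z}([t,\infty))}$ is nonempty, compact, connected, invariant, and internally chain transitive. Connectedness comes from continuity of $\bar{Z}$ together with boundedness. Invariance follows by taking limits of the trajectories $z^{s}$ along subsequences, using continuity of the flow. Chain transitivity follows by chaining $\epsilon$-close shadows over windows of length $T$. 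Since $\operatorname{dist}(Z_{n},\cdot)$ to this limit set tends to $0$, the conclusion of the theorem follows.

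The main obstacle is Step 2: obtaining the uniform-in-window bound with only the convergence hypothesis on $\sum a_{i+1}\xi_{i+1}$, rather than moment bounds, and controlling the discretization term through the a.s.\ bound \eqref{borkar_4} instead of global boundedness of $F$. I would handle the latter with a path-wise constant $K(\omega)=\sup_{n}\|Z_{n}\|$. The Gronwall constants are then random but finite, and that suffices for almost sure convergence.
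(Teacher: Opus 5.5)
Your proposal is correct and follows the paper's route. The paper simply defers to Chapter~2 of \cite{borkar2008stochastic}, noting that (B3) is exactly what Lemma~1 there (the asymptotic-pseudotrajectory estimate, i.e.\ your Steps~1--2 with $\Delta M_{n+1}+\delta_{n}$ in place of the martingale noise) needs, after which Borkar's Theorem~2 (your Step~3) applies unchanged. One small correction: between grid points, the noise increments $a_{n+1}(\Delta M_{n+1}+\delta_{n})$ vanish because the series in (B3) converges, not because $a_{n}\to 0$, since the noise itself need not be bounded.
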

Note that Theorem~\ref{thm:borkar_a.s.} is a slight generalization of Theorem 2, Chapter 2 of \cite{borkar2008stochastic} in that, we consider two different error sequences, $\{\Delta M_{n+1}\}$ and $\{\delta_{n}\}$, and instead of bounding $\E[||\Delta M_{n+1}||^{2}|\mathcal{F}_{n}]$ individually for each $n$, we impose Assumption~\eqref{borkar_3}, since this ensures that Lemma 1, Chapter 2 of \cite{borkar2008stochastic} remains true. We may now state Theorem 2, Chapter 2 of \cite{borkar2008stochastic}, as a corollary of Theorem~\ref{thm:borkar_a.s.} (here, $\mathbf{0}$ indicates the tuple in $\mathbb{R}^{d}$ in which each coordinate equals $0$):
\begin{corollary}\label{cor:borkar_a.s.}
Consider the set-up described in Theorem~\ref{thm:borkar_a.s.}, with $\delta_{n}=\mathbf{0}$ for each $n\in\mathbb{N}_{0}$. Let Assumptions~\eqref{borkar_1}, \eqref{borkar_2} and \eqref{borkar_4} be satisfied, and let there exist $K>0$ such that $\E[||\Delta M_{n+1}||^{2}|\mathcal{F}_{n}]\leqslant K(1+||Z_{n}||^{2})$ almost surely for each $n\in\mathbb{N}_{0}$. Then, the same conclusion as drawn in the statement of Theorem~\ref{thm:borkar_a.s.} remains true.
\end{corollary}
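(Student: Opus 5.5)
The corollary should follow directly from Theorem~\ref{thm:borkar_a.s.}. Assumptions~\eqref{borkar_1}, \eqref{borkar_2} and \eqref{borkar_4} are granted, so the plan is to show that assumption~\eqref{borkar_3} holds when $\delta_{n}=\mathbf{0}$. Since $\{\delta_{n}\}$ is trivially adapted, this reduces to proving that the martingale $\zeta_{n}=\sum_{i=0}^{n-1}a_{i+1}\Delta M_{i+1}$ converges almost surely. Once that is done, the conclusion of Theorem~\ref{thm:borkar_a.s.} gives the conclusion of the corollary.

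To show $\zeta_{n}$ converges, I would use a localisation argument, as in Lemma~1, Chapter~2 of \cite{borkar2008stochastic}. Fix $R>0$ and define the stopping time $\tau_{R}=\inf\{n:||Z_{n}||>R\}$. Consider the stopped martingale $\zeta_{n}^{R}=\sum_{i=0}^{n-1}a_{i+1}\Delta M_{i+1}\chi\{i<\tau_{R}\}$. The event $\{i<\tau_{R}\}$ is $\mathcal{F}_{i}$-measurable, so $\{\zeta_{n}^{R}\}$ is again a martingale with respect to $\{\mathcal{F}_{n}\}$. By the conditional second-moment hypothesis, its quadratic variation satisfies
\begin{equation}
\sum_{i\geqslant 0}a_{i+1}^{2}\E\left[||\Delta M_{i+1}||^{2}\chi\{i<\tau_{R}\}\big|\mathcal{F}_{i}\right]\leqslant K(1+R^{2})\sum_{i\geqslant 0}a_{i+1}^{2}<\infty,\nonumber
\end{equation}
and this bound is finite by \eqref{borkar_2}. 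Hence $\{\zeta_{n}^{R}\}$ is bounded in $L^{2}$. Each coordinate then converges almost surely by the $L^{2}$ martingale convergence theorem; alternatively, one can use the result that a square-integrable martingale converges on the event where its predictable quadratic variation is finite.

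The main point needing care is removing the truncation. By \eqref{borkar_4}, $\sup_{n}||Z_{n}||<\infty$ almost surely, so $\Omega$ is, up to a null set, the union over $R\in\mathbb{N}$ of the events $\{\tau_{R}=\infty\}$. On $\{\tau_{R}=\infty\}$ we have $\zeta_{n}^{R}=\zeta_{n}$ for every $n$, so $\zeta_{n}$ converges almost surely on each such event. Taking the countable union over $R\in\mathbb{N}$ shows that $\zeta_{n}$ converges almost surely.

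This verifies \eqref{borkar_3}, and Theorem~\ref{thm:borkar_a.s.} then yields the claim.
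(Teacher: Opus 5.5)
Your proposal is correct and follows essentially the paper's route. The paper simply notes that the corollary is Theorem 2, Chapter 2 of \cite{borkar2008stochastic}, obtained from Theorem~\ref{thm:borkar_a.s.} because the conditional second-moment bound, square-summable steps and \eqref{borkar_4} give almost sure convergence of $\sum_{i}a_{i+1}\Delta M_{i+1}$, i.e.\ \eqref{borkar_3} with $\delta_{n}=\mathbf{0}$; your stopping-time localisation is a careful way of carrying out that martingale-convergence step without needing $\E[||Z_{n}||^{2}]<\infty$.
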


We now state a modification to Theorem 7.1 of \cite{borkar2008stochastic}, since it helps us examine the convergence of suitably scaled fluctuations of the stochastic process resulting from the model in \S\ref{subsec:model_fixed_sample_size} around the trajectory dictated by the corresponding autonomous ODE.
\begin{theorem}\label{thm:borkar_fclt}
Consider the stochastic approximation process in \eqref{sa_borkar_general}, with $\delta_{n}=\mathbf{0}$ for each $n$. In addition to Assumptions~\eqref{borkar_1}, \eqref{borkar_2} and \eqref{borkar_4}, we assume  
\begin{enumerate}[label=(B'\arabic*), ref=B'\arabic*]
\item \label{borkar_5} that $F$ is continuously differentiable, and its Jacobian $J_{F}$ is uniformly Lipschitz,
\item \label{borkar_6} that $\alpha:=\lim_{n\rightarrow\infty}\{a_{n+1}^{-1}-a_{n}^{-1}\}$ exists and is finite,
\item \label{borkar_7} that $\sup_{n}\E[||Z_{n}||^{4}]<\infty$,
\item \label{borkar_8} that for some $K>0$, and some $Q:\mathbb{R}^{d}\rightarrow\mathbb{R}^{d\times d}$ such that $Q(\mathbf{x})$ is a non-negative definite symmetric matrix for each $\mathbf{x}\in\mathbb{R}^{d}$, we have
\begin{equation}
\E\left[\Delta M_{n+1}\Delta M_{n+1}^{T}\big|\mathcal{F}_{n}\right]=Q(Z_{n}) \text{ and } \E\left[\left|\left|\Delta M_{n+1}\right|\right|^{4}\big|\mathcal{F}_{n}\right]\leqslant K\left(1+||Z_{n}||^{4}\right),\nonumber
\end{equation}
where $A^{T}$, for any matrix $A$ with real entries, indicates its transpose.
\end{enumerate}
Fix \emph{any} $T>0$. Writing $t_{0}=0$ and $t_{n}=\sum_{i\in[n]}a_{i}$ for $n\in\mathbb{N}$, we set $m_{n}=\min\{m\geqslant n: t_{m}\geqslant t_{n}+T\}$ for each $n\in\mathbb{N}$. For each fixed $n\in\mathbb{N}$, let $\{Z^{n}(t): t\geqslant t_{n}\}$ indicate the solution to
\begin{equation}
\dot{\mathbf{x}}(t)=F(\mathbf{x}(t)) \text{ for all }t\geqslant t_{n}, \quad \text{with} \quad \mathbf{x}(t_{n})=Z_{n},\nonumber
\end{equation}
and let $W_{j}=a_{j+1}^{-1/2}\{Z_{j}-Z^{n}(t_{j})\}$ for each $j\geqslant n$. We now define the process $\{W^{n}(t):t\in [t_{n},t_{n}+T]\}$: 
\begin{equation}
W^{n}(t)=W_{j}+\frac{t-t_{j}}{t_{j+1}-t_{j}}(W_{j+1}-W_{j}) \text{ for }t\in[t_{j},t_{j+1}], \text{ for each }j\in\{n,n+1,\ldots,m_{n}-1\}.\nonumber  
\end{equation}  
Finally, we set $\tilde{W}^{n}(t)=W^{n}(t_{n}+t)$ for all $t\in[0,T]$. Likewise, we define $\tilde{Z}^{n}(t)=Z^{n}(t_{n}+t)$ for all $t\in[0,T]$. Then the laws of the processes $\{(\tilde{Z}^{n}(\cdot),\tilde{W}^{n}(\cdot))\}_{n\in\mathbb{N}}$ are relatively compact in the space $\mathcal{P}(C([0,T];\mathbb{R}^{d}))^{2}$, where $C([0,T];\mathbb{R}^{d})$ is the set of all continuous functions from $[0,T]$ to $\mathbb{R}^{d}$ and $\mathcal{P}(C([0,T];\mathbb{R}^{d}))^{2}$ is the space of all probability measures on $C([0,T];\mathbb{R}^{d})\times C([0,T];\mathbb{R}^{d})$.

If a subsequence of $\{(\tilde{Z}^{n}(\cdot),\tilde{W}^{n}(\cdot))\}_{n\in\mathbb{N}}$ converges in law to a limiting process $\{z^{*}(\cdot),w^{*}(\cdot)\}$, then $z^{*}(\cdot)$ is a solution to the ODE $\dot{\mathbf{x}}(t)=F(\mathbf{x}(t))$ for $t\geqslant 0$, belonging to a compact, connected, internally chain transitive invariant set corresponding to this ODE, and each of
\begin{align}
&w^{*}(t)-\int_{0}^{t}\left(J_{F}\left(z^{*}(s)\right)+\frac{\alpha}{2}I_{d}\right)w^{*}(s)ds \quad \text{and}\nonumber\\
&\left[w^{*}(t)-\int_{0}^{t}\left(J_{F}\left(z^{*}(s)\right)+\frac{\alpha}{2}I_{d}\right)w^{*}(s)ds\right]\left[w^{*}(t)-\int_{0}^{t}\left(J_{F}\left(z^{*}(s)\right)+\frac{\alpha}{2}I_{d}\right)w^{*}(s)ds\right]^{T}-\int_{0}^{t}Q\left(z^{*}(s)\right)ds,\nonumber
\end{align}
is a martingale for $t\in[0,T]$, where $I_{d}$ indicates the $d$-dimensional identity matrix.
\end{theorem}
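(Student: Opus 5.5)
The plan is to follow the architecture of the proof of Theorem 7.1, Chapter 7 of \cite{borkar2008stochastic}. The work splits into three parts: a linearised recursion for $W_{j}$, uniform moment bounds that make this recursion usable, and a martingale-problem identification of limit points.

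\textbf{Step 1: the recursion for $W_{j}$.} Fix $n$. Integrate the ODE over $[t_{j},t_{j+1}]$, of length $a_{j+1}$, and use the Lipschitz property \eqref{borkar_1} together with \eqref{borkar_5}. This gives
\[
Z^{n}(t_{j+1})=Z^{n}(t_{j})+a_{j+1}F\left(Z^{n}(t_{j})\right)+O(a_{j+1}^{2}).
\]
Subtract this from \eqref{sa_borkar_general} (with $\delta_{n}=\mathbf{0}$) and Taylor-expand $F$ around $Z^{n}(t_{j})$, using the uniformly Lipschitz Jacobian. This yields
\[
Z_{j+1}-Z^{n}(t_{j+1})=\left\{I_{d}+a_{j+1}J_{F}\left(Z^{n}(t_{j})\right)\right\}\left\{Z_{j}-Z^{n}(t_{j})\right\}+a_{j+1}\Delta M_{j+1}+O\left(a_{j+1}\|Z_{j}-Z^{n}(t_{j})\|^{2}+a_{j+1}^{2}\right).
\]
Multiply by $a_{j+2}^{-1/2}$ and write $a_{j+2}^{-1/2}=a_{j+1}^{-1/2}(a_{j+1}/a_{j+2})^{1/2}$. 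Assumption \eqref{borkar_6} gives $(a_{j+1}/a_{j+2})^{1/2}=1+\frac{\alpha}{2}a_{j+1}+o(a_{j+1})$, and we arrive at
\[
W_{j+1}=W_{j}+a_{j+1}\left\{J_{F}\left(Z^{n}(t_{j})\right)+\tfrac{\alpha}{2}I_{d}\right\}W_{j}+\sqrt{a_{j+1}}\,\Delta M_{j+1}+\varepsilon_{j},
\]
where $\varepsilon_{j}=O\left(\sqrt{a_{j+1}}\|Z_{j}-Z^{n}(t_{j})\|^{2}+a_{j+1}^{3/2}+o(a_{j+1})\|W_{j}\|\right)$.

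\textbf{Step 2: fourth-moment bounds.} The goal is
\[
\sup_{n\leqslant j\leqslant m_{n}}\E\|Z_{j}-Z^{n}(t_{j})\|^{4}=O(a_{n}^{2}),\qquad\text{equivalently}\qquad \sup_{n}\sup_{n\leqslant j\leqslant m_{n}}\E\|W_{j}\|^{4}<\infty.
\]
I would obtain this from the unscaled error recursion by a discrete Gronwall inequality over the window $t_{j}-t_{n}\leqslant T+1$. The martingale part $\sum_{i}a_{i+1}\Delta M_{i+1}$ is controlled by Burkholder's inequality, which needs the conditional fourth moment bound in \eqref{borkar_8}, and hence $\sup_{n}\E\|Z_{n}\|^{4}<\infty$ from \eqref{borkar_7}. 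Since $a_{j}/a_{n}$ is bounded over a window under \eqref{borkar_6}, this bound makes $\sum_{j}\E\|\varepsilon_{j}\|\to0$ uniformly over the window. I expect this step to be the main obstacle: the quadratic Taylor term has to be handled inside Gronwall, and all constants must be uniform in $n$.

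\textbf{Step 3: tightness and identification of limits.} Tightness of $\tilde{Z}^{n}$ is immediate: by \eqref{borkar_4} and \eqref{borkar_1} these are solutions of a Lipschitz ODE started in a bounded set, hence uniformly bounded and equicontinuous. For $\tilde{W}^{n}$, apply the Step 2 bounds to the recursion of Step 1 to get
\[
\E\|\tilde{W}^{n}(t)-\tilde{W}^{n}(s)\|^{4}\leqslant C|t-s|^{2}\quad\text{for } |t-s|\geqslant a_{n},
\]
and conclude tightness by Kolmogorov's criterion, after handling the linear interpolation on the sub-mesh scale separately. Along a convergent subsequence:
\begin{enumerate*}
\item $z^{*}$ solves the ODE, since the ODE is well-posed and the flow is continuous in the initial condition;
\item $z^{*}$ lies in a compact connected internally chain transitive invariant set, because by Theorem~\ref{thm:borkar_a.s.} the $Z_{n}$ converge a.s.\ to such a set and $Z^{n}$ starts at $Z_{n}$;
\item summing the recursion of Step 1 shows that
\[
\tilde{W}^{n}(t)-\int_{0}^{t}\left\{J_{F}\left(\tilde{Z}^{n}(s)\right)+\tfrac{\alpha}{2}I_{d}\right\}\tilde{W}^{n}(s)\,ds
\]
equals $\sum_{j}\sqrt{a_{j+1}}\,\Delta M_{j+1}$ plus errors vanishing in $L^{1}$;
\item this martingale has predictable quadratic variation $\sum_{j}a_{j+1}Q(Z_{j})$, which converges to $\int_{0}^{t}Q(z^{*}(s))\,ds$, using continuity of $Q$ (which I will assume, as in the applications) and $Z_{j}-Z^{n}(t_{j})\to0$.
\end{enumerate*}
Finally, test both martingale properties against bounded continuous functionals of the path up to time $s$. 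The fourth-moment bounds give the uniform integrability needed to pass to the limit, so both expressions in the statement are martingales.
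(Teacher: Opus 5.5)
Your proposal follows the same architecture as the paper, whose proof simply defers to Chapter 7 of \cite{borkar2008stochastic} (relative compactness as in \S 7.2 up to Lemma 7.5, identification of $w^{*}$ as in \S 7.3 up to but excluding (7.14)), and your explicit assumption that $Q$ is continuous is a genuine requirement that the statement omits: Borkar assumes $Q$ Lipschitz, \eqref{borkar_8} only constrains $Q$ along the iterates, so without some regularity the compensator cannot be identified as $\int_{0}^{t}Q(z^{*}(s))\,ds$, and continuity does hold for the $Q$ arising in Theorem~\ref{thm:main_6}. One small repair: in the fourth-moment increment bound for $\tilde{W}^{n}$, bound the linearisation remainder by $O(a_{j+1}\|W_{j}\|+a_{j+1}^{3/2})$ using the Lipschitz property of $F$, not by your quadratic bound $\sqrt{a_{j+1}}\|Z_{j}-Z^{n}(t_{j})\|^{2}$, which would need eighth moments that the hypotheses do not supply; the quadratic bound is only needed in $L^{1}$, for the identification step.
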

Although this theorem differs from Theorem 7.1 of \cite{borkar2008stochastic} in that 
\begin{enumerate*} 
\item Assumption~\eqref{borkar_8} is not quite Assumption (A4), Chapter 7 of \cite{borkar2008stochastic}, 
\item and we can no longer write the limiting process $w^{*}(\cdot)$ as the solution to a stochastic differential equation (such as Equation (7.14), Chapter 7 of \cite{borkar2008stochastic}),
\end{enumerate*}
the proof of Theorem~\ref{thm:borkar_fclt} follows the same line of argument as that outlined in Chapter 7 of \cite{borkar2008stochastic} (more specifically, the proof of relative compactness of $\{(\tilde{Z}^{n}(\cdot),\tilde{W}^{n}(\cdot))\}_{n\in\mathbb{N}}$ is accomplished by following the line of reasoning presented in \S 7.2 of \cite{borkar2008stochastic}, culminating in Lemma 7.5, and the characterization of $w^{*}(\cdot)$ is deduced exactly as has been shown in \S 7.3 of \cite{borkar2008stochastic}, up to but not including Equation (7.14)). Our inability to express the limiting process $w^{*}(\cdot)$, resulting from the model described in \S\ref{subsec:model_fixed_sample_size}, stems from the fact that, when it comes to the corresponding covariance matrix function $Q:\mathcal{S}\rightarrow\mathbb{R}^{3\times 3}$ (with $\mathcal{S}$ as defined in \eqref{domain_defn}), the rank of $Q(a,b,c)$, for $(a,b,c)\in\mathcal{S}$, does not remain constant throughout $\mathcal{S}$. It is immediate that whenever $a=b=0$, at least one eigenvalue of $Q(a,b,c)$ equals $0$, making it positive-semidefinite instead of positive definite. This prevents us from being able to apply Theorem 5.2.2 of \cite{stroock2007multidimensional} to our set-up to claim the existence of a unique function $D:\mathcal{S}\rightarrow\mathbb{R}^{3\times 3}$ such that 
\begin{enumerate*}
\item $D(a,b,c)$ is symmetric and non-negative definite for each $(a,b,c)\in\mathcal{S}$,
\item $D(a,b,c)^{2}=Q(a,b,c)$ for each $(a,b,c)\in\mathcal{S}$,
\item and $D$ is Lipschitz on $\mathcal{S}$, or, equivalently, there exists some constant $M>0$ such that the operator norm $||D(a_{1},b_{1},c_{1})-D(a_{2},b_{2},c_{2})||\leqslant M||(a_{1},b_{1},c_{1})-(a_{2},b_{2},c_{2})||$ for all $(a_{1},b_{1},c_{1}), (a_{2},b_{2},c_{2})\in\mathcal{S}$.
\end{enumerate*}
Ensuring that $D$ is Lipschitz remains crucial for the stochastic differential equation in Equation (7.14) of \cite{borkar2008stochastic} to be well-posed. Note that one could possibly replace the function $D$ by the Cholesky decomposition factor $L$ of $Q$, i.e.,\ by a function $L:\mathcal{S}\rightarrow\mathbb{R}^{3\times 3}$ such that 
\begin{enumerate*}
\item $L(a,b,c)$ is lower-triangular for each $(a,b,c)\in\mathcal{S}$,
\item and $L(a,b,c)L(a,b,c)^{T}=Q(a,b,c)$ for each $(a,b,c)\in\mathcal{S}$ --
\end{enumerate*}
however, in our set-up, while a Cholesky decomposition factor can be computed explicitly with a fairly neat expression, it is easily seen to be \emph{not} Lipschitz on $\mathcal{S}$.

A final means for expressing our $w^{*}(\cdot)$ in a form analogous to Equation (7.14) of \cite{borkar2008stochastic} would have been to resort to Theorem 5.2.3 of \cite{stroock2007multidimensional}, which would require an extension $\tilde{Q}$ of our function $Q$ from $\mathcal{S}$ to all of $\mathbb{R}^{3}$ such that 
\begin{enumerate*}
\item $\tilde{Q}(a,b,c)$ is a non-negative definite symmetric matrix for each $(a,b,c)\in\mathbb{R}^{3}$ with $\tilde{Q}|_{\mathcal{S}}\equiv Q$,
\item $\tilde{Q}$ has continuous second-order partial derivatives throughout $\mathbb{R}^{3}$,
\item and each second-order partial derivative of $Q$ is bounded throughout $\mathbb{R}^{3}$.
\end{enumerate*}
While Whitney extension results (see, for instance, \S 2.3, Chapter VI of \cite{stein1970singular}) allow us to extend $Q$ from $\mathcal{S}$ to $\tilde{Q}$ on $\mathbb{R}^{3}$ that respects the criteria pertaining to second-order partial derivatives, we can, in no way, keep $\tilde{Q}(a,b,c)$ non-negative definite for each $(a,b,c)\in\mathbb{R}^{3}$, thus preventing us from being able to apply Theorem 5.2.3 of \cite{stroock2007multidimensional} to obtain a Lipschitz square root of $\tilde{Q}$ (and hence, of $Q$).

The next theorem that we state finds applications in the analysis of the model described in \S\ref{subsec:model_growing_sample_size}, and is obtained by combining Proposition 1.3 and Theorem 3.6 of \cite{benaim2005stochastic}. Here, given any subset $A$ of $\mathbb{R}^{d}$, we let $\overline{A}$ indicate its closure.
\begin{theorem}\label{thm:differential_inclusions}
Consider the stochastic approximation process $\{Z_{n}\}$, 
\begin{equation}\label{sa_differential_inclusion} 
Z_{n+1}-Z_{n}-a_{n+1}E_{n+1}\in a_{n+1}F(Z_{n}),\text{ for each }n\in\mathbb{N}_{0},
\end{equation} 
satisfying the following conditions: 
\begin{enumerate}[label=(C\arabic*), ref=C\arabic*]
\item \label{C1} the iterates are almost surely bounded, i.e.\ $\sup_{n}||Z_{n}||<\infty$ almost surely,
\item \label{C2} $F$ is a closed set-valued map on $\mathbb{R}^{d}$, with $F(\mathbf{x})$ a non-empty, compact, convex subset of $\mathbb{R}^{d}$ for each $\mathbf{x}\in\mathbb{R}^{d}$, 
\item \label{C3} $\sup\{||\mathbf{z}||:\mathbf{z}\in F(\mathbf{x})\}\leqslant \gamma(1+||\mathbf{x}||)$ for all $\mathbf{x}\in\mathbb{R}^{d}$, for some $\gamma>0$
\item \label{C4} the step-size sequence $\{a_{n}\}$ is such that $\sum_{n}a_{n}=\infty$ and $\lim_{n\rightarrow\infty}a_{n}=0$, 
\item \label{C5} and having defined $\tau_{0}=0$, $\tau_{n}=\sum_{i\in[n]}a_{i}$ for $n\in\mathbb{N}$, and $m(t)=\sup\{k\geqslant 0:t\geqslant \tau_{k}\}$ for $t>0$,
\begin{equation}\label{error_sequence_cond_differential_inclusions}
\lim_{n\rightarrow\infty}\sup\left\{\left|\left|\sum_{i=n}^{k-1}a_{i+1}E_{i+1}\right|\right|:k\in\left\{n+1,\ldots,m(\tau_{n}+T)\right\}\right\}=0 \text{ almost surely for each }T>0.
\end{equation}
\end{enumerate}
Then the continuous time affine interpolated process $\{Z(t):t\geqslant 0\}$, defined as
\begin{equation}
Z(\tau_{n}+s)=Z_{n}+\frac{s(Z_{n+1}-Z_{n})}{\tau_{n+1}-\tau_{n}} \text{ for each }s\in[0,a_{n+1}),\label{interpolated_process}
\end{equation}
is a perturbed solution to the differential inclusion $\dot{\mathbf{x}}(t)\in F(\mathbf{x}(t))$ for $t\geqslant 0$, and its limit set $L(Z)=\bigcap_{t\geqslant 0}\overline{\{Z(s):s\geqslant t\}}$ is internally chain transitive.
\end{theorem}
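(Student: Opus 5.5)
The statement is a transcription of Proposition 1.3 and Theorem 3.6 of \cite{benaim2005stochastic}, so the plan has two parts. First, verify directly that the affine interpolation \eqref{interpolated_process} satisfies the three defining properties of a perturbed solution listed in \S\ref{sec:results_from_literature}. Second, invoke Theorem 3.6 of \cite{benaim2005stochastic}, which asserts that the limit set of a \emph{bounded} perturbed solution of a differential inclusion satisfying the standing hypotheses \eqref{C2}--\eqref{C3} is internally chain transitive.

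For the first part, write $Z_{n+1}-Z_{n}=a_{n+1}(E_{n+1}+y_{n})$ with $y_{n}\in F(Z_{n})$, as permitted by \eqref{sa_differential_inclusion}. Introduce the piecewise constant processes $\bar{Z}(t)=Z_{n}$ and $U(t)=E_{n+1}$ for $t\in[\tau_{n},\tau_{n+1})$.

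Since $Z(\cdot)$ is piecewise linear, it is absolutely continuous. On each $(\tau_{n},\tau_{n+1})$ it satisfies
\[
\dot{Z}(t)=\frac{Z_{n+1}-Z_{n}}{a_{n+1}}=E_{n+1}+y_{n},
\]
so $\dot{Z}(t)-U(t)\in F(\bar{Z}(t))$ for almost every $t$. To convert this into $\dot{Z}(t)-U(t)\in F^{\delta(t)}(Z(t))$, take $\delta(t)$ slightly larger than $\|Z(t)-\bar{Z}(t)\|$; this is admissible because $w=\bar{Z}(t)$ then lies within $\delta(t)$ of $Z(t)$ and the distance to $F(w)$ is $0$. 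It remains to check $\delta(t)\to 0$. We have
\[
\|Z(t)-\bar{Z}(t)\|\leqslant\|Z_{n+1}-Z_{n}\|\leqslant a_{n+1}\|E_{n+1}\|+a_{n+1}\gamma(1+\|Z_{n}\|)
\]
by \eqref{C3}. The second term tends to $0$ almost surely by \eqref{C1} and \eqref{C4}. The first term tends to $0$ by applying \eqref{error_sequence_cond_differential_inclusions} with $k=n+1$. This choice of $k$ is legitimate because $a_{n}\to 0$ forces $m(\tau_{n}+T)\geqslant n+1$ for all large $n$.

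The integral condition on $U$ is the main bookkeeping step. For $t\in[\tau_{n},\tau_{n+1})$ and $0\leqslant v\leqslant T$, the integral $\int_{t}^{t+v}U(s)\,ds$ decomposes into three pieces:
\begin{enumerate*}
\item a partial initial cell, bounded by $a_{n+1}\|E_{n+1}\|$;
\item a full block $\sum_{i=n+1}^{k-1}a_{i+1}E_{i+1}$ with $k\leqslant m(\tau_{n+1}+T)$;
\item a partial final cell, bounded by $a_{k+1}\|E_{k+1}\|$.
\end{enumerate*}
The two partial cells vanish uniformly as $t\to\infty$ by the observation $a_{n+1}\|E_{n+1}\|\to 0$ made above. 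The full block is controlled by \eqref{error_sequence_cond_differential_inclusions}, applied with $n+1$ in place of $n$. The point needing care is that the upper index must stay within the window $m(\tau_{n+1}+T)$ appearing in \eqref{C5}. This holds because $\tau_{k}\leqslant t+v\leqslant\tau_{n+1}+T$ for every full cell contained in the interval. Hence $\sup_{0\leqslant v\leqslant T}\|\int_{t}^{t+v}U\|\to 0$ almost surely.

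Finally, \eqref{C1} gives $\sup_{t}\|Z(t)\|<\infty$, since $Z(t)$ is a convex combination of consecutive iterates. Together with \eqref{C2}--\eqref{C3}, this places us exactly in the setting of Theorem 3.6 of \cite{benaim2005stochastic}, which yields that $L(Z)$ is internally chain transitive. I expect the only real obstacle to be the index matching between the continuous window $[t,t+T]$ and the discrete window $\{n+1,\ldots,m(\tau_{n}+T)\}$ in \eqref{C5}, including the boundary cells. This is handled precisely by the observation that the single-step errors $a_{n+1}E_{n+1}$ tend to zero.
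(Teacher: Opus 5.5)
Your proposal is correct and takes essentially the paper's route. The paper obtains this statement directly by combining Proposition 1.3 of \cite{benaim2005stochastic} (the affine interpolation is a perturbed solution) with Theorem 3.6 there (the limit set of a bounded perturbed solution is internally chain transitive). You re-derive Proposition 1.3 instead of citing it, and your derivation (piecewise-constant $U$ and $\bar{Z}$, the bound $\|Z(t)-\bar{Z}(t)\|\leqslant\|Z_{n+1}-Z_{n}\|$, and the index matching through $m(\tau_{n+1}+T)$ together with $a_{n+1}\|E_{n+1}\|\to 0$) is essentially the argument given there.
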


For the model in \S\ref{subsec:model_fixed_sample_size}, the iterates $(n^{-1}A_{n},n^{-1}B_{n},n^{-1}C_{n})$, for $n\geqslant N$, come from the compact set $\mathcal{S}$ defined in \eqref{domain_defn}, while for the model in \S\ref{subsec:model_growing_sample_size}, the iterates $(n^{-1}A_{n},s_{n}^{-1}B_{n},s_{n}^{-1}C_{n})$, for $n\geqslant N$, come from the compact set $\hat{\mathcal{S}}$ defined in \eqref{domain_defn_growing_size}. Therefore, Assumption~\eqref{borkar_4} of Theorem~\ref{thm:borkar_a.s.} as well as Assumption~\eqref{C1} of Theorem~\ref{thm:differential_inclusions} is automatically satisfied, and we need not verify these again in either of \S\ref{sec:proofs_fixed_sample_size} and \S\ref{sec:proofs_growing_sample_size}. 

We would like to end \S\ref{sec:results_from_literature} with a brief discussion on weak convergence results -- in particular, convergence in distribution -- for stochastic approximation processes, that are available in the literature so far. A vast majority of such results (see, for instance, Theorem 2.2.12 of \cite{duflo2013random}, Theorems 2.1, 2.2 and 2.3 of \cite{zhang2016central}, among others) assume that the iterates of the stochastic approximation process under consideration converge almost surely to a single equilibrium point, also known as an \emph{attractive target}. A handful of results, such as those proposed in \cite{pelletier1998weak} and \cite{pelletier1999almost}, extend the analysis to scenarios where multiple such attractive targets may exist -- given an attractive target $z^{*}$ such that the probability of the event $\Gamma(z^{*})$ is strictly positive, where $\{Z_{n}\}$ forms the sequence of iterates of the stochastic approximation process described in \eqref{sa_borkar_general} and $\Gamma(z^{*})=\{\omega:Z_{n}(\omega)\rightarrow z^{*}\}$, convergence in distribution of a suitably scaled version of $(Z_{n}-z^{*})$, on the event $\Gamma(z^{*})$, to the stationary distribution of a diffusion process has been established. While such results may well be applicable to our set-up, they are unable to capture the full picture, as 
\begin{enumerate*}
\item it is difficult to specify the exact internal structure of an internally chain transitive invariant set in general, and 
\item such a set may contain structures a lot more complicated than individual equilibrium points.
\end{enumerate*}


\section{Proofs of the results from \S\ref{subsec:main_results_fixed_sample_size}}\label{sec:proofs_fixed_sample_size}
This section is dedicated to the proofs of the results, stated in \S\ref{subsec:main_results_fixed_sample_size}, pertaining to the model described in \S\ref{subsec:model_fixed_sample_size}, i.e.\ where the size of the sample, $k$, remains fixed as $n$ varies. Representation of the model in \S\ref{subsec:model_fixed_sample_size} as a stochastic approximation process proves crucial for carrying out our analysis. Recall the definitions of $A_{n}$, $B_{n}$ and $C_{n}$ from the beginning of \S\ref{sec:main_results}. Recall, also, that in a directed graph, the sum of the in-degrees of all vertices is equal to the sum of the out-degrees of all vertices. For the model in \S\ref{subsec:model_fixed_sample_size}, the vertices $v_{1},\ldots,v_{N}$ have no out-degree in $G_{n}$, while $v_{j}$, for each $j\in\{N+1,\ldots,n\}$, has out-degree equal to $P_{j-1}$ if $X_{j}=+1$, and equal to $(k-P_{j-1})$ if $X_{j}=-1$. We thus have 
\begin{align}
C_{n}={}&\sum_{i=N+1}^{n}P_{i-1}\chi\left\{X_{i}=+1\right\}+\sum_{i=N+1}^{n}\left(k-P_{i-1}\right)\chi\left\{X_{i}=-1\right\}=\sum_{i=N+1}^{n}P_{i-1}X_{i}+k\sum_{i=N+1}^{n}\chi\left\{X_{i}=-1\right\}.\label{C_{n}_defn_fixed_size}
\end{align}
The stochastic approximation will be written for the sequence $\{(n^{-1}A_{n},n^{-1}B_{n},n^{-1}C_{n})\}$, for $n\geqslant N$. 

From \eqref{sampling_probability_proportional_to_indegree} and \eqref{P_{n}_defn}, it is evident that $P_{n}$, conditioned on $\mathcal{F}_{n}$, follows a binomial distribution in which the total number of trials is equal to $k$, and the probability of success is given by (using the definition of $q_{\beta}$ from \eqref{q_defn}):
\begin{align}
\sum_{m\in[n]}\frac{\indeg_{n}(v_{m})+\beta}{\sum_{j\in[n]}\indeg_{n}(v_{j})+n\beta}\chi\left\{X_{m}=+1\right\}=\frac{B_{n}+\beta A_{n}}{C_{n}+n\beta}=q_{\beta}\left(\frac{A_{n}}{n},\frac{B_{n}}{n},\frac{C_{n}}{n}\right),\label{success_probab_P_{n}_distribution}
\end{align}
where $C_{n}$ is as given by \eqref{C_{n}_defn_fixed_size}. This observation, along with \eqref{X_{n+1}_conditional_distribution}, yields
\begin{align}
{}&\E\left[A_{n+1}-A_{n}\big|\mathcal{F}_{n}\right]=\sum_{i=0}^{k}\Prob\left[X_{n+1}=+1\big|P_{n}=i,\mathcal{F}_{n}\right]\Prob\left[P_{n}=i\big|\mathcal{F}_{n}\right]=H_{1}\left(\frac{A_{n}}{n},\frac{B_{n}}{n},\frac{C_{n}}{n}\right),\label{A_{n+1}-A_{n}_cond_exp_fixed_size}
\end{align}
where $H_{1}$ is the function defined in \eqref{H_{1}_defn}.

To find the conditional expectation of $B_{n+1}-B_{n}$, conditioned on $\mathcal{F}_{n}$, we note that this difference is strictly positive, and in fact, equal to $P_{n}$, if and only if $X_{n+1}=+1$. Therefore,
\begin{align}
\E\left[B_{n+1}-B_{n}\big|\mathcal{F}_{n}\right]=\E\left[P_{n}\chi\left\{X_{n+1}=+1\right\}\big|\mathcal{F}_{n}\right]=H_{2}\left(\frac{A_{n}}{n},\frac{B_{n}}{n},\frac{C_{n}}{n}\right),\label{B_{n+1}-B_{n}_cond_exp_fixed_size}
\end{align}
where $H_{2}$ is the function defined in \eqref{H_{2}_defn}.

Finally, from \eqref{X_{n+1}_conditional_distribution}, \eqref{C_{n}_defn_fixed_size} and \eqref{success_probab_P_{n}_distribution}, we have:
\begin{align}
\Prob\left[C_{n+1}-C_{n}=i\big|\mathcal{F}_{n}\right]={}&\Prob\left[P_{n}X_{n+1}+k\chi\left\{X_{n+1}=-1\right\}=i\big|\mathcal{F}_{n}\right]\nonumber\\
={}&\Prob\left[P_{n}=i,X_{n+1}=+1\big|\mathcal{F}_{n}\right]+\Prob\left[P_{n}=k-i,X_{n+1}=-1\big|\mathcal{F}_{n}\right]\nonumber\\
={}&g\left(\frac{i}{k}\right){k\choose i}q_{\beta}^{i}\left(\frac{A_{n}}{n},\frac{B_{n}}{n},\frac{C_{n}}{n}\right)\left\{1-q_{\beta}\left(\frac{A_{n}}{n},\frac{B_{n}}{n},\frac{C_{n}}{n}\right)\right\}^{k-i}\nonumber\\&+\left\{1-g\left(\frac{k-i}{k}\right)\right\}{k\choose k-i}q_{\beta}^{k-i}\left(\frac{A_{n}}{n},\frac{B_{n}}{n},\frac{C_{n}}{n}\right)\left\{1-q_{\beta}\left(\frac{A_{n}}{n},\frac{B_{n}}{n},\frac{C_{n}}{n}\right)\right\}^{i}.\nonumber
\end{align}
This yields
\begin{align}
\E\left[C_{n+1}-C_{n}\big|\mathcal{F}_{n}\right]={}&\sum_{i\in[k]}ig\left(\frac{i}{k}\right){k\choose i}q_{\beta}^{i}\left(\frac{A_{n}}{n},\frac{B_{n}}{n},\frac{C_{n}}{n}\right)\left\{1-q_{\beta}\left(\frac{A_{n}}{n},\frac{B_{n}}{n},\frac{C_{n}}{n}\right)\right\}^{k-i}+\sum_{j=0}^{k-1}(k-j)\left\{1-g\left(\frac{j}{k}\right)\right\}\nonumber\\&{k\choose j}q_{\beta}^{j}\left(\frac{A_{n}}{n},\frac{B_{n}}{n},\frac{C_{n}}{n}\right)\left\{1-q_{\beta}\left(\frac{A_{n}}{n},\frac{B_{n}}{n},\frac{C_{n}}{n}\right)\right\}^{k-j}=H_{3}\left(\frac{A_{n}}{n},\frac{B_{n}}{n},\frac{C_{n}}{n}\right),\label{C_{n+1}-C_{n}_cond_exp_fixed_size}
\end{align}
where $H_{3}$ is the function defined in \eqref{H_{3}_defn}.

We can now write the stochastic approximation process as follows:
\begin{align}
\begin{bmatrix}
(n+1)^{-1}A_{n+1}\\
(n+1)^{-1}B_{n+1}\\
(n+1)^{-1}C_{n+1}
\end{bmatrix}={}&
\begin{bmatrix}
n^{-1}A_{n}\\
n^{-1}B_{n}\\
n^{-1}C_{n}
\end{bmatrix}+\frac{1}{n+1}\left\{\begin{bmatrix}
\epsilon_{n+1,1}\\
\epsilon_{n+1,2}\\
\epsilon_{n+1,3}
\end{bmatrix}+h\left(\begin{bmatrix}
n^{-1}A_{n}\\
n^{-1}B_{n}\\
n^{-1}C_{n}
\end{bmatrix}\right)\right\},\label{sa_fixed}
\end{align}
where the step-sizes are given by the sequence $\{n^{-1}\}$, the drift function $h$ is as defined in \eqref{h_defn_fixed_size}, and the error terms $\{\epsilon_{n+1,i}\}$, for $i\in[3]$, are defined as
\begin{equation}\label{martingale_difference_noise_fixed_size}
\begin{cases}
\epsilon_{n+1,1}=A_{n+1}-A_{n}-H_{1}\left(n^{-1}A_{n},n^{-1}B_{n},n^{-1}C_{n}\right),\\
\epsilon_{n+1,2}=B_{n+1}-B_{n}-H_{2}\left(n^{-1}A_{n},n^{-1}B_{n},n^{-1}C_{n}\right),\\
\epsilon_{n+1,3}=C_{n+1}-C_{n}-H_{3}\left(n^{-1}A_{n},n^{-1}B_{n},n^{-1}C_{n}\right).
\end{cases}
\end{equation}
From \eqref{A_{n+1}-A_{n}_cond_exp_fixed_size}, \eqref{B_{n+1}-B_{n}_cond_exp_fixed_size} and \eqref{C_{n+1}-C_{n}_cond_exp_fixed_size}, it is evident that $\{\epsilon_{n+1,i}:n\geqslant N\}$ is a martingale difference sequence for each $i\in[3]$.

\begin{lemma}\label{lem:positively_invariant_domain}
The set $\mathcal{S}$ is positively invariant corresponding to the ODE in \eqref{ODE_fixed_sample_size}.
\end{lemma}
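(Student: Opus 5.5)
The plan is to exploit two facts: $h(\mathbf{x})=H(\mathbf{x})-\mathbf{x}$ has the form "map into $\mathcal{S}$ minus identity", and $\mathcal{S}$ is compact and convex. Convexity of $\mathcal{S}$ is clear, since it is the box $[0,1]\times[0,k]\times[0,k]$ intersected with the half-space $\{c\geqslant b\}$. We observed right after \eqref{H_{1},H_{2},H_{3}_concise_expressions} that $H(\mathcal{S})\subset\mathcal{S}$. By variation of constants, a solution of \eqref{ODE_fixed_sample_size} should satisfy
\begin{equation}
\mathbf{x}(t)=e^{-t}\mathbf{x}(0)+\int_{0}^{t}e^{-(t-s)}H(\mathbf{x}(s))\,ds,\nonumber
\end{equation}
which is a convex combination of $\mathbf{x}(0)$ and a weighted average of values of $H$. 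The difficulty is that we only know $H(\mathbf{x}(s))\in\mathcal{S}$ if we already know $\mathbf{x}(s)\in\mathcal{S}$. To avoid this circularity, I would work with an auxiliary ODE whose solution is automatically in $\mathcal{S}$.

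\textbf{Step 1: $H$ is Lipschitz on $\mathcal{S}$.} On $\mathcal{S}$ we have $c+\beta\geqslant\beta>0$. Hence $q_{\beta}$ in \eqref{q_defn} is smooth on a neighbourhood of the compact set $\mathcal{S}$, and $q_{\beta}(\mathcal{S})\subset[0,1]$. Each $H_{i}$ is a polynomial in $q_{\beta}$, so $H$ is Lipschitz on $\mathcal{S}$.

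\textbf{Step 2: an auxiliary ODE.} Let $\pi:\mathbb{R}^{3}\rightarrow\mathcal{S}$ be the Euclidean projection onto the closed convex set $\mathcal{S}$; it is $1$-Lipschitz. Then $\tilde{h}(\mathbf{y})=H(\pi(\mathbf{y}))-\mathbf{y}$ is globally Lipschitz on $\mathbb{R}^{3}$ and agrees with $h$ on $\mathcal{S}$. It therefore yields a well-posed ODE, in the sense recalled in \S\ref{sec:results_from_literature}. This is also the natural meaning of \eqref{ODE_fixed_sample_size}: any Lipschitz extension of $h$ coincides with $\tilde{h}$ on $\mathcal{S}$, so once we know trajectories started in $\mathcal{S}$ stay there, the choice of extension is irrelevant.

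\textbf{Step 3: the auxiliary solution stays in $\mathcal{S}$.} For $\mathbf{y}(0)\in\mathcal{S}$, variation of constants gives
\begin{equation}
\mathbf{y}(t)=e^{-t}\mathbf{y}(0)+(1-e^{-t})\int_{0}^{t}\frac{e^{-(t-s)}}{1-e^{-t}}H(\pi(\mathbf{y}(s)))\,ds.\nonumber
\end{equation}
The integral is the mean of $s\mapsto H(\pi(\mathbf{y}(s)))$ against a probability density on $[0,t]$. This function is continuous and takes values in $\mathcal{S}$, so the mean lies in the closed convex set $\mathcal{S}$. One can see this via Riemann sums, which are convex combinations of points of $\mathcal{S}$ and converge to the integral, together with closedness of $\mathcal{S}$; or via a separating-hyperplane argument. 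Thus $\mathbf{y}(t)$ is a convex combination of two points of $\mathcal{S}$, so $\mathbf{y}(t)\in\mathcal{S}$ for all $t\geqslant 0$.

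\textbf{Step 4: conclusion.} Since $\mathbf{y}(t)\in\mathcal{S}$, we have $\pi(\mathbf{y}(t))=\mathbf{y}(t)$, so $\mathbf{y}$ solves $\dot{\mathbf{x}}=h(\mathbf{x})$. By uniqueness of solutions to the Lipschitz ODE, $\mathbf{y}$ is the trajectory of \eqref{ODE_fixed_sample_size} from $\mathbf{y}(0)$. Hence every trajectory started in $\mathcal{S}$ remains in $\mathcal{S}$ for all $t\geqslant 0$, i.e.\ $\mathcal{S}$ is positively invariant.

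The only step needing care is Step 3, namely justifying that the weighted integral average lies in $\mathcal{S}$; the rest is routine. As an alternative, I could use Nagumo's tangency condition. On each face of $\mathcal{S}$, $h$ points inward: for example, at $a=0$ we have $h_{1}=H_{1}\geqslant 0$; at $a=1$ we have $h_{1}=H_{1}-1\leqslant 0$; and on the face $c=b$ we have $h_{3}-h_{2}=H_{3}-H_{2}\geqslant 0$. However, Nagumo's condition needs extra care at edges and corners, so I prefer the convexity argument above.
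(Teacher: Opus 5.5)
Your proof is correct, but it takes a different route from the paper's.

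Both arguments rest on the same structural fact: $h=H-\mathrm{id}$ with $H(\mathcal{S})\subset\mathcal{S}$ and $\mathcal{S}$ convex. The paper uses this fact only infinitesimally.
\begin{enumerate}
\item It extends $h$ to a globally Lipschitz $\tilde{h}$ on $\mathbb{R}^{3}$ via Kirszbraun's theorem.
\item It observes that $\mathbf{x}+t\,h(\mathbf{x})=(1-t)\mathbf{x}+tH(\mathbf{x})\in\mathcal{S}$ for $t\in(0,1]$, so $h(\mathbf{x})$ lies in the tangent cone $\mathcal{T}_{\mathcal{S}}(\mathbf{x})$ at every point of $\mathcal{S}$.
\item It then invokes Nagumo's theorem.
\end{enumerate}

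You instead build the explicit extension $H\circ\pi-\mathrm{id}$. This removes the need for Kirszbraun, and its nonlinear part is $\mathcal{S}$-valued on all of $\mathbb{R}^{3}$. That is exactly what breaks the circularity you flagged. The variation-of-constants formula is then exact, not just heuristic, for this globally defined field. It exhibits $\mathbf{y}(t)$ directly as a convex combination of $\mathbf{y}(0)$ and an average of points of $\mathcal{S}$, so no viability theorem is needed. The averaging step is even simpler than Riemann sums or separating hyperplanes: $\mathcal{S}$ is cut out by the finitely many linear inequalities $0\leqslant a\leqslant 1$, $0\leqslant b\leqslant c\leqslant k$, and each is preserved under averaging.

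Your route buys a fully self-contained, elementary proof. What it costs is generality: it depends on the linear part of the field being exactly $-\mathbf{x}$. The paper's subtangential-condition argument applies to any Lipschitz field pointing into the tangent cone.

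Your stated reason for avoiding Nagumo does not apply to the paper's version. The convex-combination identity verifies $h(\mathbf{x})\in\mathcal{T}_{\mathcal{S}}(\mathbf{x})$ uniformly at every point of $\mathcal{S}$, including edges and corners. No face-by-face check like the one you sketched is required.
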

\begin{proof}
Recall, from the proof of Theorem~\ref{thm:main_1}, that the function $h$, defined in \eqref{h_defn_fixed_size}, is Lipschitz throughout $\mathcal{S}$. By Kirszbraun's Theorem (see \cite{kirszbraun1934zusammenziehende} and Theorem 1.2 of \cite{azagra2021kirszbraun}), there exists a function $\tilde{h}:\mathbb{R}^{3}\rightarrow\mathbb{R}^{3}$ such that $\tilde{h}|_{\mathcal{S}}\equiv h$ (i.e.\ $\tilde{h}$, restricted to $\mathcal{S}$, agrees with $h$, or, in other words, $\tilde{h}$ is an extension of $h$ to the entire $\mathbb{R}^{3}$) and $\tilde{h}$ is Lipschitz throughout $\mathbb{R}^{3}$ with the same Lipschitz constant as that of $h$ on $\mathcal{S}$. By Theorem B.1, Appendix B of \cite{borkar2008stochastic}, the initial value problem 
\begin{equation}\label{ODE_fixed_sample_size_extended}
(\dot{a}(t),\dot{b}(t),\dot{c}(t))=\tilde{h}(a(t),b(t),c(t)), \text{ with initial condition }(a(0),b(0),c(0))=(a,b,c)\in\mathbb{R}^{3},
\end{equation}
is well-posed for each $(a,b,c)\in\mathbb{R}^{3}$, i.e.\ there exists a unique solution, say $\varphi_{t}(a,b,c)$, to this initial value problem for all times $t\geqslant 0$.

For any subset $S$ of $\mathbb{R}^{d}$, for $d\in\mathbb{N}$, and any $\mathbf{z}\in\mathbb{R}^{d}$, we define $\dist(\mathbf{z},S)=\inf\{||\mathbf{z}-\mathbf{w}||:\mathbf{w}\in S\}$. For a closed subset $S$ of $\mathbb{R}^{d}$, and $\mathbf{x}\in S$, the \emph{tangent cone} to $S$ at $\mathbf{x}$ (see Definition 4.6 of \cite{blanchini2008set}, or \cite{bouligand1932introduction}) is defined as 
\begin{equation}
\mathcal{T}_{S}(\mathbf{x})=\{\mathbf{y}\in\mathbb{R}^{d}:\liminf_{t\rightarrow 0+}t^{-1}\dist\left(\mathbf{x}+t\mathbf{y},S\right)=0\}.\nonumber
\end{equation}
From \eqref{domain_defn}, it is immediate that the set $\mathcal{S}$ is compact as well as convex. From the argument following \eqref{H_{1},H_{2},H_{3}_concise_expressions}, the function $H=(H_{1},H_{2},H_{3})$ maps $\mathcal{S}$ to itself, so that $H(a,b,c)\in\mathcal{S}$ for any $(a,b,c)\in\mathcal{S}$. Any convex combination of $H(a,b,c)$ and $(a,b,c)$, for each $(a,b,c)\in\mathcal{S}$, must, therefore, be in $\mathcal{S}$ again. Thus, for each $(a,b,c)\in\mathcal{S}$, and for any $t\in(0,1]$, using \eqref{h_defn_fixed_size}, we have:
\begin{align}
\begin{bmatrix}
a\\
b\\
c
\end{bmatrix}+t h\left(\begin{bmatrix}
a\\
b\\
c
\end{bmatrix}\right)=(1-t)\begin{bmatrix}
a\\
b\\
c
\end{bmatrix}+t H\left(\begin{bmatrix}
a\\
b\\
c
\end{bmatrix}\right)\in\mathcal{S} \implies h\left(\begin{bmatrix}
a\\
b\\
c
\end{bmatrix}\right)\in\mathcal{T}_{\mathcal{S}}\left(\begin{bmatrix}
a\\
b\\
c
\end{bmatrix}\right).\nonumber
\end{align}
 
We now conclude, via Nagumo's Theorem (see Corollary 4.8 of \cite{blanchini2008set}), that $\mathcal{S}$ is a positively invariant set corresponding to \eqref{ODE_fixed_sample_size_extended}, and consequently, to \eqref{ODE_fixed_sample_size}.
\end{proof}

\begin{proof}[Proof of Theorem~\ref{thm:main_1}]
Since $\beta>0$, the function $q_{\beta}$, defined in \eqref{q_defn}, has continuous partial derivatives throughout the compact set $\mathcal{S}$ defined in \eqref{domain_defn}. Each of $H_{1}(a,b,c)$, $H_{2}(a,b,c)$ and $H_{3}(a,b,c)$, defined in \eqref{H_{1}_defn}, \eqref{H_{2}_defn} and \eqref{H_{3}_defn}, is a polynomial in $q_{\beta}(a,b,c)$. Combining these observations, we conclude that each of $H_{1}$, $H_{2}$ and $H_{3}$ has continuous partial derivatives throughout the compact set $\mathcal{S}$, and hence, the function $h$, defined in \eqref{h_defn_fixed_size}, is Lipschitz on $\mathcal{S}$, thereby ensuring Assumption~\eqref{borkar_1} for \eqref{sa_fixed}.

The step-sizes are given by $a_{n}=n^{-1}$ in \eqref{sa_fixed}, so that, right away, Assumption~\eqref{borkar_2} holds.

Here, $\Delta M_{n+1}^{T}=(\epsilon_{n+1,1},\epsilon_{n+1,2},\epsilon_{n+1,3})$, defined via \eqref{martingale_difference_noise_fixed_size}, while $\delta_{n}^{T}=(0,0,0)$, for all $n\geqslant N$. By the description of the model in \S\ref{subsec:model_fixed_sample_size}, it is evident that $A_{n+1}-A_{n}\in\{0,1\}$ while each of $B_{n+1}-B_{n}$ and $C_{n+1}-C_{n}$ takes values in $\{0,1,\ldots,k\}$. Therefore, $\E[\epsilon_{n+1,1}^{2}|\mathcal{F}_{n}]\leqslant 1$ and $\E[\epsilon_{n+1,i}^{2}|\mathcal{F}_{n}]\leqslant k^{2}$ for each $i\in\{2,3\}$, so that $\E[||\Delta M_{n+1}||^{2}|\mathcal{F}_{n}]\leqslant 2k^{2}+1$, making sure that the final criterion of Corollary~\ref{cor:borkar_a.s.} is satisfied (with $K=2k^{2}+1$).   The conclusion of Theorem~\ref{thm:main_1} now follows from Corollary~\ref{cor:borkar_a.s.}.
\end{proof}

\begin{proof}[Proof of Theorem~\ref{thm:main_2}]
Lemma~\ref{lem:positively_invariant_domain}, along with the first paragraph of its proof, ensures that the initial value problem
\begin{equation}
\left(\dot{a}(t),\dot{b}(t),\dot{c}(t)\right)=h\left(a(t),b(t),c(t)\right),\text{ with }(a(0),b(0),c(0))=(a,b,c),\label{general_IVP_fixed_sample_size}
\end{equation}
is well-posed for \emph{any} $(a,b,c)\in\mathcal{S}$, and its unique solution, $(a(t),b(t),c(t))=\varphi_{t}(a,b,c)$, lies in $\mathcal{S}$ for all $t\geqslant 0$. Note that \eqref{general_IVP_fixed_sample_size} reduces to \eqref{ODE_fixed_sample_size} when we take $(a,b,c)=(A_{N}/N,0,0)$ (which belongs to $\mathcal{S}$). 

From \eqref{H_{1},H_{2},H_{3}_concise_expressions}, \eqref{h_defn_fixed_size} and \eqref{F_{1},F_{2}_defns}, the ODE in \eqref{general_IVP_fixed_sample_size} can be rewritten as:
\begin{equation}\label{ODE_fixed_sample_size_concise}
\begin{cases}
&\dot{a}(t)=F_{1}\left(q_{\beta}(a(t),b(t),c(t))\right)-a(t), \quad \dot{b}(t)=F_{2}\left(q_{\beta}(a(t),b(t),c(t))\right)-b(t)\\
&\dot{c}(t)=2F_{2}\left(q_{\beta}(a(t),b(t),c(t))\right)-kF_{1}\left(q_{\beta}(a(t),b(t),c(t))\right)+k\left\{1-q_{\beta}(a(t),b(t),c(t))\right\}-c(t).
\end{cases}
\end{equation}
\begin{enumerate*}
\item Substituting in \eqref{q_defn} the unique solution to \eqref{ODE_fixed_sample_size_concise}, i.e.\ $(a(t),b(t),c(t))=\varphi_{t}(a,b,c)$ for all $t\geqslant 0$, 
\item differentiating both sides of the identity $q_{\beta}(a(t),b(t),c(t))\{c(t)+\beta\}=b(t)+\beta a(t)$, for all $t\geqslant 0$, with respect to $t$, 
\item and using the definition of the function $G$ from \eqref{G_defn},
\end{enumerate*}
we obtain:
\begin{align}
\dot{q}_{\beta}(a(t),b(t),c(t))=\left\{c(t)+\beta\right\}^{-1}G\left(q_{\beta}(a(t),b(t),c(t))\right).\label{q_derivative_1}
\end{align}
Recalling that $q^{*}$ is the unique root of $G$ in $(0,1)$ (in fact, in $[0,1]$), we define the function $V:\mathcal{S}\rightarrow\mathbb{R}$ as
\begin{equation}\label{V_defn}
V(a,b,c)=-\int_{q^{*}}^{q_{\beta}(a,b,c)}G(s)ds, \text{ for each }(a,b,c)\in\mathcal{S}.
\end{equation}
In the next two paragraphs, we make a couple of observations regarding the function $V$. 

Recall, from the discussion immediately preceding the statement of Theorem~\ref{thm:main_2}, that $G(0)>0$ and $G(1)<0$ when $g(0)>0$ and $g(1)<1$. This, along with the assumption that $G$ has a unique root, $q^{*}$, in $(0,1)$, ensures that $G(s)>0$ for all $s\in[0,q^{*})$ and $G(s)<0$ for all $s\in(q^{*},1]$. Now, we consider any $(a,b,c)\in\mathcal{S}$ such that $q_{\beta}(a,b,c)\neq q^{*}$. We have
\begin{equation}\label{V_nonnegative}
V(a,b,c)
\begin{cases} 
=-\int_{q^{*}}^{q_{\beta}(a,b,c)}G(s)ds=\int_{q_{\beta}(a,b,c)}^{q^{*}}G(s)ds>0 &\text{when }q_{\beta}(a,b,c)<q^{*},\\
=-\int_{q^{*}}^{q_{\beta}(a,b,c)}G(s)ds=\int_{q^{*}}^{q_{\beta}(a,b,c)}\{-G(s)\}ds>0 &\text{when } q_{\beta}(a,b,c)>q^{*},
\end{cases}
\end{equation}
thus ensuring that $V(a,b,c)\geqslant 0$ for all $(a,b,c)\in\mathcal{S}$, with equality if and only if $q_{\beta}(a,b,c)=q^{*}$, or, equivalently, $(a,b,c)\in\Lambda$, with $\Lambda$ as defined in the statement of Theorem~\ref{thm:main_2}.   

For $(a(t),b(t),c(t))=\varphi_{t}(a,b,c)$ satisfying \eqref{general_IVP_fixed_sample_size} for all $t\geqslant 0$, we have, applying the Leibniz rule of differentiation of an integral and using \eqref{q_derivative_1}:
\begin{align}\label{V_derivative}
\dot{V}(a(t),b(t),c(t))={}&-\dot{q}_{\beta}(a(t),b(t),c(t))G\left(q_{\beta}(a(t),b(t),c(t))\right)=-\frac{G^{2}\left(q_{\beta}(a(t),b(t),c(t))\right)}{c(t)+\beta}.
\end{align}
Recall that $(a(t),b(t),c(t))=\varphi_{t}(a,b,c)\in\mathcal{S}$ for all $t\geqslant 0$ provided $(a,b,c)\in\mathcal{S}$, by Lemma~\ref{lem:positively_invariant_domain}, so that $c(t)\geqslant 0\implies c(t)+\beta\geqslant\beta>0$ for all $t\geqslant 0$. This ensures that the denominator of \eqref{V_derivative} is strictly positive for all $t\geqslant 0$ provided $(a,b,c)\in\mathcal{S}$. This observation, along with the assumption that $G$ has a unique root, $q^{*}$, in $[0,1]$, yields: 
\begin{equation}\label{V_decreasing}
V\left(\varphi_{t}(a,b,c)\right)
\begin{cases}
\text{is monotonically decreasing in }t, \text{ for all }t\geqslant 0 \text{ and all }(a,b,c)\in\mathcal{S},\\
\text{is strictly decreasing in }t \text{ whenever }q_{\beta}\left(\varphi_{t}(a,b,c)\right)\neq q^{*} (\text{equivalently, }\varphi_{t}(a,b,c)\notin\Lambda).
\end{cases}
\end{equation}
Moreover, the continuity of the functions $G$ on $[0,1]$ and $q_{\beta}$ on $\mathcal{S}$, the continuity of $(a(t),b(t),c(t))=\varphi_{t}(a,b,c)$ as a function of $t$ for $t\geqslant 0$, and the fact (as observed above) that the denominator of \eqref{V_derivative} is bounded away from $0$ uniformly for all $t\geqslant 0$ provided $(a,b,c)\in\mathcal{S}$, together ensure that $\dot{V}(a(t),b(t),c(t))=\dot{V}(\varphi_{t}(a,b,c))$ is continuous in $t$ for all $t\geqslant 0$, for each $(a,b,c)\in\mathcal{S}$.

Let $L$ be \emph{any} compact, connected, internally chain transitive invariant set, contained in $\mathcal{S}$, corresponding to the ODE in \eqref{general_IVP_fixed_sample_size} (recall that \eqref{ODE_fixed_sample_size} is a special case of \eqref{general_IVP_fixed_sample_size}). Let $\theta=\inf\{V(a,b,c):(a,b,c)\in L\}$. The function $V$, as is evident from its definition in \eqref{V_defn} and the continuity of $q_{\beta}$ on $\mathcal{S}$, is continuous throughout $\mathcal{S}$, and as $L$ is compact, the minimum of $V$ on $L$ must be attained. Let $(a_{1},b_{1},c_{1})$ be \emph{any} element in $L$ such that $V(a_{1},b_{1},c_{1})=\theta$. Our goal, to begin with, is to show that $(a_{1},b_{1},c_{1})\in\Lambda$.

If possible, let $(a_{1},b_{1},c_{1})\notin\Lambda$, so that $q_{\beta}(\varphi_{0}(a_{1},b_{1},c_{1}))=q_{\beta}(a_{1},b_{1},c_{1})\neq q^{*}$. From the second assertion stated in \eqref{V_decreasing} and the observation immediately following \eqref{V_decreasing}, we conclude the existence of some $\epsilon_{1}>0$ such that $V(\varphi_{t}(a_{1},b_{1},c_{1}))$ is strictly decreasing in $t$ for all $t\in[0,\epsilon_{1}]$, i.e.\
\begin{equation}
V(\varphi_{t}(a_{1},b_{1},c_{1}))<V(\varphi_{0}(a_{1},b_{1},c_{1}))=V(a_{1},b_{1},c_{1})=\theta\text{ for all }t\in(0,\epsilon_{1}].\label{V_strictly_decreasing}
\end{equation}
However, $\varphi_{t}(a_{1},b_{1},c_{1})\in L$ for each $t>0$, since $(a_{1},b_{1},c_{1})\in L$ and $L$ is an invariant set corresponding to \eqref{general_IVP_fixed_sample_size}. Consequently, \eqref{V_strictly_decreasing} contradicts the minimality of $\theta$ as defined above, leading to the conclusion that
\begin{equation}
(a_{1},b_{1},c_{1})\in L \text{ and } V(a_{1},b_{1},c_{1})=\theta \implies (a_{1},b_{1},c_{1})\in\Lambda.\label{conclusion:in_Lambda}
\end{equation}

This conclusion, which is equivalent to $q_{\beta}(a_{1},b_{1},c_{1})=q^{*}$, further implies, by \eqref{V_defn}, that $\theta=V(a_{1},b_{1},c_{1})=0$. Our goal, now, is to show that $V$ is constant on the set $L$ (and hence, the only value it takes on $L$ is $\theta=0$). To this end, for \emph{any} sequence $\{\theta_{n}:n\in\mathbb{N}\}$ with $\theta_{n}\downarrow 0$ as $n\rightarrow\infty$, let us define the sequence of subsets
\begin{equation}
L_{n}=\{(a,b,c)\in L:V(a,b,c)<\theta_{n}\}.\nonumber
\end{equation}
Since the function $V$ is continuous, each $L_{n}$ is open relative to $L$. We let $\overline{L}_{n}$ indicate the closure of $L_{n}$.  Since $L_{n}\subset L$ and $L$ is compact, we have $\overline{L}_{n}\subset L$ and $\overline{L}_{n}$ is compact as well. Fixing any $T>0$, we now show that $\varphi_{T}(\overline{L}_{n})\subset L_{n}$ for each $n\in\mathbb{N}$. To show this, consider any $(a,b,c)\in\overline{L}_{n}$:
\begin{enumerate}
\item If $(a,b,c)\in\Lambda$, then $q_{\beta}(a,b,c)=q^{*}$, implying $V(a,b,c)=0$. By \eqref{V_defn}, \eqref{V_nonnegative} and the first assertion stated in \eqref{V_decreasing}, we have $0\leqslant V(\varphi_{T}(a,b,c))\leqslant V(\varphi_{0}(a,b,c))=V(a,b,c)=0$, so that $V(\varphi_{T}(a,b,c))=0<\theta_{n} \implies \varphi_{T}(a,b,c)\in L_{n}$. 
\item If $(a,b,c)\notin\Lambda$, then $q_{\beta}(a,b,c)\neq q^{*}$. The second assertion stated in \eqref{V_decreasing}, along with the remark immediately following \eqref{V_decreasing}, ensures the existence of some $\epsilon>0$ such that $V(\varphi_{t}(a,b,c))$ is strictly decreasing in $t$ for all $t\in[0,\epsilon]$. Since $(a,b,c)\in\overline{L}_{n}$, we have $V(a,b,c)\leqslant \theta_{n}$. Combining these two observations, along with the first assertion stated in \eqref{V_decreasing}, we conclude that 
\[
\begin{cases}
V\left(\varphi_{\epsilon}(a,b,c)\right)\leqslant V\left(\varphi_{T}(a,b,c)\right)<V\left(\varphi_{0}(a,b,c)\right)=V(a,b,c)\leqslant \theta_{n} &\text{if }T\in(0,\epsilon],\\
V\left(\varphi_{T}(a,b,c)\right)\leqslant V\left(\varphi_{\epsilon}(a,b,c)\right)<V\left(\varphi_{0}(a,b,c)\right)=V(a,b,c)\leqslant \theta_{n} &\text{if }T\in(\epsilon,\infty),
\end{cases}
\]
so that in either scenario, $V\left(\varphi_{T}(a,b,c)\right)<\theta_{n} \implies \varphi_{T}(a,b,c)\in L_{n}$.
\end{enumerate}  
This completes the proof of our claim that $\varphi_{T}(\overline{L}_{n})\subset L_{n}$ for all $T>0$, for each $n\in\mathbb{N}$. By Proposition 3.19 of \cite{benaim2005stochastic}, or by Lemma 5.2 of \cite{benaim2006dynamics}, we conclude that $L_{n}$ must be a fundamental neighbourhood of some attractor $A$. However, $L_{n}\subset L$, and by Proposition 3.20 of \cite{benaim2005stochastic}, or Proposition 5.3 of \cite{benaim2006dynamics}, we know that $L$, being internally chain transitive, can contain no attractor as a proper subset. Therefore, we must have $L=L_{n}$ for each $n\in\mathbb{N}$, or, in other words,
\begin{equation}
L=\bigcap_{n\in\mathbb{N}}L_{n}=\bigcap_{n\in\mathbb{N}}\{(a,b,c)\in L:V(a,b,c)<\theta_{n}\}=\{(a,b,c)\in L:V(a,b,c)=0\}.\nonumber
\end{equation}
This, along with \eqref{conclusion:in_Lambda}, yields: $(a,b,c)\in L\implies V(a,b,c)=\theta=0 \implies (a,b,c)\in\Lambda$, proving $L\subset\Lambda$.
\end{proof}

\begin{proof}[Proof of Proposition~\ref{prop:examples_g_unique_root_of_G}]
Recalling $G$ from \eqref{G_defn}, we first compute its derivative when $f(x)=c$ for all $x\in[0,1]$, with $c\in(0,1)$. In this case, $g(x)=d$, where $d=pf(x)+(1-p)\{1-f(x)\}=(2p-1)c+(1-p)$, for all $x\in[0,1]$. This yields $G(q)=(1-2q)dkq+(\beta+kq)d-kq+kq^{2}-\beta q$, so that
\begin{align}
G'(q)
={}&k(2d-1)(1-2q)-\beta \implies G''(q)=-2k(2d-1).\nonumber
\end{align}
Right away, we see that the sign of $G''(q)$ remains unchanged throughout $[0,1]$, so that $G$ is either strictly convex (when $d<1/2$), or strictly concave (when $d>1/2$), or a straight line (when $d=1/2$). In the first couple of cases, the curve $y=G(x)$ can intersect the $x$-axis at most once when $x$ lies in $[0,1]$, whereas in the third case, it can intersect the $x$-axis at most once. As argued right after \eqref{G_defn}, we have $G(0)>0$ and $G(1)<0$ assuming $g(0)>0$ and $g(1)<1$, both of which are true in this case since $\min\{p,1-p\}<d<\max\{p,1-p\}$. Combining these observations, we conclude that, in fact, for each possible value of $d$, the curve $y=G(x)$ intersects the $x$-axis precisely once in the interval $[0,1]$ (in fact, the intersection happens in $(0,1)$), leading to a unique root for $G$ in $[0,1]$.

For $f(x)=x$ for all $x\in[0,1]$, we have $g(x)=(2p-1)x+(1-p)$ for all $x\in[0,1]$. From \eqref{F_{1},F_{2}_defns}, we obtain:
\begin{align}
F_{1}(q)
=1-p-q+2pq,\quad
F_{2}(q)
=(2p-1)q(1-q)+kq(1-p-q+2pq).\nonumber
\end{align}
This yields, from \eqref{G_defn}:
\begin{align}
{}&G(q)
=2q^{3}(k-1)(1-2p)-3q^{2}(k-1)(1-2p)+q(k-1)(1-2p)-2q\beta(1-p)+\beta(1-p)\nonumber\\
\implies{}&G'(q)
=(6q^{2}-6q+1)(k-1)(1-2p)-2\beta(1-p).\label{G'_f(x)=x}
\end{align}
The polynomial $P(q)=6q^{2}-6q+1$ is the only expression via which \eqref{G'_f(x)=x} depends on $q$. Note that $P(q)=-6q(1-q)+1$ is symmetric around $q=1/2$, and $P'(q)=-6(1-2q)$, so that $P(q)$ is strictly decreasing for $q\in[0,1/2)$ and strictly increasing for $q\in(1/2,1]$. Therefore, we have the following scenarios:
\begin{enumerate}
\item When $p\in[0,1/2)$, the function $G'(q)$ is strictly decreasing for $q\in[0,1/2)$ and strictly increasing for $q\in(1/2,1]$, so that its maximum is attained at $q=0$ as well as $q=1$, and its minimum is attained at $q=1/2$. Evidently, if $G'(0)\leqslant 0$, then $G'(q)<0$ for all $q\in(0,1)$, implying that $G$ is strictly decreasing, and hence, the curve $y=G(x)$ can intersect the $x$-axis at most once in the interval $[0,1]$. 

Suppose $G'(0)>0$. We note that $G'(1/2)=-(k-1)(1-2p)/2-2\beta(1-p)<0$, and the symmetry of $P(q)$ around $q=1/2$ implies that $G'(q)$ is symmetric around $1/2$ as well, so that, if $\gamma_{1}$ and $\gamma_{2}$, with $\gamma_{1}<1/2<\gamma_{2}$, are the two roots of $G'(q)$, then $\gamma_{1}+\gamma_{2}=1$ and $G'(q)$ is strictly positive for $q\in[0,\gamma_{1})$, strictly negative for $q\in(\gamma_{1},\gamma_{2})$, and strictly positive for $q\in(\gamma_{2},1]$. This ensures that $G(q)$ is strictly increasing for $q\in[0,\gamma_{1})$, strictly decreasing for $q\in(\gamma_{1},\gamma_{2})$, and strictly increasing for $q\in(\gamma_{2},1]$. But since we already have $G(0)>0$ and $G(1)<0$ (by the discussion following \eqref{G_defn}), the curve $y=G(x)$ must intersect the $x$-axis only once at some point in $(\gamma_{1},\gamma_{2})$, and neither in $[0,\gamma_{1}]$ nor in $[\gamma_{2},1]$. Thus, when $p\in[0,1/2)$, the function $G$ \emph{always} has a unique root in $[0,1]$.

\item When $p\in(1/2,1]$, the function $G'(q)$ is strictly increasing for $q\in[0,1/2)$ and strictly decreasing for $q\in(1/2,1]$, so that its minimum is attained at $q=0$ as well as $q=1$, and its maximum is attained at $q=1/2$. The maximum value is $G'(1/2)=(2p-1)(k-1)/2-2\beta(1-p)$, so that if \eqref{beta_p_cond} is true, we have $G'(q)\leqslant 0$ for all $q\in[0,1]$, with equality iff $q=1/2$, forcing $G$ to be strictly decreasing and therefore, have only one root in $[0,1]$.

On the other hand, if \eqref{beta_p_cond} fails to hold, then $G'(1/2)>0$. Noting that $G'(0)=-(k-1)(2p-1)-2\beta(1-p)<0$, and using the symmetry of $G'(q)$ around $q=1/2$, we conclude that $G'(q)$ is strictly negative for $q\in[0,\gamma_{1})$, strictly positive in $(\gamma_{1},\gamma_{2})$, and strictly negative again in $(\gamma_{2},1]$, where $\gamma_{1}$ and $\gamma_{2}$ are the roots of $G'$ in $[0,1]$, equidistant from $1/2$, as also mentioned earlier. This means that $G$ is strictly decreasing for $q\in[0,\gamma_{1})$, strictly increasing for $q\in(\gamma_{1},\gamma_{2})$, and strictly decreasing for $q\in(\gamma_{2},1]$. We consider two possible scenarios:
\begin{enumerate}
\item If $G(\gamma_{1})>0$, then it is evident that the curve $y=G(x)$ lies \emph{strictly above} the $x$-axis for $q\in[0,\gamma_{2}]$. Therefore, $y=G(x)$ must intersect the $x$-axis in $(\gamma_{2},1]$, and it can do so at most once due to its strictly decreasing nature in this domain of values of $q$. This scenario happens whenever \eqref{beta_{1}_cond} is true.
\item If $G(\gamma_{2})<0$, then it is evident that the curve $y=G(x)$ lies \emph{strictly beneath} the $x$-axis for $q\in[\gamma_{1},1]$. Therefore, $y=G(x)$ must intersect the $x$-axis in $[0,\gamma_{1})$, and it can do so at most once due to its strictly decreasing nature in this domain of values of $q$. This scenario happens whenever \eqref{beta_{2}_cond} is true.
\end{enumerate}
\end{enumerate}

Let us now consider $f(x)=e^{x-1}$ for all $x\in[0,1]$, so that $g(x)=(2p-1)e^{x-1}+(1-p)$ for each $x\in[0,1]$. From \eqref{F_{1},F_{2}_defns}, we obtain:
\begin{align}
F_{1}(q)
=(2p-1)e^{-1}(1-q+q e^{1/k})^{k}+(1-p),\
F_{2}(q)
=k(2p-1)q e^{1/k-1}(1-q+q e^{1/k})^{k-1}+(1-p)kq,\nonumber
\end{align}
so that, setting $A(q)=(1-q+q e^{1/k})$, we have 
\begin{align}
G(q)={}&k(1-2q)(2p-1)q e^{1/k-1}A(q)^{k-1}+k(1-2q)q(1-p)+(\beta+kq)(2p-1)e^{-1}A(q)^{k}\nonumber\\&+(\beta+kq)(1-p)-kq(1-q)-\beta q,\label{G(q)_f_exponential}
\end{align}
leading to $G'(q)=B(q)+C(q)+D(q)$, where
\begin{align}
B(q)={}&k(2p-1)e^{-1}A(q)^{k-2}\left\{e^{1/k}(1-4q)A(q)-e^{1/k}\left(e^{1/k}-1\right)q(1-2q)+A(q)^{2}+\left(e^{1/k}-1\right)\beta A(q)\right\}\nonumber\\
C(q)={}&k^{2}(2p-1)e^{-1}\left(e^{1/k}-1\right)q A(q)^{k-2}\left\{e^{1/k}(1-2q)+A(q)\right\},\quad D(q)=k(1-2q)(1-2p)-\beta.\nonumber
\end{align}
For $p\in(0,1/2)$, using the inequalities $2p-1<0$ and $e^{1/k}-1>1/k$, we obtain: $C(q)<k(2p-1)e^{-1}q A(q)^{k-2}\{e^{1/k}(1-2q)+A(q)\}$, so that combining $B(q)$ with this upper bound and adding $D(q)$ yields
\begin{align}
{}&G'(q)<
k(2p-1)\{e^{-1}(1+e^{1/k})A(q)^{k-2}(1-q-q^{2}e^{1/k})+(2q-1)\}-\beta\{k(1-2p)e^{-1}(e^{1/k}-1)A(q)^{k-1}+1\}\nonumber\\
{}&<k(2p-1)\{e^{-1}(1+e^{1/k})A(q)^{k-2}(1-q-q^{2}e^{1/k})+(2q-1)\}-\beta\{(1-2p)e^{-1}A(q)^{k-1}+1\}\nonumber\\
{}&<
k(2p-1)\{e^{-1}(1+e^{1/k})A(q)^{k-2}(1-q-q^{2}e^{1/k})+(1-2p)e^{-1}A(q)^{k-1}+2q\},\label{G'(q)_upper_bound_p<1/2}
\end{align}
where the final step follows from our assumption that $\beta>k(1-2p)$. Note that the only root of the quadratic polynomial $(1-q-q^{2}e^{1/k})$ that lies in $[0,1]$ (in fact, in $(0,1)$) is $\gamma=e^{-1/k}(\sqrt{1+4e^{1/k}}-1)/2$, and it is evident that the final expression in \eqref{G'(q)_upper_bound_p<1/2} is strictly negative for all $q\in[0,\gamma]$. In order to deal with $q\in(\gamma,1]$, we note that $A(q)$ is strictly increasing for $q\in[0,1]$, so that $1\leqslant A(q)\leqslant e^{1/k}$, so that, continuing with the inequality in \eqref{G'(q)_upper_bound_p<1/2}, we can write, for $q\in(\gamma,1]$:
\begin{align}
G'(q)<k(2p-1)\{(1+e^{1/k})e^{-2/k}(1-q-q^{2}e^{1/k})+(1-2p)e^{-1}+2q\}.\label{G'(q)_second_upper_bound_p<1/2_q>gamma}
\end{align}
Differentiating, twice, the expression in \eqref{G'(q)_second_upper_bound_p<1/2_q>gamma} (ignoring the constant factor of $k(2p-1)$), we obtain $-2(1+e^{1/k})e^{-1/k}$, showing us that the final upper bound in \eqref{G'(q)_second_upper_bound_p<1/2_q>gamma} is strictly convex throughout $[0,1]$. Moreover, the value of this upper bound at $q=\gamma$ equals $k(2p-1)\{(1-2p)e^{-1}+2\gamma\}<0$, and its value at $q=1$ equals $k(2p-1)\{-(1+e^{1/k})e^{-1/k}+(1-2p)e^{-1}+2\}=k(2p-1)\{1-e^{-1/k}+(1-2p)e^{-1}\}<0$. Thus, we conclude that the final upper bound in \eqref{G'(q)_second_upper_bound_p<1/2_q>gamma} is strictly negative for all $q\in(\gamma,1]$. This proves that when $k\geqslant 2$, $p\in(0,1/2)$ and $\beta>k(1-2p)$, the function $G$ is strictly decreasing in $[0,1]$, and hence may have at most one root in $[0,1]$. From \eqref{G(q)_f_exponential}, we have $G(0)=\beta p e^{-1}+\beta(1-p)(1-e^{-1})>0$ and $G(1)=-\beta(1-p)<0$, so that $G$ must have at least one root in $(0,1)$. Combining these two observations, the final conclusion follows.
\end{proof}
\begin{figure}[h!]
  \centering
    \includegraphics[width=0.7\textwidth]{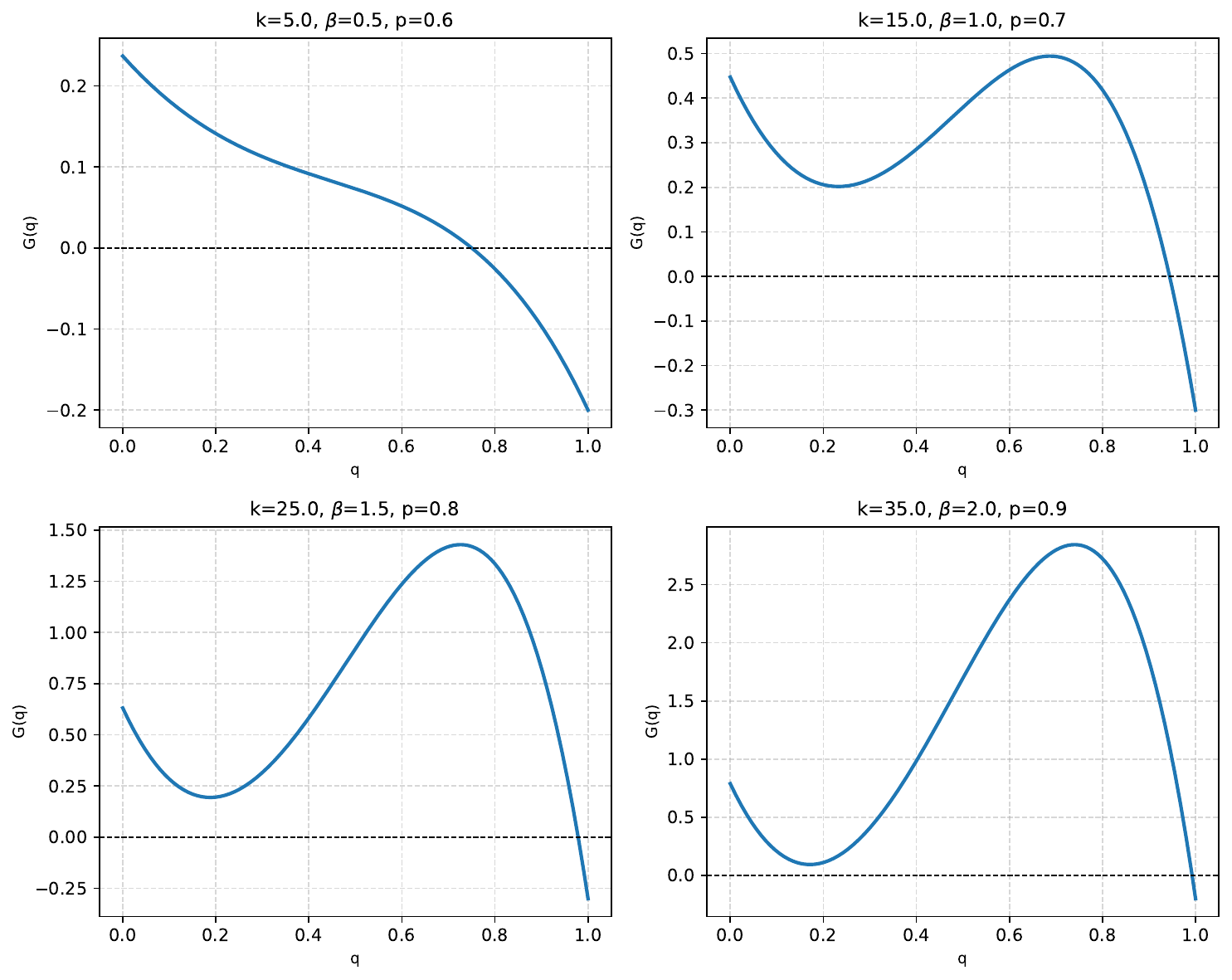}
\caption{Examples of values of the triple $(k,\beta,p)$, where $p\in(1/2,1)$, for which $G$ has a unique root in $[0,1]$ when $f(x)=e^{x-1}$ for all $x\in[0,1]$.}
  \label{fig}
\end{figure}

\begin{proof}[Proof of Theorem~\ref{thm:main_6}]
The relevant stochastic approximation process to be considered here is \eqref{sa_fixed}, with the drift function $h$ as defined in \eqref{h_defn_fixed_size}. By Lemma~\ref{lem:positively_invariant_domain}, we know that the compact set $\mathcal{S}$, as defined in \eqref{domain_defn} and on which all of our functions have been defined, is positively invariant corresponding to the ODE in \eqref{ODE_fixed_sample_size}. Therefore, as long as we ensure that the initial value that our stochastic process, as well as the trajectory corresponding to the boundary value problem governed by \eqref{ODE_fixed_sample_size}, begins from, lies inside $\mathcal{S}$, we shall forever remain confined to $\mathcal{S}$. It, therefore, suffices to ensure that, of the assumptions mentioned in the statement of Theorem~\ref{thm:borkar_fclt}, the ones concerning the drift function $h$ and the covariance matrix function $Q$ are satisfied within $\mathcal{S}$.

Right away, we see that, 
\begin{enumerate*}
\item since each of $H_{1}$, $H_{2}$ and $H_{3}$, defined in \eqref{H_{1}_defn}, \eqref{H_{2}_defn} and \eqref{H_{3}_defn}, is a polynomial in $q_{\beta}(\cdot,\cdot,\cdot)$, and 
\item $q_{\beta}$, defined via \eqref{q_defn}, is in $\mathcal{C}^{(\infty)}(\mathcal{S})$,
\end{enumerate*}
each of $H_{1}$, $H_{2}$ and $H_{3}$, and consequently, the function $h$ itself, is in $\mathcal{C}^{(\infty)}(\mathcal{S})$. Consequently, the Jacobian matrix, $J_{h}$, of $h$ is also in $\mathcal{C}^{(\infty)}(\mathcal{S})$. These observations, along with the fact that $\mathcal{S}$ is compact, validate Assumption~\eqref{borkar_5}. Comparing \eqref{sa_fixed} with \eqref{sa_borkar_general}, we have $a_{n+1}=(n+1)^{-1}$, so that $a_{n+1}^{-1}-a_{n}^{-1}=1$ for each $n\geqslant N$, ensuring that the limit $\alpha$, mentioned in Assumption~\eqref{borkar_6}, exists and equals $1$. Since, in our set-up, $Z_{n}=(n^{-1}A_{n},n^{-1}B_{n},n^{-1}C_{n})\in\mathcal{S}$ and $\mathcal{S}$ is a compact set, Assumption~\eqref{borkar_7} is satisfied right away.

To verify Assumption~\eqref{borkar_8}, we consider $\Delta M_{n+1}=(\epsilon_{n+1,1},\epsilon_{n+1,2},\epsilon_{n+1,3})^{T}$, where the coordinates of $\Delta M_{n+1}$ are as defined in \eqref{martingale_difference_noise_fixed_size}. The definition of $\mathcal{S}$ ensures that $\Delta M_{n+1}\in[-1,1]\times[-k,k]\times[-k,k]$ (for instance, $A_{n+1}-A_{n}$ lies in $\{0,1\}$, so that $\E[A_{n+1}-A_{n}|\mathcal{F}_{n}]\in[0,1]$, implying that $\epsilon_{n+1,1}\in[-1,1]$), so that $||\Delta M_{n+1}||^{4}$ is bounded above by a constant almost surely. Right away, this ensures that the inequality in Assumption~\eqref{borkar_8} is satisfied.

All we have now left to do is compute $\E[\Delta M_{n+1}\Delta M_{n+1}^{T}|\mathcal{F}_{n}]$, for which we are required to compute $\E[\epsilon_{n+1,i}\epsilon_{n+1,j}|\mathcal{F}_{n}]$ for each $i,j\in[3]$. Note that, for computing each of $\E[(A_{n+1}-A_{n})(B_{n+1}-B_{n})|\mathcal{F}_{n}]$, $\E[(A_{n+1}-A_{n})(C_{n+1}-C_{n})|\mathcal{F}_{n}]$ and $\E[(B_{n+1}-B_{n})(C_{n+1}-C_{n})|\mathcal{F}_{n}]$, we need only care about the event $\{X_{n+1}=+1\}$, since for the event $\{X_{n+1}=-1\}$, each of the random variables $(A_{n+1}-A_{n})$ and $(B_{n+1}-B_{n})$ equals $0$. Moreover, on the event $\{X_{n+1}=+1\}$, we have $(B_{n+1}-B_{n})=(C_{n+1}-C_{n})$ since the sum of the in-degrees of all the vertices with opinion $-1$ does not change. Thus,
\begin{align}
{}&\E[(A_{n+1}-A_{n})(B_{n+1}-B_{n})|\mathcal{F}_{n}]=\E[(A_{n+1}-A_{n})(C_{n+1}-C_{n})|\mathcal{F}_{n}]=\sum_{i\in[k]}i\Prob[X_{n+1}=+1,P_{n}=i|\mathcal{F}_{n}]\nonumber\\
={}&\sum_{i\in[k]}i\Prob[X_{n+1}=+1|P_{n}=i,\mathcal{F}_{n}]\Prob[P_{n}=i|\mathcal{F}_{n}]=H_{2}(n^{-1}A_{n},n^{-1}B_{n},n^{-1}C_{n}),\nonumber
\end{align}
where $H_{2}$ is as defined in \eqref{H_{2}_defn}, while
\begin{align}
&\E\left[(B_{n+1}-B_{n})^{2}|\mathcal{F}_{n}\right]=\E[(B_{n+1}-B_{n})(C_{n+1}-C_{n})|\mathcal{F}_{n}]\nonumber\\
={}&\sum_{i\in[k]}i^{2}\Prob[X_{n+1}=+1,P_{n}=i|\mathcal{F}_{n}]=H_{4}(n^{-1}A_{n},n^{-1}B_{n},n^{-1}C_{n}),\nonumber
\end{align}
where $H_{4}$ is as defined in \eqref{H_{4}_defn}. Since $C_{n+1}-C_{n}=P_{n}\chi\{X_{n+1}=+1\}+(k-P_{n})\chi\{X_{n+1}=-1\}$, we have 
\begin{align}
{}&\E\left[(C_{n+1}-C_{n})^{2}|\mathcal{F}_{n}\right]=\sum_{i\in[k]}i^{2}\Prob[X_{n+1}=+1,P_{n}=i|\mathcal{F}_{n}]+\sum_{i=0}^{k}(k-i)^{2}\Prob[X_{n+1}=-1,P_{n}=i|\mathcal{F}_{n}]\nonumber\\
={}&kq_{\beta}\left(\frac{A_{n}}{n},\frac{B_{n}}{n},\frac{C_{n}}{n}\right)\left\{1-q_{\beta}\left(\frac{A_{n}}{n},\frac{B_{n}}{n},\frac{C_{n}}{n}\right)\right\}+k^{2}\left\{1-q_{\beta}\left(\frac{A_{n}}{n},\frac{B_{n}}{n},\frac{C_{n}}{n}\right)\right\}^{2}-k^{2}H_{1}\left(\frac{A_{n}}{n},\frac{B_{n}}{n},\frac{C_{n}}{n}\right)\nonumber\\&+2k H_{2}\left(\frac{A_{n}}{n},\frac{B_{n}}{n},\frac{C_{n}}{n}\right).\nonumber
\end{align}
Finally, we have $\E[(A_{n+1}-A_{n})^{2}|\mathcal{F}_{n}]=H_{1}(n^{-1}A_{n},n^{-1}B_{n},n^{-1}C_{n})$ (since $A_{n+1}-A_{n}\in\{0,1\}\implies A_{n+1}-A_{n}=(A_{n+1}-A_{n})^{2}$), with $H_{1}$ as defined in \eqref{H_{1}_defn}. These computations, along with \eqref{A_{n+1}-A_{n}_cond_exp_fixed_size}, \eqref{B_{n+1}-B_{n}_cond_exp_fixed_size} and \eqref{C_{n+1}-C_{n}_cond_exp_fixed_size}, yield $\E[\Delta M_{n+1}\Delta M_{n+1}^{T}|\mathcal{F}_{n}]=Q(n^{-1}A_{n},n^{-1}B_{n},n^{-1}C_{n})$, with $Q:\mathcal{S}\rightarrow\mathbb{R}^{3\times 3}$ whose elements are given by \eqref{Q_defn}. The conclusions in the statement of Theorem~\ref{thm:main_6} now follow from Theorem~\ref{thm:borkar_fclt}.
\end{proof}

\section{Proofs of the results stated in \S\ref{subsec:main_results_growing_sample_size}}\label{sec:proofs_growing_sample_size} 
Once again, the key to proving results, stated in \S\ref{subsec:main_results_growing_sample_size}, pertaining to the model described in \S\ref{subsec:model_growing_sample_size} is its representation as a stochastic approximation process. For this model, the vertices $v_{1},\ldots,v_{N}$ have out-degree equal to $0$ in $G_{n}$, whereas the out-degree of $v_{j}$, for each $j\in\{N+1,\ldots,n\}$, equals $P_{j-1}$ if $X_{j}=+1$, and $(k_{j-1}-P_{j-1})$ if $X_{j}=-1$. A computation similar to \eqref{C_{n}_defn_fixed_size} shows that the sum of all in-degrees of $G_{n}$ equals
\begin{align}\label{C_{n}_defn_growing_size}
C_{n}=\sum_{i=N+1}^{n}P_{i-1}X_{i}+\sum_{i=N+1}^{n}k_{i-1}\chi\left\{X_{i}=-1\right\}.
\end{align}
Here, the random variable $P_{n}$, conditioned on $\mathcal{F}_{n}$, follows a binomial distribution in which the total number of trials equals $k_{n}$, and the probability of success may be computed the same way as in \eqref{success_probab_P_{n}_distribution}, but with $C_{n}$ now defined as in \eqref{C_{n}_defn_growing_size}, and making use of \eqref{beta_{n}_s_{n}_defns} and \eqref{q_defn}: 
\begin{align}
\sum_{m\in[n]}\frac{\indeg_{n}(v_{m})+\beta_{n}}{\sum_{j\in[n]}\indeg_{n}(v_{j})+n\beta_{n}}\chi\left\{X_{m}=+1\right\}=\frac{B_{n}+\beta_{n}A_{n}}{C_{n}+n\beta_{n}}=q_{\alpha}\left(\frac{A_{n}}{n},\frac{B_{n}}{s_{n}},\frac{C_{n}}{s_{n}}\right),\label{success_probab_P_{n}_distribution_growing_size}
\end{align}

The computations in \eqref{A_{n+1}-A_{n}_cond_exp_fixed_size}, \eqref{B_{n+1}-B_{n}_cond_exp_fixed_size} and \eqref{C_{n+1}-C_{n}_cond_exp_fixed_size} can be emulated to obtain the following identities for the model in \S\ref{subsec:model_growing_sample_size}, for each $n\geqslant N$:
\begin{equation}\label{increments_cond_exp_growing_sample_size}
\begin{cases}
\E\left[A_{n+1}-A_{n}\big|\mathcal{F}_{n}\right]=&H_{1,n}\left(n^{-1}A_{n},s_{n}^{-1}B_{n},s_{n}^{-1}C_{n}\right),\\
\E\left[B_{n+1}-B_{n}\big|\mathcal{F}_{n}\right]=&H_{2,n}\left(n^{-1}A_{n},s_{n}^{-1}B_{n},s_{n}^{-1}C_{n}\right),\\\E\left[C_{n+1}-C_{n}\big|\mathcal{F}_{n}\right]=&H_{3,n}\left(n^{-1}A_{n},s_{n}^{-1}B_{n},s_{n}^{-1}C_{n}\right),\end{cases}
\end{equation}
with the functions $H_{1,n}$, $H_{2,n}$ and $H_{3,n}$ defined, for each $(a,b,c)\in\hat{\mathcal{S}}$ (with $\hat{\mathcal{S}}$ as in \eqref{domain_defn_growing_size}), as follows:
\begin{align}
H_{1,n}(a,b,c)={}&\sum_{i=0}^{k_{n}}g\left(\frac{i}{k_{n}}\right){k_{n}\choose i}q_{\alpha}^{i}(a,b,c)\left\{1-q_{\alpha}(a,b,c)\right\}^{k_{n}-i},\label{H_{1,n}_defn}\\ 
H_{2,n}(a,b,c)={}&\sum_{i=0}^{k_{n}}ig\left(\frac{i}{k_{n}}\right){k_{n}\choose i}q_{\alpha}^{i}(a,b,c)\left\{1-q_{\alpha}(a,b,c)\right\}^{k_{n}-i},\label{H_{2,n}_defn}\\
H_{3,n}(a,b,c)={}&\sum_{i=0}^{k_{n}}\left[i g\left(\frac{i}{k_{n}}\right)+\left(k_{n}-i\right)\left\{1-g\left(\frac{i}{k_{n}}\right)\right\}\right]{k_{n}\choose i}q_{\alpha}^{i}(a,b,c)\left\{1-q_{\alpha}(a,b,c)\right\}^{k_{n}-i}.\label{H_{3,n}_defn}
\end{align}
The model in \S\ref{subsec:model_growing_sample_size} gives rise to a stochastic approximation process captured by \eqref{sa_growing_1} and \eqref{sa_growing_2}: 
\begin{align}
{}&\frac{A_{n+1}}{n+1}-\frac{A_{n}}{n}-\frac{k_{n}}{s_{n+1}}\left[\hat{\epsilon}_{n+1,1}+\hat{\delta}_{n,1}\right]=\frac{k_{n}}{s_{n+1}}\cdot\frac{s_{n+1}}{(n+1)k_{n}}\hat{h}_{1}\left(\frac{A_{n}}{n},\frac{B_{n}}{s_{n}},\frac{C_{n}}{s_{n}}\right)\in\frac{k_{n}}{s_{n+1}}\cdot[\eta_{1},\eta_{2}]\hat{h}_{1}\left(\frac{A_{n}}{n},\frac{B_{n}}{s_{n}},\frac{C_{n}}{s_{n}}\right),\label{sa_growing_1}
\end{align}
where $\eta_{1}$ and $\eta_{2}$ are as defined in \eqref{k_{n}_sequence_conditions}, and
\begin{align}
{}&\begin{bmatrix}
s_{n+1}^{-1}B_{n+1}\\
s_{n+1}^{-1}C_{n+1}
\end{bmatrix}=\begin{bmatrix}
s_{n}^{-1}B_{n}\\
s_{n}^{-1}C_{n}
\end{bmatrix}+\frac{k_{n}}{s_{n+1}}\left(\begin{bmatrix}
\hat{\epsilon}_{n+1,2}\\
\hat{\epsilon}_{n+1,3}
\end{bmatrix}+\begin{bmatrix}
\hat{\delta}_{n,2}\\
\hat{\delta}_{n,3}
\end{bmatrix}+\begin{bmatrix}
\hat{h}_{2}\left(n^{-1}A_{n},s_{n}^{-1}B_{n},s_{n}^{-1}C_{n}\right)\\
\hat{h}_{3}\left(n^{-1}A_{n},s_{n}^{-1}B_{n},s_{n}^{-1}C_{n}\right)
\end{bmatrix}\right),\label{sa_growing_2}
\end{align}
where $\hat{h}_{1}$, $\hat{h}_{2}$ and $\hat{h}_{3}$ are as defined in \eqref{hat{h}_defn_growing_size}. We set
\begin{equation}\label{error_defns_sa_growing}
\begin{cases}
\hat{\epsilon}_{n+1,1}&=(n+1)^{-1}k_{n}^{-1}s_{n+1}\left\{A_{n+1}-A_{n}-H_{1,n}\left(n^{-1}A_{n},s_{n}^{-1}B_{n},s_{n}^{-1}C_{n}\right)\right\},\\ 
\hat{\delta}_{n,1}&=(n+1)^{-1}k_{n}^{-1}s_{n+1}\left\{H_{1,n}\left(n^{-1}A_{n},s_{n}^{-1}B_{n},s_{n}^{-1}C_{n}\right)-\hat{H}_{1}\left(n^{-1}A_{n},s_{n}^{-1}B_{n},s_{n}^{-1}C_{n}\right)\right\}\\
\hat{\epsilon}_{n+1,2}&=k_{n}^{-1}\left(B_{n+1}-B_{n}\right)-k_{n}^{-1}H_{2,n}\left(n^{-1}A_{n},s_{n}^{-1}B_{n},s_{n}^{-1}C_{n}\right),\\
\hat{\epsilon}_{n+1,3}&=k_{n}^{-1}\left(C_{n+1}-C_{n}\right)-k_{n}^{-1}H_{3,n}\left(n^{-1}A_{n},s_{n}^{-1}B_{n},s_{n}^{-1}C_{n}\right),\\
\hat{\delta}_{n,2}&=k_{n}^{-1}H_{2,n}\left(n^{-1}A_{n},s_{n}^{-1}B_{n},s_{n}^{-1}C_{n}\right)-\hat{H}_{2}\left(n^{-1}A_{n},s_{n}^{-1}B_{n},s_{n}^{-1}C_{n}\right),\\
\hat{\delta}_{n,3}&=k_{n}^{-1}H_{3,n}\left(n^{-1}A_{n},s_{n}^{-1}B_{n},s_{n}^{-1}C_{n}\right)-\hat{H}_{3}\left(n^{-1}A_{n},s_{n}^{-1}B_{n},s_{n}^{-1}C_{n}\right),
\end{cases}
\end{equation}
where $\hat{H}_{1}$, $\hat{H}_{2}$ and $\hat{H}_{3}$ are as defined in \eqref{hat{H}_{1}_defn}, \eqref{hat{H}_{2}_defn} and \eqref{hat{H}_{3}_defn} respectively. From \eqref{increments_cond_exp_growing_sample_size} and \eqref{error_defns_sa_growing}, it is evident that $\{\epsilon_{n+1,i}:n\geqslant N\}$ forms a martingale difference sequence for each $i\in[3]$. Combining \eqref{sa_growing_1} and \eqref{sa_growing_2}, we can write, with $F$ as defined in \eqref{set_valued_function}:
\begin{align}
\begin{bmatrix}
A_{n+1}/(n+1)\\
B_{n+1}/s_{n+1}\\
C_{n+1}/s_{n+1}
\end{bmatrix}-\begin{bmatrix}
A_{n}/n\\
B_{n}/s_{n}\\
C_{n}/s_{n}
\end{bmatrix}-\frac{k_{n}}{s_{n+1}}\left\{\begin{bmatrix}
\hat{\epsilon}_{n+1,1}\\
\hat{\epsilon}_{n+1,2}\\
\hat{\epsilon}_{n+1,3}
\end{bmatrix}+\begin{bmatrix}
\hat{\delta}_{n,1}\\
\hat{\delta}_{n,2}\\
\hat{\delta}_{n,3}
\end{bmatrix}\right\}\in\frac{k_{n}}{s_{n+1}}F\left(\begin{bmatrix}
n^{-1}A_{n}\\
s_{n}^{-1}B_{n}\\
s_{n}^{-1}C_{n}
\end{bmatrix}\right),\label{sa_growing_differential_inclusion}
\end{align}
which is of exactly the same form as Equation (III) of \cite{benaim2005stochastic}.  

\begin{proof}[Proof of Theorem~\ref{thm:main_3}]
We prove Theorem~\ref{thm:main_3} by verifying the assumptions mentioned in the statement of Theorem~\ref{thm:differential_inclusions}. The definition of $F$ in \eqref{set_valued_function} and the third assumption stated in \eqref{k_{n}_sequence_conditions}, i.e.\ $0<\eta_{1}\leqslant\eta_{2}<\infty$, together ensure that $F(a,b,c)$ is compact as well as convex for each $(a,b,c)\in\hat{\mathcal{S}}$. This verifies \eqref{C2}. Recall, from \eqref{hat{H}_{1}_defn}, \eqref{hat{H}_{2}_defn} and \eqref{hat{H}_{3}_defn}, and the remark following \eqref{hat{H}_{3}_defn}, that the function $\hat{H}$ maps the set $\hat{\mathcal{S}}$ to itself, so that $0\leqslant \hat{H}_{1}(a,b,c)\leqslant 1$ and $0\leqslant \hat{H}_{2}(a,b,c)\leqslant \hat{H}_{3}(a,b,c)\leqslant 1$ for each $(a,b,c)\in\hat{\mathcal{S}}$. These, along with the definition of $\hat{h}_{i}$, for each $i\in[3]$, from \eqref{hat{h}_defn_growing_size}, yield 
\begin{equation}
\hat{h}_{i}^{2}(a,b,c)\leqslant \hat{H}_{i}^{2}(a,b,c)+\max\{a,b,c\}^{2}\leqslant 2 \text{ for each }(a,b,c)\in\hat{\mathcal{S}},\nonumber
\end{equation}
for each $i\in[3]$. Therefore, for each $\eta\in[\eta_{1},\eta_{2}]$ and each $(a,b,c)\in\hat{\mathcal{S}}$, we have  
\begin{equation}
\left|\left|\begin{bmatrix}
\eta\hat{h}_{1}(a,b,c)\\
\hat{h}_{2}(a,b,c)\\
\hat{h}_{3}(a,b,c)
\end{bmatrix}\right|\right|=\sqrt{\eta^{2}\hat{h}_{1}^{2}(a,b,c)+\hat{h}_{2}^{2}(a,b,c)+\hat{h}_{3}^{2}(a,b,c)}\leqslant \sqrt{2\left(\eta_{2}^{2}+2\right)}\nonumber
\end{equation}
thereby ensuring that Assumption~\eqref{C3} holds. The second assumption stated in \eqref{k_{n}_sequence_conditions} ensures that $a_{n+1}=k_{n}/s_{n+1}\rightarrow 0$ as $n\rightarrow\infty$. This, along with the first assumption stated in \eqref{k_{n}_sequence_conditions}, ensures that Assumption~\eqref{C4} is satisfied in this set-up.

A comparison of \eqref{sa_growing_differential_inclusion} with \eqref{sa_differential_inclusion} reveals that $E_{n+1}=(\hat{\epsilon}_{n+1,1},\hat{\epsilon}_{n+1,2},\hat{\epsilon}_{n+1,3})^{T}+(\hat{\delta}_{n,1},\hat{\delta}_{n,2},\hat{\delta}_{n,3})^{T}$ for each $n\geqslant N$. Suppose each of $\sum_{n\geqslant N}s_{n+1}^{-1}k_{n}(\hat{\epsilon}_{n+1,1},\hat{\epsilon}_{n+1,2},\hat{\epsilon}_{n+1,3})$ and $\sum_{n\geqslant N}s_{n+1}^{-1}k_{n}\sum_{j\in[3]}|\hat{\delta}_{n,j}|$ converges almost surely. Let $\Omega_{0}$ indicate the subset of the sample space, say $\Omega$, on which both of these series converge, so that $\Omega\setminus\Omega_{0}$ is of measure $0$. We fix any $\omega\in\Omega_{0}$. Setting $S_{n}=\sum_{N\leqslant i\leqslant n}s_{i+1}^{-1}k_{i}(\hat{\epsilon}_{i+1,1}(\omega),\hat{\epsilon}_{i+1,2}(\omega),\hat{\epsilon}_{i+1,3}(\omega))$ for each $n\geqslant N$, we see that the sequence $\{S_{n}:n\geqslant N\}$ must be Cauchy, so that, given any $\epsilon>0$, there exists some $N_{\epsilon}\in\mathbb{N}$ such that $||S_{m}-S_{n}||<\epsilon$ for all $m,n\geqslant N_{\epsilon}$. Consequently, for each $n\geqslant N_{\epsilon}+1$,
\begin{align}
\sup\left\{\left|\left|\sum_{i=n}^{k-1}\frac{k_{i}}{s_{i+1}}\begin{bmatrix}
\hat{\epsilon}_{i+1,1}(\omega)\\
\hat{\epsilon}_{i+1,2}(\omega)\\
\hat{\epsilon}_{i+1,3}(\omega)
\end{bmatrix}\right|\right|:k\in\{n+1,\ldots,m(\tau_{n}+T)\}\right\}\leqslant\sup\left\{\left|\left|S_{k-1}-S_{n-1}\right|\right|:k\geqslant n+1\right\}\leqslant\epsilon.\nonumber
\end{align}    
On the other hand, the tail sums of the series $\sum_{n\geqslant N}s_{n+1}^{-1}k_{n}\sum_{j\in[3]}|\hat{\delta}_{n,j}(\omega)|$ converge to $0$, so that 
\begin{align}
\sup\left\{\left|\left|\sum_{i=n}^{k-1}\frac{k_{i}}{s_{i+1}}\begin{bmatrix}
\hat{\delta}_{i,1}(\omega)\\
\hat{\delta}_{i,2}(\omega)\\
\hat{\delta}_{i,3}(\omega)
\end{bmatrix}\right|\right|:k\in\{n+1,\ldots,m(\tau_{n}+T)\}\right\}\leqslant\sum_{i=n}^{\infty}\frac{k_{i}}{s_{i+1}}\left|\left|\begin{bmatrix}
\hat{\delta}_{i,1}(\omega)\\
\hat{\delta}_{i,2}(\omega)\\
\hat{\delta}_{i,3}(\omega)
\end{bmatrix}\right|\right|\rightarrow 0 \text{ as }n\rightarrow\infty.\nonumber
\end{align}
Combining these two observations, we now conclude that, if each of $\sum_{n\geqslant N}s_{n+1}^{-1}k_{n}(\hat{\epsilon}_{n+1,1},\hat{\epsilon}_{n+1,2},\hat{\epsilon}_{n+1,3})^{T}$ and $\sum_{n\geqslant N}s_{n+1}^{-1}k_{n}\sum_{j\in[3]}|\hat{\delta}_{n,j}|$ converges almost surely, then \eqref{error_sequence_cond_differential_inclusions} holds, thereby ensuring that \eqref{C5} is satisfied. Therefore, the convergence of each of these two series is precisely what we verify in what follows.

Keeping in mind that $A_{n+1}-A_{n}$ lies in $\{0,1\}$, while each of $B_{n+1}-B_{n}$ and $C_{n+1}-C_{n}$ lies in $\{0,1,\ldots,k_{n}\}$, we have, from \eqref{increments_cond_exp_growing_sample_size}, \eqref{error_defns_sa_growing} and the second criterion stated in \eqref{k_{n}_sequence_conditions}: 
\begin{equation}
\sum_{n\geqslant N}\frac{k_{n}^{2}}{s_{n+1}^{2}}\left(\E\left[\hat{\epsilon}_{n+1,1}^{2}\big|\mathcal{F}_{n}\right]+\E\left[\hat{\epsilon}_{n+1,2}^{2}\big|\mathcal{F}_{n}\right]+\E\left[\hat{\epsilon}_{n+1,3}^{2}\big|\mathcal{F}_{n}\right]\right)\leqslant \sum_{n\geqslant N}\frac{1}{(n+1)^{2}}+2\sum_{n\geqslant N}\frac{k_{n}^{2}}{s_{n+1}^{2}}<\infty.\nonumber
\end{equation}
By \eqref{increments_cond_exp_growing_sample_size}, we know that $\{s_{n+1}^{-1}k_{n}(\hat{\epsilon}_{n+1,1},\hat{\epsilon}_{n+1,2},\hat{\epsilon}_{n+1,3})^{T}:n\geqslant N\}$ forms a martingale difference sequence, so that by Theorem 4.5.2.\ of \cite{durrett2019probability}, we conclude that $\sum_{n\geqslant N}s_{n+1}^{-1}k_{n}(\hat{\epsilon}_{n+1,1},\hat{\epsilon}_{n+1,2},\hat{\epsilon}_{n+1,3})^{T}$ converges almost surely.  

We now come to the analysis of the rate(s) at which $\hat{\delta}_{n,i}$, for each $i\in[3]$, decays as $n\rightarrow\infty$. The key is to observe that $H_{1,n}$ is a Bernstein polynomial approximation to $\hat{H}_{1}$, whereas $k_{n}^{-1}\hat{H}_{i,n}$ is a Bernstein polynomial approximation to $\hat{H}_{i}$ for $i\in\{2,3\}$, along the sequence $\{k_{n}\}$. We now consider three different cases:
\begin{enumerate}
\item When $g$ is assumed to be merely Lipschitz on $[0,1]$, it is immediate that each of $\hat{H}_{1}$, $\hat{H}_{2}$ and $\hat{H}_{3}$, defined in \eqref{hat{H}_{1}_defn}, \eqref{hat{H}_{2}_defn} and \eqref{hat{H}_{3}_defn}, is Lipschitz on $\hat{\mathcal{S}}$ as well. By \cite{mathe1999approximation} and the inequality that $x(1-x)\leqslant 1/4$ for all $x\in[0,1]$, each of $(n+1)k_{n}s_{n+1}^{-1}|\hat{\delta}_{n,1}|$, $|\hat{\delta}_{n,2}|$ and $|\hat{\delta}_{n,3}|$ is $O(k_{n}^{-1/2})$, so that
\begin{align}
\sum_{n\geqslant N}\frac{k_{n}}{s_{n+1}}\sum_{i\in[3]}\left|\hat{\delta}_{n,i}\right|=O\left(\sum_{n\geqslant N}\frac{k_{n}^{-1/2}}{(n+1)}\right)+O\left(\sum_{n\geqslant N}\frac{k_{n}^{1/2}}{s_{n+1}}\right),\nonumber
\end{align}
which converges because of the criteria stated in \eqref{A1}.

\item Suppose $g\in\mathcal{C}^{(1)}[0,1]$, which, along with the definition of $q_{\alpha}$ in \eqref{q_defn} and the fact that $\alpha>0$, ensures that each of $\hat{H}_{1}$, $\hat{H}_{2}$ and $\hat{H}_{3}$ is in $\mathcal{C}^{(1)}(\hat{\mathcal{S}})$. By Theorem 1.6.2.\ of \cite{lorentz2012bernstein}, we have
\begin{align} 
{}&\frac{k_{n}}{s_{n+1}}\left|\hat{\delta}_{n,1}\right|\leqslant\frac{1}{n+1}\sup\left\{\left|H_{1,n}(a,b,c)-\hat{H}_{1}(a,b,c)\right|:(a,b,c)\in\hat{\mathcal{S}}\right\}\leqslant \frac{3k_{n}^{-1/2}}{4(n+1)}\omega_{g'}\left(k_{n}^{-1/2}\right),\nonumber\\
{}&\frac{k_{n}}{s_{n+1}}\left|\hat{\delta}_{n,2}\right|\leqslant\frac{k_{n}}{s_{n+1}}\sup\left\{\left|\frac{1}{k_{n}}H_{2,n}(a,b,c)-\hat{H}_{2}(a,b,c)\right|:(a,b,c)\in\hat{\mathcal{S}}\right\}\leqslant \frac{3k_{n}^{1/2}}{4s_{n+1}}\omega_{\varphi'}\left(k_{n}^{-1/2}\right),\nonumber\\
{}&\frac{k_{n}}{s_{n+1}}\left|\hat{\delta}_{n,3}\right|\leqslant\frac{k_{n}}{s_{n+1}}\sup\left\{\left|\frac{1}{k_{n}}H_{3,n}(a,b,c)-\hat{H}_{3}(a,b,c)\right|:(a,b,c)\in\hat{\mathcal{S}}\right\}\leqslant \frac{3k_{n}^{1/2}}{4s_{n+1}}\omega_{\psi'}\left(k_{n}^{-1/2}\right),\nonumber
\end{align}
where the functions $\varphi$ and $\psi$ are as defined in \eqref{A2}. Right away, we see that $\sum_{n\geqslant N}s_{n+1}^{-1}k_{n}\sum_{i\in[3]}|\hat{\delta}_{n,i}|$ converges if each of the series in \eqref{C^{1}_convergence_criteria} converges. Simple computations show, for all $x,y\in[0,1]$: 
\begin{align}
{}&\text{each of }|\varphi'(x)-\varphi'(y)|\text{ and }|\psi'(x)-\psi'(y)|\text{ is }O(|g'(x)-g'(y)|)+O(|x-y|)\nonumber\\
\implies{}&\text{each of }\omega_{\varphi'}(k_{n}^{-1/2})\text{ and }\omega_{\psi'}(k_{n}^{-1/2})\text{ is }O(\omega_{g'}(k_{n}^{-1/2}))+O(k_{n}^{-1/2}),\nonumber
\end{align}
\sloppy so that each of the last two series in \eqref{C^{1}_convergence_criteria} converges whenever each of the two series $\sum_{n\geqslant N}s_{n+1}^{-1}$ and $\sum_{n\geqslant N}s_{n+1}^{-1}k_{n}^{1/2}\omega_{g'}(k_{n}^{-1/2})$ converges, showing the relevance of the last part of \eqref{A2}.

\item Finally, suppose $g\in\mathcal{C}^{(2)}[0,1]$, which, as in the previous case, is enough to ensure that each of $\hat{H}_{1}$, $\hat{H}_{2}$ and $\hat{H}_{3}$ is in $\mathcal{C}^{(2)}(\hat{\mathcal{S}})$. By the result (from \cite{voronovskaja1932determination}) stated in \S 1.6.1.\ of \cite{lorentz2012bernstein}, each of $(n+1)k_{n}s_{n+1}^{-1}|\hat{\delta}_{n,1}|$, $|\hat{\delta}_{n,2}|$ and $|\hat{\delta}_{n,3}|$ is $O(k_{n}^{-1})$, so that 
\begin{align} 
\sum_{n\geqslant N}\frac{k_{n}}{s_{n+1}}\sum_{i\in[3]}\left|\hat{\delta}_{n,i}\right|=O\left(\sum_{n\geqslant N}\frac{k_{n}^{-1}}{(n+1)}\right)+O\left(\sum_{n\geqslant N}\frac{1}{s_{n+1}}\right),\nonumber
\end{align}
which converges because of the criteria stated in \eqref{A3}.
\end{enumerate}
We may, thus, conclude that whenever one of \eqref{A1}, \eqref{A2} and \eqref{A3} holds, the series $\sum_{n\geqslant N}s_{n+1}^{-1}k_{n}\sum_{i\in[3]}|\hat{\delta}_{n,i}|$ converges. As justified above, this concludes the verification of Assumption~\eqref{C5}, and the conclusion of Theorem~\ref{thm:main_3} follows from the conclusion drawn in Theorem~\ref{thm:differential_inclusions}.
\end{proof}

\begin{lemma}\label{lem:strongly_positively_invariant_growing_sample_size}
Recall $\eta_{2}$ defined in \eqref{k_{n}_sequence_conditions}. As long as $\eta_{2}\leqslant 1$, the set $\hat{\mathcal{S}}$, defined in \eqref{domain_defn_growing_size}, is strongly positively invariant corresponding to the differential inclusion $(\dot{a}(t),\dot{b}(t),\dot{c}(t))\in F(a(t),b(t),c(t))$ for all $t\geqslant 0$, where $F$ is as defined in \eqref{set_valued_function}. 
\end{lemma}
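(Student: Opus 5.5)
We will mirror the proof of Lemma~\ref{lem:positively_invariant_domain}, replacing Nagumo's theorem for ODEs by its analogue for differential inclusions. The set $\hat{\mathcal{S}}$ is compact and convex. For strong positive invariance we need \emph{every} solution started in $\hat{\mathcal{S}}$ to remain there. The natural sufficient condition is the tangency requirement $F(\mathbf{x})\subset\mathcal{T}_{\hat{\mathcal{S}}}(\mathbf{x})$ for every $\mathbf{x}\in\hat{\mathcal{S}}$. First we will extend $F$ outside $\hat{\mathcal{S}}$. We extend $\hat{h}$ Lipschitz-continuously to $\tilde{h}$ on $\mathbb{R}^{3}$ via Kirszbraun's Theorem; this uses that $q_{\alpha}$ is smooth on $\hat{\mathcal{S}}$ since $\alpha>0$, and we will assume $g$ Lipschitz, as in \eqref{A1}--\eqref{A3}. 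We then set $\tilde{F}(\mathbf{x})=\{(\eta\tilde{h}_{1},\tilde{h}_{2},\tilde{h}_{3})(\mathbf{x}):\eta\in[\eta_{1},\eta_{2}]\}$. This map is Lipschitz in the Hausdorff metric with compact convex values, so the invariance theorem for Lipschitz differential inclusions applies: a closed set is strongly invariant iff $\tilde{F}(\mathbf{x})\subset\mathcal{T}_{\hat{\mathcal{S}}}(\mathbf{x})$ for all $\mathbf{x}$ in the set (Clarke--Ledyaev--Stern--Wolenski, or Aubin--Cellina).

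The key step is to verify this tangency condition, and for a convex set it suffices to show that $\mathbf{x}+t\mathbf{v}\in\hat{\mathcal{S}}$ for some small $t>0$. Fix $(a,b,c)\in\hat{\mathcal{S}}$ and $\eta\in[\eta_{1},\eta_{2}]$. For $t\in(0,1]$ consider
\[
(a,b,c)+t\bigl(\eta\hat{h}_{1},\hat{h}_{2},\hat{h}_{3}\bigr)=\bigl((1-t\eta)a+t\eta\hat{H}_{1},\ (1-t)b+t\hat{H}_{2},\ (1-t)c+t\hat{H}_{3}\bigr),
\]
with the $\hat{H}_{i}$ evaluated at $(a,b,c)$. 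Since $\eta\leqslant\eta_{2}\leqslant1$, we have $t\eta\in[0,1]$. So the first coordinate is a convex combination of $a$ and $\hat{H}_{1}\in[0,1]$, and it lies in $[0,1]$. The last two coordinates are the same convex combination (weight $t$) of $(b,c)$ and $(\hat{H}_{2},\hat{H}_{3})$. Both pairs lie in $\{(y,z)\in[0,1]^{2}:z\geqslant y\}$, because $\hat{H}$ maps $\hat{\mathcal{S}}$ into itself, so $0\leqslant\hat{H}_{2}\leqslant\hat{H}_{3}\leqslant1$; this set is convex. Hence the point lies in $\hat{\mathcal{S}}$, and every element of $F(a,b,c)$ belongs to $\mathcal{T}_{\hat{\mathcal{S}}}(a,b,c)$. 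This is exactly where $\eta_{2}\leqslant1$ is used. (In fact for the first coordinate one only needs $t\eta\leqslant1$ for small $t$, so this is mild; the condition is still convenient for the clean convex-combination argument.)

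The main obstacle is justifying the set-valued Nagumo step: strong rather than weak invariance requires Lipschitz regularity of $\tilde{F}$, hence of $g$. If $g$ is merely continuous, I would instead argue directly. Since $\eta$ may vary measurably in time, write any solution as $\dot{a}=\eta(t)(\hat{H}_{1}-a)$, $\dot{b}=\hat{H}_{2}-b$, $\dot{c}=\hat{H}_{3}-c$. For a solution leaving $\hat{\mathcal{S}}$, take the last exit time. Near the boundary, the relevant quantities $a$, $1-a$, $b$, $1-c$ and $c-b$ each satisfy a differential inequality of the form $\dot{u}\geqslant-\kappa|u|$ up to the defect from the extension. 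For example, $\tfrac{d}{dt}(c-b)=(\hat{H}_{3}-\hat{H}_{2})-(c-b)\geqslant-(c-b)$. A Gronwall argument applied to the distance to $\hat{\mathcal{S}}$, using $\dist(\mathbf{x}+t\tilde{F}(\mathbf{x}),\hat{\mathcal{S}})\leqslant\dist(\mathbf{x},\hat{\mathcal{S}})+o(t)$ uniformly, then shows the distance stays zero.
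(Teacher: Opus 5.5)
Your argument is essentially the paper's: the paper uses the same convex-combination computation to show that $F(a,b,c)$ lies in the contingent cone of $\hat{\mathcal{S}}$ at every $(a,b,c)\in\hat{\mathcal{S}}$, and then invokes an invariance theorem for differential inclusions (Theorem 5.2.1 of Aubin) to conclude strong positive invariance. Your Kirszbraun extension is a genuine addition, since it exposes the Lipschitz hypothesis that the paper's citation leaves implicit, and you are right that $\eta_{2}\leqslant 1$ is only a convenience, because $t\leqslant\min\{1,\eta_{2}^{-1}\}$ suffices. Your continuous-$g$ fallback is only a sketch and would need a specific extension such as $\hat{H}\circ\pi_{\hat{\mathcal{S}}}$, for which the distance to $\hat{\mathcal{S}}$ is nonincreasing by the projection inequality; it is not needed, though, because the paper applies the lemma only with $g\in\mathcal{C}^{(1)}[0,1]$.
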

\begin{proof}
Recall that, given $(a,b,c)\in\hat{\mathcal{S}}$, a solution $\varphi=\{\varphi(t):t\geqslant 0\}$ to the differential inclusion $(\dot{a}(t),\dot{b}(t),\dot{c}(t))\in F(a(t),b(t),c(t))$ for all $t\geqslant 0$, with $(a(0),b(0),c(0))=(a,b,c)$, is such that 
\begin{enumerate*}
\item $t\mapsto \varphi(t)$ is absolutely continuous in $t\in[0,\infty)$, and
\item $\dot{\varphi}(t)\in F(\varphi(t))$ for almost every $t\geqslant 0$, with $\varphi(0)=(a,b,c)$.
\end{enumerate*}
Let $\Phi(a,b,c)$ denote the set of all such solutions $\varphi$. Our goal, here, is to show that $\varphi(t)\in\hat{\mathcal{S}}$ for every $t\geqslant 0$ and every $\varphi\in\Phi(a,b,c)$, for each $(a,b,c)\in\hat{\mathcal{S}}$. To this end, we make use of Theorem 5.2.1 of \cite{Aubin1991}.

For $(a,b,c)\in\hat{\mathcal{S}}$, a typical element of $F(a,b,c)$ can be written as $(\eta\hat{h}_{1}(a,b,c),\hat{h}_{2}(a,b,c),\hat{h}_{3}(a,b,c))$ for \emph{some} $\eta\in[\eta_{1},\eta_{2}]$. Using the definitions introduced in \eqref{hat{h}_defn_growing_size}, we obtain:
\begin{align}
\begin{bmatrix}
a\\
b\\
c
\end{bmatrix}+t\begin{bmatrix}
\eta\hat{h}_{1}(a,b,c)\\
\hat{h}_{2}(a,b,c)\\
\hat{h}_{3}(a,b,c)
\end{bmatrix}=\begin{bmatrix}
(1-\eta t)a+\eta t\hat{H}_{1}(a,b,c)\\
(1-t)b+t\hat{H}_{2}(a,b,c)\\
(1-t)c+t\hat{H}_{3}(a,b,c)
\end{bmatrix},\label{convex_combination}
\end{align}
and keeping in mind that $\hat{H}=(\hat{H}_{1},\hat{H}_{2},\hat{H}_{3})$ maps $\hat{\mathcal{S}}$ to itself (as explained right after \eqref{hat{H}_{3}_defn}) and that $\eta\in[\eta_{1},\eta_{2}]\implies 0<\eta\leqslant 1$, we see that the final tuple in \eqref{convex_combination} is an element of $\hat{\mathcal{S}}$ for all $t\in(0,1]$ -- therefore, the distance between the set $\hat{\mathcal{S}}$ and the final tuple in \eqref{convex_combination} equals $0$. Thus, the cotingent cone to $\hat{\mathcal{S}}$ at any $(a,b,c)\in\hat{\mathcal{S}}$ (see Definition 5.1.1 of \cite{Aubin1991}) contains $F(a,b,c)$. By Theorem 5.2.1 of \cite{Aubin1991}, we conclude that $\hat{\mathcal{S}}$ is strongly positively invariant, i.e.\ $\varphi(t)\in\hat{\mathcal{S}}$ for all $t\geqslant 0$, for every $\varphi\in\Phi(a,b,c)$, for each $(a,b,c)\in\hat{\mathcal{S}}$.  
\end{proof}

\begin{proof}[Proof of Theorem~\ref{thm:main_5}]
The proof of Theorem~\ref{thm:main_5} consists of two parts: 
\begin{enumerate*}
\item coming up with a suitable Lyapunov function $V:\hat{\mathcal{S}}\rightarrow\mathbb{R}$, with $\hat{\mathcal{S}}$ as defined in \eqref{domain_defn_growing_size}, such that $V$ is non-negative everywhere on $\hat{\mathcal{S}}$ and its time-derivative, $\dot{V}$, is strictly negative on $\hat{\mathcal{S}}\setminus\hat{\Lambda}$ and zero on $\hat{\Lambda}$, where $\hat{\Lambda}$ is as defined in \eqref{hat{Lambda}_defn};
\item then showing, using $V$, that \emph{every} compact, connected, internally chain transitive invariant set must be contained in $\hat{\Lambda}$.
\end{enumerate*}

\sloppy \textbf{Coming up with a suitable Lyapunov function:} For $(a(t),b(t),c(t))$ satisfying the differential inclusion $(\dot{a}(t),\dot{b}(t),\dot{c}(t))\in F(a(t),b(t),c(t))$ for all $t\geqslant 0$, with $F$ as defined in \eqref{set_valued_function}, we, henceforth, write $\dot{a}(t)=\eta(t)\{g(q_{\alpha}(a(t),b(t),c(t)))-a(t)\}$ for $\eta(t)\in[\eta_{1},\eta_{2}]$. Using this, \eqref{hat{h}_defn_growing_size}, \eqref{hat{H}_{2}_defn}, \eqref{hat{H}_{3}_defn} and \eqref{hat{G}_defn}, we compute the time-derivative of $q_{\alpha}(a(t),b(t),c(t))$ by differentiating both sides of the identity $q_{\alpha}(a(t),b(t),c(t))\{c(t)+\alpha\}=b(t)+\alpha a(t)$ and substantial simplification:
\begin{align}
{}&\dot{q}_{\alpha}(a(t),b(t),c(t))\{c(t)+\alpha\}+q_{\alpha}(a(t),b(t),c(t))\dot{c}(t)=\dot{b}(t)+\alpha \dot{a}(t)\nonumber\\
\Longleftrightarrow{}&\dot{q}_{\alpha}(a(t),b(t),c(t))=\frac{\hat{G}\left(q_{\alpha}(a(t),b(t),c(t))\right)+\alpha\{1-\eta(t)\}\left\{a(t)-g(q_{\alpha}(a(t),b(t),c(t)))\right\}}{c(t)+\alpha}.\label{q_{alpha}_time_derivative}
\end{align}

Let us now define, for each $(a,b,c)\in\hat{\mathcal{S}}$, and for a suitable $\kappa>0$ to be chosen eventually, the function
\begin{align}
V(a,b,c)={}&-\int_{q^{*}}^{q_{\alpha}(a,b,c)}\hat{G}(s)ds+\frac{\kappa}{2}\left\{a-g\left(q_{\alpha}(a,b,c)\right)\right\}^{2}.\label{candidate_Lyapunov}
\end{align}
From \eqref{hat{G}_defn} and two of the assumptions mentioned in the statement of Theorem~\ref{thm:main_5}, we have $\hat{G}(0)=\alpha g(0)>0$ and $\hat{G}(1)=-\alpha\{1-g(1)\}<0$. Along with the assumption that $\hat{G}$ has a unique root in $[0,1]$, this ensures that $\hat{G}(q)>0$ for all $q\in[0,q^{*})$ and $\hat{G}(q)<0$ for all $q\in(q^{*},1]$. Therefore, the integrated term in \eqref{candidate_Lyapunov} is strictly positive for all $q_{\alpha}(a,b,c)\neq q^{*}$. Even when we have $q_{\alpha}(a,b,c)= q^{*}$, the second term in \eqref{candidate_Lyapunov} is strictly positive for all $a\neq a^{*}=g(q^{*})$. These observations lead to the conclusion that $V(a,b,c)>0$ for all $(a,b,c)\in\hat{\mathcal{S}}\setminus\hat{\Lambda}$.

Using \eqref{q_{alpha}_time_derivative} and the Leibniz rule for differentiation of an integral, we compute the time-derivative
\begin{align}
{}&\dot{V}(a(t),b(t),c(t))=-\dot{q}_{\alpha}(a(t),b(t),c(t))\hat{G}(q_{\alpha}(a(t),b(t),c(t)))+\kappa\{\dot{a}(t)-\dot{q}_{\alpha}(a(t),b(t),c(t))\nonumber\\&g'\left(q_{\alpha}(a(t),b(t),c(t))\right)\}\left\{a(t)-g\left(q_{\alpha}(a(t),b(t),c(t))\right)\right\}\nonumber\\
={}&-\frac{\hat{G}^{2}\left(q_{\alpha}(a(t),b(t),c(t))\right)+\alpha\{1-\eta(t)\}\left\{a(t)-g(q_{\alpha}(a(t),b(t),c(t)))\right\}\hat{G}(q_{\alpha}(a(t),b(t),c(t)))}{c(t)+\alpha}\nonumber\\&-\frac{\kappa g'\left(q_{\alpha}(a(t),b(t),c(t))\right)\hat{G}\left(q_{\alpha}(a(t),b(t),c(t))\right)\left\{a(t)-g\left(q_{\alpha}(a(t),b(t),c(t))\right)\right\}}{c(t)+\alpha}\nonumber\\&-\frac{\kappa g'\left(q_{\alpha}(a(t),b(t),c(t))\right)\alpha\{1-\eta(t)\}\left\{a(t)-g(q_{\alpha}(a(t),b(t),c(t)))\right\}^{2}}{c(t)+\alpha}\nonumber\\&-\kappa\eta(t)\{g(q_{\alpha}(a(t),b(t),c(t)))-a(t)\}^{2}\nonumber\\
={}&-\frac{1}{c(t)+\alpha}\begin{bmatrix}
\hat{G}(q_{\alpha}(a(t),b(t),c(t)))\\
a(t)-g(q_{\alpha}(a(t),b(t),c(t)))
\end{bmatrix}^{T}\Sigma\begin{bmatrix}
\hat{G}(q_{\alpha}(a(t),b(t),c(t)))\\
a(t)-g(q_{\alpha}(a(t),b(t),c(t)))
\end{bmatrix},\label{V_time_derivative}
\end{align}
where $\Sigma=\Sigma(a(t),b(t),c(t))$ is a $2\times 2$ symmetric matrix, with $\sigma_{i,j}$ indicating the entry in the $i$-th row and $j$-th column for $i,j\in[2]$, given by
\begin{equation}\label{Sigma_defn}
\begin{cases}
&\sigma_{1,1}=1,\quad \sigma_{1,2}=\left[\alpha\{1-\eta(t)\}+\kappa g'\left(q_{\alpha}(a(t),b(t),c(t))\right)\right]/2,\\
& \sigma_{2,2}=\kappa g'\left(q_{\alpha}(a(t),b(t),c(t))\right)\alpha\{1-\eta(t)\}+\kappa \eta(t)\{c(t)+\alpha\}.
\end{cases}
\end{equation}
At this point, our goal is to ensure that, for suitable choices of $\kappa$, the matrix $\Sigma$ is positive definite. This, in turn, would ensure that $\dot{V}(a(t),b(t),c(t))\leqslant 0$, with equality if and only if $\hat{G}(q_{\alpha}(a(t),b(t),c(t)))=0$ and $a(t)=g(q_{\alpha}(a(t),b(t),c(t)))$, which, in turn, is true if and only if $q_{\alpha}(a(t),b(t),c(t))=q^{*}$ and $a(t)=a^{*}=g(q^{*})$, or, in other words, $(a(t),b(t),c(t))\in\hat{\Lambda}$, where $\hat{\Lambda}$ is as defined in \eqref{hat{Lambda}_defn}. Since $\sigma_{1,1}>0$, it suffices to ensure that $\det(\Sigma)>0$. Since $c(t)\geqslant 0$ (because $(a(t),b(t),c(t))\in\hat{\mathcal{S}}$, with $\hat{\mathcal{S}}$ as defined in \eqref{domain_defn_growing_size}), we see that $\det(\Sigma)$ is bounded below by: 
\begin{align}
-\frac{1}{4}\left[\kappa^{2}{g'(q_{\alpha}(a(t),b(t),c(t)))}^{2}-\kappa\{4\eta(t)\alpha+2g'(q_{\alpha}(a(t),b(t),c(t)))\alpha\{1-\eta(t)\}\}+\alpha^{2}\{1-\eta(t)\}^{2}\right].\label{quadratic}
\end{align}
Our task, now, is to choose $\kappa$ such that the expression in \eqref{quadratic} is stritctly positive for all $(a(t),b(t),c(t))$ (and consequently, for the corresponding values of $\eta(t)$). 

To begin with, we note that if $g$ is constant throughout $[0,1]$, then the expression in \eqref{quadratic} boils down to $\kappa\eta(t)\alpha-\alpha^{2}\{1-\eta(t)\}^{2}/4$, so that we must choose $\kappa>\eta(t)^{-1}\alpha\{1-\eta(t)\}^{2}/4$. The function $\eta^{-1}(1-\eta)^{2}$ is strictly decreasing in $\eta\in[\eta_{1},\eta_{2}]$ since $\eta_{2}\leqslant 1$, so that its maximum value is attained at $\eta=\eta_{1}$, and it suffices, therefore, to select $\kappa>\alpha\eta_{1}^{-1}(1-\eta_{1})^{2}/4$.

We now assume that $g$ is \emph{not} constant throughout $[0,1]$. Since the function $g$ has been assumed to be in $\mathcal{C}^{(1)}[0,1]$, as well as monotonically increasing, in the statement of Theorem~\ref{thm:main_5}, there exist some $0\leqslant\delta\leqslant\upsilon<\infty$, with $\upsilon>0$, such that $\delta\leqslant g'(q_{\alpha}(a,b,c))\leqslant \upsilon$ for all $(a,b,c)\in\hat{\mathcal{S}}$. At this point, we set 
\begin{align}
{}&\epsilon=\delta \text{ if } 0<\delta\leqslant\upsilon<\delta+\frac{4}{1-\eta_{1}}\left\{\eta_{1}+\sqrt{\eta_{1}^{2}+\eta_{1}(1-\eta_{1})\delta}\right\},\label{case_1}\\
{}&\text{any } \epsilon \text{ with } 0<\epsilon\leqslant\upsilon<\epsilon+\frac{4}{1-\eta_{1}}\left\{\eta_{1}+\sqrt{\eta_{1}^{2}+\eta_{1}(1-\eta_{1})\epsilon}\right\} \text{ and } \epsilon<\frac{2\eta_{1}}{1-\eta_{1}}, \text{ otherwise}.\label{restrictions_on_epsilon}
\end{align}
That an $\epsilon$ satisfying \eqref{restrictions_on_epsilon} can be chosen while having $\upsilon$ conform to the restrictions relative to $\eta_{1}$ in the statement of Theorem~\ref{thm:main_5}, can be justified as follows. Let us write $\epsilon=r_{1}(1-\eta_{1})^{-1}\eta_{1}$ and $\upsilon=r_{2}(1-\eta_{1})^{-1}\eta_{1}$. According to the statement of Theorem~\ref{thm:main_5}, we must have $r_{2}\leqslant (1+\sqrt{5})$. On the other hand, the first sequence of inequalities in \eqref{restrictions_on_epsilon} boils down to 
\begin{equation}
0<r_{1}\leqslant r_{2}<4+r_{1}+4\sqrt{1+r_{1}},\nonumber
\end{equation}
and since $(1+\sqrt{5})<4$, we can choose \emph{any} $0<r_{1}\leqslant r_{2}$ with $r_{1}<2$. When we are in the scenario given by \eqref{restrictions_on_epsilon}, i.e.\ when $\upsilon\geqslant\delta+4(1-\eta_{1})^{-1}\{\eta_{1}+\sqrt{\eta_{1}^{2}+\eta_{1}(1-\eta_{1})\delta}\}$, we work with two sub-intervals, namely $[0,\epsilon]$ (which covers $[\delta,\epsilon]$) and $[\epsilon,\upsilon]$.

We focus on the expression in \eqref{quadratic}, when $g'(q_{\alpha}(a(t),b(t),c(t)))\in[\epsilon,\upsilon]$. For the sake of brevity, we rewrite the expression in \eqref{quadratic}, replacing $g'(q_{\alpha}(a(t),b(t),c(t)))$ by $y$ and $\eta(t)$ by $\eta$, to obtain:
\begin{align}
\det(\Sigma)\geqslant-\frac{1}{4}\left[\kappa^{2}y^{2}-\kappa\{4\eta\alpha+2y\alpha(1-\eta)\}+\alpha^{2}(1-\eta)^{2}\right],\label{quadratic_rewritten}
\end{align}
and the lower bound in \eqref{quadratic_rewritten} is strictly positive as long as $\kappa$ lies strictly between the roots of this quadratic polynomial, or, in other words, as long as (note that the discriminant is positive since $0<\eta_{1}\leqslant\eta_{2}\leqslant 1$)
\begin{align}
{}&\frac{\alpha\left\{2\eta+y(1-\eta)-2\sqrt{\eta^{2}+\eta(1-\eta)y}\right\}}{y^{2}}<\kappa<\frac{\alpha\left\{2\eta+y(1-\eta)+2\sqrt{\eta^{2}+\eta(1-\eta)y}\right\}}{y^{2}}\nonumber\\
\Longleftrightarrow{}&\alpha\left(\frac{\sqrt{\eta+(1-\eta)y}-\sqrt{\eta}}{y}\right)^{2}<\kappa<\alpha\left(\frac{\sqrt{\eta+(1-\eta)y}+\sqrt{\eta}}{y}\right)^{2} \text{ for all }\eta\in[\eta_{1},\eta_{2}] \text{ and }y\in[\epsilon,\upsilon].\label{between_roots_epsilon_upsilon_interval}
\end{align}
We now examine the behaviour of each of the lower and upper bounds in \eqref{between_roots_epsilon_upsilon_interval}, first as a function of $y$, and then as a function of $\eta$. Note that
\begin{align}
{}&\frac{d}{dy}\frac{\sqrt{\eta+(1-\eta)y}-\sqrt{\eta}}{y}
=\frac{-\{\sqrt{\eta+(1-\eta)y}-\sqrt{\eta}\}^{2}}{2y^{2}\sqrt{\eta+(1-\eta)y}},\nonumber\\
{}&\frac{d}{dy}\frac{\sqrt{\eta+(1-\eta)y}+\sqrt{\eta}}{y}
=\frac{-\{\sqrt{\eta+(1-\eta)y}+\sqrt{\eta}\}^{2}}{2y^{2}\sqrt{\eta+(1-\eta)y}},\nonumber
\end{align}
allowing us to conclude that, for each fixed $\eta\in[\eta_{1},\eta_{2}]$,
\begin{align}
{}&\sup_{y\in[\epsilon,\upsilon]}\alpha\left(\frac{\sqrt{\eta+(1-\eta)y}-\sqrt{\eta}}{y}\right)^{2}=\alpha\left(\frac{\sqrt{\eta+(1-\eta)\epsilon}-\sqrt{\eta}}{\epsilon}\right)^{2},\label{sup_lower_bound_y}\\
{}&\inf_{y\in[\epsilon,\upsilon]}\alpha\left(\frac{\sqrt{\eta+(1-\eta)y}+\sqrt{\eta}}{y}\right)^{2}=\alpha\left(\frac{\sqrt{\eta+(1-\eta)\upsilon}+\sqrt{\eta}}{\upsilon}\right)^{2}.\label{inf_upper_bound_y}
\end{align}
Next, we examine the expression on the right side of each of \eqref{sup_lower_bound_y} and \eqref{inf_upper_bound_y} as a function of $\eta$. Note that
\begin{equation}\label{lower_bound_derivative_eta}
\frac{d}{d\eta}\left\{\sqrt{\eta+(1-\eta)\epsilon}-\sqrt{\eta}\right\}=
\begin{cases}
\dfrac{1-\epsilon}{2\sqrt{\eta+(1-\eta)\epsilon}}-\dfrac{1}{2\sqrt{\eta}} &\text{when } \epsilon> 1,\\
\dfrac{\epsilon\{-\eta(1-\epsilon)-1\}}{2\sqrt{\eta+(1-\eta)\epsilon}\sqrt{\eta}\left\{\sqrt{\eta}(1-\epsilon)+\sqrt{\eta+(1-\eta)\epsilon}\right\}} &\text{when }\epsilon\leqslant 1,
\end{cases}
\end{equation}
showing, right away, that the right side of \eqref{lower_bound_derivative_eta} is always non-positive. On the other hand, we have
\begin{equation}\label{upper_bound_derivative_eta}
\frac{d}{d\eta}\left\{\sqrt{\eta+(1-\eta)\upsilon}+\sqrt{\eta}\right\}=
\begin{cases}
\dfrac{1-\upsilon}{2\sqrt{\eta+(1-\eta)\upsilon}}+\dfrac{1}{2\sqrt{\eta}}&\text{when }\upsilon\leqslant 1,\\
\dfrac{\upsilon\{1-\eta(\upsilon-1)\}}{2\sqrt{\eta}\sqrt{\eta+(1-\eta)\upsilon}\left\{\sqrt{\eta+(1-\eta)\upsilon}+(\upsilon-1)\sqrt{\eta}\right\}}&\text{when }\upsilon>1.
\end{cases}
\end{equation}
In the second scenario described in \eqref{upper_bound_derivative_eta}, the sign of the derivative is determined by the sign of $1-\eta(\upsilon-1)$, which is strictly decreasing in $\upsilon$, and equals $1-\eta\geqslant1-\eta_{2}\geqslant 0$ at $\upsilon=2$ (recall, from the statement of Theorem~\ref{thm:main_5}, that $\upsilon\leqslant 2$). We may, thus, conclude that the derivative in \eqref{upper_bound_derivative_eta} is always non-negative. Combining \eqref{sup_lower_bound_y} and \eqref{inf_upper_bound_y} with the observations made regarding \eqref{lower_bound_derivative_eta} and \eqref{upper_bound_derivative_eta}, we have:
\begin{align}
{}&\xi_{1}:=\sup_{\eta\in[\eta_{1},\eta_{2}]}\sup_{y\in[\epsilon,\upsilon]}\alpha\left(\frac{\sqrt{\eta+(1-\eta)y}-\sqrt{\eta}}{y}\right)^{2}=\alpha\left(\frac{\sqrt{\eta_{1}+(1-\eta_{1})\epsilon}-\sqrt{\eta_{1}}}{\epsilon}\right)^{2},\label{sup_lower_bound_y,eta}\\
{}&\xi_{2}:=\inf_{\eta\in[\eta_{1},\eta_{2}]}\inf_{y\in[\epsilon,\upsilon]}\alpha\left(\frac{\sqrt{\eta+(1-\eta)y}+\sqrt{\eta}}{y}\right)^{2}=\alpha\left(\frac{\sqrt{\eta_{1}+(1-\eta_{1})\upsilon}+\sqrt{\eta_{1}}}{\upsilon}\right)^{2}.\label{inf_upper_bound_y,eta}
\end{align}
From \eqref{between_roots_epsilon_upsilon_interval}, it is evident that $\kappa$ must be chosen to lie strictly between $\xi_{1}$ and $\xi_{2}$. For this to be possible, we must make sure that \eqref{inf_upper_bound_y,eta} strictly exceeds \eqref{sup_lower_bound_y,eta}, or, in other words,
\begin{align}
{}&\frac{\sqrt{\eta_{1}+(1-\eta_{1})\epsilon}-\sqrt{\eta_{1}}}{\epsilon}<\frac{\sqrt{\eta_{1}+(1-\eta_{1})\upsilon}+\sqrt{\eta_{1}}}{\upsilon}\nonumber\\
\Longleftrightarrow{}&\frac{\eta_{1}+(1-\eta_{1})\epsilon-\eta_{1}}{\epsilon\left\{\sqrt{\eta_{1}+(1-\eta_{1})\epsilon}+\sqrt{\eta_{1}}\right\}}<\frac{\eta_{1}+(1-\eta_{1})\upsilon-\eta_{1}}{\upsilon\left\{\sqrt{\eta_{1}+(1-\eta_{1})\upsilon}-\sqrt{\eta_{1}}\right\}}\nonumber\\
\Longleftrightarrow{}&\sqrt{\eta_{1}+(1-\eta_{1})\upsilon}-\sqrt{\eta_{1}}<\sqrt{\eta_{1}+(1-\eta_{1})\epsilon}+\sqrt{\eta_{1},}\nonumber
\end{align}
which is true if and only if the inequality involving $\upsilon$ and $\epsilon$ in \eqref{restrictions_on_epsilon} (which is the same as the inequality in \eqref{case_1}, since $\epsilon=\delta$ in this case) is satisfied. The analysis ends here for the case described in \eqref{case_1}, and any choice of $\kappa$ lying strictly between the expressions in \eqref{sup_lower_bound_y,eta} and \eqref{inf_upper_bound_y,eta} suffices to ensure that $\dot{V}(a(t),b(t),c(t))\leqslant 0$ for all $(a(t),b(t),c(t))\in\hat{\mathcal{S}}$, with equality if and only if $(a(t),b(t),c(t))\in\hat{\Lambda}$.

When we are in the scenario described in \eqref{restrictions_on_epsilon}, our next task is to focus on $y$ lying in the interval $[0,\epsilon]$ (which may equal, or be a superset of, the interval $[\delta,\epsilon]$). Continuing with \eqref{quadratic_rewritten}, we obtain the inequality: $\det(\Sigma)\geqslant-[\kappa^{2}\epsilon^{2}-4\kappa\eta\alpha+\alpha^{2}(1-\eta)^{2}]/4$, and this lower bound is positive if and only if
\begin{align}
\frac{\alpha\left\{2\eta-\sqrt{4\eta^{2}-\epsilon^{2}(1-\eta)^{2}}\right\}}{\epsilon^{2}}<\kappa<\frac{\alpha\left\{2\eta+\sqrt{4\eta^{2}-\epsilon^{2}(1-\eta)^{2}}\right\}}{\epsilon^{2}} \text{ for all }\eta\in[\eta_{1},\eta_{2}].\label{interval_for_kappa_0_epsilon}
\end{align}
Note that the last inequality in \eqref{restrictions_on_epsilon} ensures that the discriminant here is strictly positive. We see that
\begin{align}
{}&\frac{d}{d\eta}\left\{2\eta-\sqrt{4\eta^{2}-\epsilon^{2}(1-\eta)^{2}}\right\}
=\frac{-\epsilon^{2}(1-\eta)\left\{4(1-\eta)+\epsilon^{2}(1-\eta)+8\eta\right\}}{\sqrt{4\eta^{2}-\epsilon^{2}(1-\eta)^{2}}\left\{2\sqrt{4\eta^{2}-\epsilon^{2}(1-\eta)^{2}}+4\eta+\epsilon^{2}(1-\eta)\right\}},\nonumber
\end{align}
showing us that the lower bound in \eqref{interval_for_kappa_0_epsilon} is strictly decreasing in $\eta$. On the other hand,
\begin{align}
{}&\frac{d}{d\eta}\left\{2\eta+\sqrt{4\eta^{2}-\epsilon^{2}(1-\eta)^{2}}\right\}
=2+\frac{4\eta+\epsilon^{2}(1-\eta)}{\sqrt{4\eta^{2}-\epsilon^{2}(1-\eta)^{2}}},\nonumber
\end{align}
showing us that the upper bound in \eqref{interval_for_kappa_0_epsilon} is strictly increasing in $\eta$. We can therefore write
\begin{align}
{}&\xi_{3}:=\sup_{\eta\in[\eta_{1},\eta_{2}]}\frac{\alpha\left\{2\eta-\sqrt{4\eta^{2}-\epsilon^{2}(1-\eta)^{2}}\right\}}{\epsilon^{2}}=\frac{\alpha\left\{2\eta_{1}-\sqrt{4\eta_{1}^{2}-\epsilon^{2}(1-\eta_{1})^{2}}\right\}}{\epsilon^{2}},\label{lower_bound_kappa_0_epsilon}\\
{}&\xi_{4}:=\inf_{\eta\in[\eta_{1},\eta_{2}]}\frac{\alpha\left\{2\eta+\sqrt{4\eta^{2}-\epsilon^{2}(1-\eta)^{2}}\right\}}{\epsilon^{2}}=\frac{\alpha\left\{2\eta_{1}+\sqrt{4\eta_{1}^{2}-\epsilon^{2}(1-\eta_{1})^{2}}\right\}}{\epsilon^{2}}.\label{upper_bound_kappa_0_epsilon}
\end{align}

We now have to ensure that the open interval $(\xi_{1},\xi_{2})$, where $\xi_{1}$ and $\xi_{2}$ are as defined in \eqref{sup_lower_bound_y,eta} and \eqref{inf_upper_bound_y,eta}, has a non-empty intersection with the open interval $(\xi_{3},\xi_{4})$, where $\xi_{3}$ and $\xi_{4}$ are as defined in \eqref{lower_bound_kappa_0_epsilon} and \eqref{upper_bound_kappa_0_epsilon} (so that $\kappa$ can be chosen to lie inside \emph{each} of these two intervals). Note that $\xi_{4}>\xi_{1}$ since
\begin{align}
{}&2\eta_{1}+\sqrt{4\eta_{1}^{2}-\epsilon^{2}(1-\eta_{1})^{2}}-\left(\sqrt{\eta_{1}+(1-\eta_{1})\epsilon}-\sqrt{\eta_{1}}\right)^{2}\nonumber\\
\geqslant{}&\frac{2\left\{4\eta_{1}^{2}-(1-\eta_{1})^{2}\epsilon^{2}\right\}+4\eta_{1}(1-\eta_{1})\epsilon}{\sqrt{4\eta_{1}^{2}-\epsilon^{2}(1-\eta_{1})^{2}}+2\sqrt{\eta_{1}^{2}+\eta_{1}(1-\eta_{1})\epsilon}+(1-\eta_{1})\epsilon},\nonumber
\end{align}
which is positive because of the last inequality in \eqref{restrictions_on_epsilon}. On the other hand, we have
\begin{align} 
{}&\upsilon^{-2}\left\{\sqrt{\eta_{1}+(1-\eta_{1})\upsilon}+\sqrt{\eta_{1}}\right\}^{2}-\epsilon^{-2}\left\{2\eta_{1}-\sqrt{4\eta_{1}^{2}-\epsilon^{2}(1-\eta_{1})^{2}}\right\}\nonumber\\
>{}&2(1-\eta_{1})^{2}\left\{4\eta_{1}^{2}+2\eta_{1}(1-\eta_{1})\upsilon-(1-\eta_{1})^{2}\upsilon^{2}\right\}\left\{\sqrt{\eta_{1}+(1-\eta_{1})\upsilon}-\sqrt{\eta_{1}}\right\}^{-2}\nonumber\\&\left\{2\eta_{1}+\sqrt{4\eta_{1}^{2}-\epsilon^{2}(1-\eta_{1})^{2}}\right\}^{-1}\left[\sqrt{4\eta_{1}^{2}-\epsilon^{2}(1-\eta_{1})^{2}}+2\sqrt{\eta_{1}^{2}+\eta_{1}(1-\eta_{1})\upsilon}+(1-\eta_{1})\upsilon\right]^{-1},\nonumber
\end{align}
where, due to the assumption that $\upsilon\leqslant(1+\sqrt{5})(1-\eta_{1})^{-1}\eta_{1}$ (in the statement of Theorem~\ref{thm:main_5}), we have 
\begin{align}
{}&4\eta_{1}^{2}+2\eta_{1}(1-\eta_{1})\upsilon-(1-\eta_{1})^{2}\upsilon^{2}
=-(1-\eta_{1})^{2}\left\{\upsilon+\frac{(\sqrt{5}-1)\eta_{1}}{1-\eta_{1}}\right\}\left\{\upsilon-\frac{(1+\sqrt{5})\eta_{1}}{1-\eta_{1}}\right\}\geqslant 0,\nonumber
\end{align}
thus ensuring that $\xi_{2}>\xi_{3}$. This finishes the proof of the fact that indeed, $\kappa$ can be chosen to lie inside each of the open intervals $(\xi_{1},\xi_{2})$ and $(\xi_{3},\xi_{4})$. With such a choice of $\kappa$, in the complement of the scenario described in \eqref{case_1}, we have $\dot{V}(a(t),b(t),c(t))\leqslant 0$ for all $(a(t),b(t),c(t))\in\hat{\mathcal{S}}$, with equality if and only if $(a(t),b(t),c(t))\in\hat{\Lambda}$.

Thus far, we have established that the function $V$, defined in \eqref{candidate_Lyapunov}, satisfies (for time $t\geqslant 0$)
\begin{equation}\label{V_main_properties}
\begin{cases}
&V(a,b,c)\geqslant 0 \text{ for all }(a,b,c)\in\hat{\mathcal{S}}, \text{ with equality iff }(a,b,c)\in\hat{\Lambda},\\
&\dot{V}(a(t),b(t),c(t))\leqslant 0 \text{ for }(\dot{a}(t),\dot{b}(t),\dot{c}(t))\in F(a(t),b(t),c(t)), \text{ with equality iff }(a(t),b(t),c(t))\in\hat{\Lambda}.
\end{cases}
\end{equation}

\textbf{Showing, using $V$, that any compact, connected, internally chain transitive invariant set is contained in $\hat{\Lambda}$:} Recall that $\varphi=\{\varphi(t): t\geqslant 0\}$ is a solution to a given differential inclusion $\dot{\mathbf{x}}(t)\in F(\mathbf{x}(t))$ for $t\geqslant 0$, with initial condition $\mathbf{x}(0)=\mathbf{x}_{0}$, if $t\mapsto \varphi(t)$ is absolutely continuous on $[0,\infty)$, $\dot{\varphi}(t)\in F(\varphi(t))$ for almost every $t\geqslant 0$, and $\varphi(0)=\mathbf{x}_{0}$. We let $\Phi(\mathbf{x}_{0})$ indicate the set of all such solutions, for each $\mathbf{x}_{0}$ belonging to the domain of definition of $F$. Given a closed, invariant set $L$, we also define $\Phi^{L}(\mathbf{x}_{0})\subset\Phi(\mathbf{x}_{0})$, for each $\mathbf{x}_{0}\in L$, to be the set of all solutions $\varphi$ that are contained entirely in $L$, i.e.\ 
\begin{equation}
\Phi^{L}(\mathbf{x}_{0})=\left\{\varphi\in\Phi(\mathbf{x}_{0}):\varphi(t)\in L \text{ for all }t\geqslant 0\right\}.\nonumber
\end{equation}
We let $\Phi^{L}_{t}(\mathbf{x}_{0})$ denote the set of $\varphi(t)$ for each $\varphi\in\Phi^{L}(\mathbf{x}_{0})$. Proposition 3.19 of \cite{benaim2005stochastic} may now be restated as follows, with the proof following \emph{mutatis mutandis} the proof of Proposition 3.19 of \cite{benaim2005stochastic} (with $\Phi$ replaced everywhere by $\Phi^{L}$):
\begin{prop}\label{prop:3.19_reformulated}
Let $L$ be a closed, invariant set corresponding to a given differential inclusion, $\dot{\mathbf{x}}(t)\in F(\mathbf{x}(t))$ for $t\geqslant 0$, and let $U$ be an open subset of $L$ (under the topology induced on $L$), such that its closure, $\overline{U}$, is compact. Suppose, for some $T>0$, we have $\varphi(T)\in U$ for each $\varphi\in\Phi^{L}(\mathbf{x}_{0})$, for each $\mathbf{x}_{0}\in\overline{U}$ (in other words, $\Phi^{L}_{T}(\overline{U})=\bigcup_{\mathbf{x}_{0}\in \overline{U}}\Phi^{L}_{t}(\mathbf{x}_{0})\subset U$). Then $U$ is a fundamental neighbourhood of some attractor for $\Phi^{L}$.
\end{prop}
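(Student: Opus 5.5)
I will follow the proof of Proposition 3.19 of \cite{benaim2005stochastic} step by step, working throughout in the compact metric space $L$ with the restricted set-valued flow $\Phi^{L}$. The goal is to exhibit the candidate attractor
\begin{equation}
A=\bigcap_{t\geqslant 0}\overline{\Phi^{L}_{[t,\infty)}(U)},\nonumber
\end{equation}
the omega-limit set of $U$ under $\Phi^{L}$. I then need to show three things: $A$ is compact and invariant for $\Phi^{L}$, $A$ is attracting with $U$ as a fundamental neighbourhood, and $A$ is contained in $U$.

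The first step is to record the basic properties of $\Phi^{L}$ that replace those of $\Phi$ used in \cite{benaim2005stochastic}. Invariance of $L$ guarantees that $\Phi^{L}(\mathbf{x}_{0})\neq\emptyset$ for every $\mathbf{x}_{0}\in L$. Concatenation and translation of solutions lying in $L$ again lie in $L$, so the semigroup inclusion $\Phi^{L}_{t+s}(\mathbf{x})\subset\Phi^{L}_{t}(\Phi^{L}_{s}(\mathbf{x}))$ holds. Finally, $\Phi^{L}$ is upper semicontinuous with compact values on compact sets. For this last property I use that the solution set of a differential inclusion with closed, convex, compact-valued, linearly bounded $F$ is compact in the topology of uniform convergence on compacts (Arzel\`a--Ascoli together with a convergence theorem). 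Since $L$ is closed, uniform limits of solutions staying in $L$ again stay in $L$.

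Next I iterate the hypothesis. Because $\Phi^{L}_{T}(\overline{U})\subset U\subset\overline{U}$, the semigroup property gives $\Phi^{L}_{nT}(\overline{U})\subset U$ for every $n$. Upper semicontinuity and compactness then give that $K=\Phi^{L}_{T}(\overline{U})$, and more generally $\Phi^{L}_{[T,2T]}(\overline{U})$, is a compact subset of $U$ at positive distance from $L\setminus U$. From this it follows that $\Phi^{L}_{t}(\overline{U})\subset\Phi^{L}_{[0,T]}(K)$ for all $t\geqslant T$. Hence $A$ is a nonempty compact subset of $\overline{\Phi^{L}_{[T,\infty)}(U)}$. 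Using the $nT$ iteration together with a compactness argument, I will show that $A\subset U$.

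Invariance of $A$ comes from the standard omega-limit argument, using $L$-valued solutions throughout. Given $\mathbf{y}\in A$, take $\mathbf{y}=\lim\varphi_{n}(t_{n})$ with $t_{n}\to\infty$, shift the solutions $\varphi_{n}$ by $t_{n}$, and extract a subsequence converging uniformly on compacts of $\mathbb{R}$ by the compactness of the solution set. The limit is a solution through $\mathbf{y}$ that lies in $A$ for all times and in $L$. The attracting property is the step I expect to be the main obstacle: I must show that $\sup_{\mathbf{x}\in U}\dist(\Phi^{L}_{t}(\mathbf{x}),A)\to 0$ uniformly. I would argue by contradiction. Suppose there are $\epsilon>0$, $\mathbf{x}_{n}\in U$, $t_{n}\to\infty$ and $\varphi_{n}\in\Phi^{L}(\mathbf{x}_{n})$ with $\dist(\varphi_{n}(t_{n}),A)\geqslant\epsilon$. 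By compactness of $\overline{U}$ a limit point exists. That limit point lies in $\bigcap_{t}\overline{\Phi^{L}_{[t,\infty)}(U)}=A$, which is a contradiction. The delicate point is that $\overline{\Phi^{L}_{[t,\infty)}(U)}$ is a decreasing family of nonempty compact sets, which needs the upper semicontinuity recorded in the first step. Once attraction is established, $A$ is an attractor for $\Phi^{L}$ with fundamental neighbourhood $U$.
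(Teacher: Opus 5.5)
Your overall architecture is the standard one that the paper invokes by transplanting Proposition 3.19 of \cite{benaim2005stochastic} to $\Phi^{L}$. That architecture consists of taking $A=\bigcap_{t\geqslant 0}\overline{\Phi^{L}_{[t,\infty)}(U)}$, getting boundedness from $\Phi^{L}_{t}(\overline{U})\subset\Phi^{L}_{[0,T]}(K)$, proving invariance by shifting solutions and extracting limits, and proving attraction by contradiction. Those parts are sound. The attraction step you flag as the main obstacle is in fact immediate from the definition of $A$.

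The genuine gap is the inclusion $A\subset U$; without it $U$ is not even a neighbourhood of $A$, so it cannot be a fundamental neighbourhood. You defer this to ``the $nT$ iteration together with a compactness argument''. The only supporting claim you make is that $\Phi^{L}_{[T,2T]}(\overline{U})$ is a compact subset of $U$, and this is false, because the hypothesis controls only the times $T,2T,\ldots$. Here is a counterexample:
\begin{itemize}
\item Take $L=\mathbb{R}^{2}$, the flow $\dot z=(-1+2\pi i)z$ (so $T=1$), and $U=\{(x,y):(x-5)^{2}/36+100y^{2}<1\}$.
\item $U$ is convex and contains $0$, so $\Phi_{1}(\overline{U})=e^{-1}\overline{U}\subset U$.
\item However, $\Phi_{3/2}(\overline{U})=-e^{-3/2}\overline{U}$ contains $(-11e^{-3/2},0)$, which lies outside $U$.
\end{itemize}
The $nT$ iteration plus compactness only shows that the \emph{discrete} limit set $\bigcap_{n}\overline{\bigcup_{m\geqslant n}\Phi^{L}_{mT}(U)}$ lies in $K$. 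It does not control the continuous-time set $A$, whose points are reached at times strictly between multiples of $T$.

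What is missing is an argument that solutions eventually lie in $K$ at \emph{all} large times. One way to supply it:
\begin{itemize}
\item Since $K=\Phi^{L}_{T}(\overline{U})$ is compact, $\dist(K,L\setminus U)>0$. The linear growth bound on $F$ bounds the speed of solutions near $K$. Hence there is $\sigma>0$ with $\Phi^{L}_{[0,\sigma]}(K)\subset U$.
\item Fix $\varphi\in\Phi^{L}(\mathbf{x}_{0})$ with $\mathbf{x}_{0}\in\overline{U}$. Prove by induction on $m$ that $\varphi(jT+m\sigma)\in K$ for all $j\geqslant m+1$.
\item The case $m=0$ is the iterated hypothesis.
\item If the claim holds for $m$, then $\varphi(jT+m\sigma+s)\in U$ for all $s\in[0,\sigma]$. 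Applying the hypothesis to the shifted solution $\varphi(jT+(m+1)\sigma+\cdot)\in\Phi^{L}(\varphi(jT+(m+1)\sigma))$ gives $\varphi((j+1)T+(m+1)\sigma)\in K$, which is the claim for $m+1$.
\item With $M=\lceil T/\sigma\rceil$, this yields $\varphi(t)\in U$ for all $t\geqslant MT$, and hence $\varphi(t)\in K$ for all $t\geqslant (M+1)T$.
\item Therefore $\Phi^{L}_{[(M+1)T,\infty)}(\overline{U})\subset K$, and so $A\subset K\subset U$.
\end{itemize}
Insert this step and drop the claim about $\Phi^{L}_{[T,2T]}(\overline{U})$; the rest of your argument then goes through.

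A minor point: $L$ is only assumed closed, not compact. The limit points you extract therefore come from the boundedness of $\Phi^{L}_{[0,\infty)}(\overline{U})=\Phi^{L}_{[0,T]}(\overline{U})$, not from compactness of $L$ or of $\overline{U}$ alone.
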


Let $L$ be \emph{any} compact, connected, internally chain transitive invariant set (contained in $\hat{\mathcal{S}}$) correpsonding to the differential inclusion mentioned in the statement of Theorem~\ref{thm:main_5}, and let $\theta=\inf\{V(a,b,c):(a,b,c)\in L\}$. Since $L$ is compact and $V$ continuous on $\hat{\mathcal{S}}$ (this follows from the fact that $q_{\alpha}$ is continuous on $\mathcal{S}$, and each of $\hat{G}$ and $g$ is continuous on $[0,1]$), the value $\theta$ must be attained in $L$. Let $(a_{1},b_{1},c_{1})$ be \emph{any} point in $L$ with $V(a_{1},b_{1},c_{1})=\theta$. If possible, let $(a_{1},b_{1},c_{1})\notin\hat{\Lambda}$. From the definition of $\hat{\Lambda}$ in \eqref{hat{Lambda}_defn}, and the continuity of the function $q_{\alpha}$ on $\hat{\mathcal{S}}$, it is evident that $\hat{\Lambda}$ is a compact set. Since $(a_{1},b_{1},c_{1})\notin\hat{\Lambda}$ and any solution $\varphi\in\Phi(a_{1},b_{1},c_{1})$ is absolutely continuous in time, we can find some $T_{\varphi}>0$ such that $\varphi(t)\notin\hat{\Lambda}$ for all $t\in[0,T_{\varphi}]$. By the second assertion in \eqref{V_main_properties}, $\dot{V}(\varphi(t))<0$ for all $t\in[0,T_{\varphi}]$, or, in other words, $V(\varphi(t))$ is strictly decreasing in $t$ for all $t\in[0,T_{\varphi}]$. Since $L$ is invariant (see, for instance, Lemma 3.5 of \cite{benaim2005stochastic}), there exists at least one $\varphi\in\Phi(a_{1},b_{1},c_{1})$ such that $\varphi(t)\in L$ for each $t\geqslant 0$. For this specific $\varphi$, we have $V(\varphi(T_{\varphi}))<V(\varphi(0))=V(a_{1},b_{1},c_{1})=\theta$ and $\varphi(T_{\varphi})\in L$, contradicting the choice of $\theta$ as the infimum of the values assumed by $V$ on $L$. This leads to the conclusion that $(a_{1},b_{1},c_{1})\in\hat{\Lambda}$ for every $(a_{1},b_{1},c_{1})\in L$ with $V(a_{1},b_{1},c_{1})=\theta$. Moreover, from the first assertion in \eqref{V_main_properties}, we have $\theta=0$.

Let $\{\theta_{n}\}$ be \emph{any} sequence of positive reals with $\theta_{n}\downarrow 0$. We define $L_{n}=\{(a,b,c)\in L:V(a,b,c)<\theta_{n}\}$. Let us fix any $T>0$. As $V$ is continuous on $\hat{\mathcal{S}}$, each $L_{n}$ is open relative to $L$, and since $L_{n}\subset L$ where $L$ is compact, we have $\overline{L}_{n}\subset L$ as well, making $\overline{L}_{n}$ compact too. Our goal, now, is to show that $\varphi(T)\in L_{n}$ for every $\varphi\in\Phi^{L}(a,b,c)$, for every $(a,b,c)\in\overline{L}_{n}$, for each $n\in\mathbb{N}$. This is accomplished in the two paragraphs that follow, always keeping in mind that, for each $(a,b,c)\in\overline{L}_{n}\subset L$, we have $\varphi(T)\in L$ for each $\varphi\in\Phi^{L}(a,b,c)$, by definition of the set $\Phi^{L}$.

Let $(a,b,c)\in\overline{L}_{n}\cap\hat{\Lambda}$. For any $\varphi\in\Phi^{L}(a,b,c)$, we know, combining the two assertions stated in \eqref{V_main_properties}, that $0\leqslant V(\varphi(t))\leqslant V(\varphi(0))=V(a,b,c)=0$ for each $t>0$, implying that $V(\varphi(T))=0$. This, along with the assertion stated in the last sentence of the previous paragraph, implies that $\varphi(T)\in L_{n}$. 

On the other hand, if $(a,b,c)\in\overline{L}_{n}\setminus\hat{\Lambda}$, we have $V(a,b,c)\leqslant\theta_{n}$, by definition of $L_{n}$ and since $(a,b,c)\in\overline{L}_{n}$. Since each $\varphi\in\Phi^{L}(a,b,c)$ is absolutely continuous in time and $\hat{\Lambda}$ is compact, there exists some $T_{\varphi}>0$ such that $\varphi(t)\notin\hat{\Lambda}$ for each $t\in[0,T_{\varphi}]$. By the second assertion stated in \eqref{V_main_properties}, we have $\dot{V}(\varphi(t))<0$ for each $t\in[0,T_{\varphi}]$, so that $V(\varphi(t))$ is strictly decreasing in $t$ for all $t\in[0,T_{\varphi}]$. If $T\in[0,T_{\varphi}]$, we have
\begin{equation}
V(\varphi(T_{\varphi}))\leqslant V(\varphi(T))<V(\varphi(0))=V(a,b,c)\leqslant \theta_{n},\nonumber
\end{equation}
allowing us to conclude that $\varphi(T)\in L_{n}$. Suppose, now, that $T>T_{\varphi}$. Once again, by the second assertion stated in \eqref{V_main_properties}, we have $\dot{V}(\varphi(t))\leqslant 0$ for each $t\in(T_{\varphi},T]$, so that $V(\varphi(t))$ is monotonically decreasing in $t$ for all $t\in(T_{\varphi},T]$. Therefore, we can write
\begin{equation}
V(\varphi(T))\leqslant V(\varphi(T_{\varphi}))<V(\varphi(0))=V(a,b,c)\leqslant \theta_{n},\nonumber
\end{equation}
once again leading to the conclusion that $\varphi(T)\in L_{n}$. 

We have proved that $\varphi(T)\in L_{n}$ for each $\varphi\in\Phi^{L}(a,b,c)$, for each $(a,b,c)\in\overline{L}_{n}$, for every $n\in\mathbb{N}$. By Proposition~\ref{prop:3.19_reformulated}, we conclude that $L_{n}$ must be a fundamental neighbourhood of some attractor for $\Phi^{L}$. However, by Proposition 3.20 of \cite{benaim2005stochastic}, $L$ cannot contain, as a proper subset, any attracting set (and hence, any attractor) for $\Phi^{L}$, since $L$ has been assumed to be internally chain transitive. Therefore, we must have $L=L_{n}$ for each $n$, leading to $L=\bigcap_{n}L_{n}=\{(a,b,c)\in L:V(a,b,c)=0\}$. Since we have already shown that, for each $(a,b,c)\in L$ with $V(a,b,c)=\theta=0$, we have $(a,b,c)\in\hat{\Lambda}$, we conclude that $L\subset\hat{\Lambda}$.
\end{proof}

\begin{proof}[Proof of Theorem~\ref{thm:main_4}]
In this special scenario, instead of \eqref{sa_growing_1}, we write:
\begin{align}
\frac{A_{n+1}}{n+1}={}&\frac{A_{n}}{n}+\frac{k_{n}}{s_{n+1}}\left[\hat{\epsilon}_{n+1,1}+\tilde{\delta}_{n,1}+\eta\hat{h}_{1}\left(\frac{A_{n}}{n},\frac{B_{n}}{s_{n}},\frac{C_{n}}{s_{n}}\right)\right],\label{sa_growing_1_modified}
\end{align}
where $\hat{h}_{1}$ is as defined via \eqref{hat{h}_defn_growing_size} and \eqref{hat{H}_{1}_defn}, $\hat{\epsilon}_{n+1,1}$ is as defined in \eqref{error_defns_sa_growing}, and we define 
\begin{align}\label{new_error_defns_special}
\tilde{\delta}_{n,1}&=\frac{s_{n+1}}{(n+1)k_{n}}\left\{H_{1,n}\left(\frac{A_{n}}{n},\frac{B_{n}}{s_{n}},\frac{C_{n}}{s_{n}}\right)-\hat{H}_{1}\left(\frac{A_{n}}{n},\frac{B_{n}}{s_{n}},\frac{C_{n}}{s_{n}}\right)\right\}+\left\{\frac{s_{n+1}}{(n+1)k_{n}}-\eta\right\}\hat{h}_{1}\left(\frac{A_{n}}{n},\frac{B_{n}}{s_{n}},\frac{C_{n}}{s_{n}}\right).
\end{align}
Right away, we see that $\tilde{\delta}_{n,1}=\hat{\delta}_{n,1}+\{(n+1)^{-1}k_{n}^{-1}s_{n+1}-\eta\}\hat{h}_{1}(n^{-1}A_{n},s_{n}^{-1}B_{n},s_{n}^{-1}C_{n})$, where $\hat{\delta}_{n,1}$ is as defined in \eqref{error_defns_sa_growing}. Thus \eqref{sa_growing_1_modified}, along with \eqref{sa_growing_2}, now boils down to a stochastic approximation corresponding to a typical autonomous ODE, as opposed to a differential inclusion. We can, therefore, apply Theorem~\ref{thm:borkar_a.s.} to this set-up.

Having proved Theorem~\ref{thm:main_3}, we already know that each of $\sum_{n\geqslant N}s_{n+1}^{-1}k_{n}(\hat{\epsilon}_{n+1,1},\hat{\epsilon}_{n+1,2},\hat{\epsilon}_{n+1,3})^{T}$ and $\sum_{n\geqslant N}s_{n+1}^{-1}k_{n}\sum_{j\in[3]}|\hat{\delta}_{n,j}|$ converges almost surely when the second criterion stated in \eqref{k_{n}_sequence_conditions} is true and one of \eqref{A1}, \eqref{A2} and \eqref{A3} is satisfied. We also have (since $\hat{H}_{1}(a,b,c)\in[0,1]$ and $a\in[0,1]$ for $(a,b,c)\in\hat{\mathcal{S}}$):
\begin{align}
\sum_{n\geqslant N}\frac{k_{n}}{s_{n+1}}\left|\left\{\frac{s_{n+1}}{(n+1)k_{n}}-\eta\right\}\hat{h}_{1}\left(\frac{A_{n}}{n},\frac{B_{n}}{s_{n}},\frac{C_{n}}{s_{n}}\right)\right|\leqslant \sum_{n\geqslant N}\frac{k_{n}}{s_{n+1}}\left|\frac{s_{n+1}}{(n+1)k_{n}}-\eta\right|,\nonumber
\end{align}
which converges because of \eqref{k_{n}_eta_conditions}. Assumption~\eqref{borkar_3} is hereby verified.

Since one of \eqref{A1}, \eqref{A2} and \eqref{A3} is assumed to hold, the function $g$ is Lipschitz on $[0,1]$, while the function $q_{\alpha}$, defined via \eqref{q_defn}, is in $\mathcal{C}^{(1)}(\hat{\mathcal{S}})$. Therefore, each of $\hat{h}_{1}$, $\hat{h}_{2}$ and $\hat{h}_{3}$ is Lipschitz on $\hat{\mathcal{S}}$ (as evident from \eqref{hat{h}_defn_growing_size}, \eqref{hat{H}_{1}_defn}, \eqref{hat{H}_{2}_defn} and \eqref{hat{H}_{3}_defn}. Consequently, the drift function appearing in \eqref{ODE_special_case} is Lipschitz on $\hat{\mathcal{S}}$ as well. This verifies Assumption~\eqref{borkar_1}. Finally, Assumption~\eqref{borkar_2} is satisfied since the first two criteria of \eqref{k_{n}_sequence_conditions} are assumed to hold. This completes the proof.
\end{proof}

\begin{proof}[Proof of Proposition~\ref{prop:hat{G}_unique_root_examples}]
When $f(x)=x$, we have $g(x)=(2p-1)x+(1-p)$, which, by \eqref{hat{G}_defn}, leads to: 
\begin{align}
{}&\hat{G}(q)
=2q^{2}(1-q)(2p-1)+(1-2p)q(1-q)+2\alpha(p-1)q+\alpha(1-p) \text{ for all }q\in[0,1]\nonumber\\
\implies{}&\hat{G}'(q)=
(6q-6q^{2}-1)(2p-1)-2\alpha(1-p) \text{ for all }q\in[0,1],\label{hat{G}_derivative_f(x)=x}
\end{align}
so that, as in the proof of Proposition~\ref{prop:examples_g_unique_root_of_G}, the derivative $\hat{G}'(q)$ is symmetric around $q=1/2$. Before we proceed any further, we note that $\hat{G}(0)=\alpha(1-p)>0$ and $\hat{G}(1)=-\alpha(1-p)<0$, so that
\begin{enumerate*}
\item the curve $y=\hat{G}(x)$ intersects the $x$-axis at least once, and
\item the curve $y=\hat{G}(x)$ lies \emph{above} the $x$-axis at $x=0$ and \emph{beneath} the $x$-axis at $x=1$. 
\end{enumerate*}
From the first of these, it suffices for us to prove that the curve $y=\hat{G}(x)$ can intersect the $x$-axis at most once. The second observation comes in handy in some of the arguments that follow.

We now consider $p\in(1/2,1)$, so that the minimum value attained by $\hat{G}'$ equals $\hat{G}'(0)=\hat{G}'(1)=-(2p-1)-2\alpha(1-p)$ and the maximum value attained by $\hat{G}'$ equals $\hat{G}'(1/2)=(2p-1)/2 -2\alpha(1-p)$. If $4\alpha\geqslant(2p-1)/(1-p)$, we have $\hat{G}'(q)\leqslant 0$ for all $q\in[0,1]$ (with possible equality only at $q=1/2$), implying that $\hat{G}$ is strictly decreasing for $q\in[0,1]$ and hence the curve $y=\hat{G}(x)$ can intersect the $x$-axis at most once. If $4\alpha<(2p-1)/(1-p)$, we conclude, from the symmetry of the expression in \eqref{hat{G}_derivative_f(x)=x} around $q=1/2$ and its quadratic nature, that its roots are given by some $0<\gamma_{1}<1/2<\gamma_{2}<1$, with $\gamma_{1}+\gamma_{2}=1$, with
\begin{enumerate*}
\item $\hat{G}'(q)<0$ for all $q\in[0,\gamma_{1})$, so that $\hat{G}$ is strictly decreasing for $q\in[0,\gamma_{1})$, 
\item $\hat{G}'(q)>0$ for all $q\in(\gamma_{1},\gamma_{2})$, so that $\hat{G}$ is strictly increasing for $q\in(\gamma_{1},\gamma_{2})$,
\item and $\hat{G}'(q)<0$ for all $q\in(\gamma_{2},1]$, so that $\hat{G}$ is strictly decreasing for $q\in(\gamma_{2},1]$.
\end{enumerate*}
If $\hat{G}(\gamma_{2})<0$, the behaviour described above ensures that the curve $y=\hat{G}(x)$ intersects the $x$-axis at a unique value of $x$ lying in the interval $(0,\gamma_{1})$, and no further intersection happens. Likewise, if $\hat{G}(\gamma_{1})>0$, the behaviour described above ensures that the curve $y=\hat{G}(x)$ intersects the $x$-axis at a unique value of $x$ lying in the interval $(\gamma_{2},1)$, and no intersection happens sooner than that. These two criteria give rise to \eqref{root_inequalities_hat{G}}.

Finally, we consider $f(x)=e^{x-1}$ for all $x\in[0,1]$, so that we have, from \eqref{hat{G}_defn}:
\begin{align}
\hat{G}(q)
={}&2(2p-1)q(1-q)e^{q-1}+(1-2p)q(1-q)+\alpha (2p-1)e^{q-1}-\alpha q+\alpha(1-p).\label{hat{G}_f_exponential}
\end{align}
We assume $p\in(1/2,1)$ and $\alpha>0.15218(2p-1)/(1-p)$. The function $q\mapsto 1+(1-2p)e^{q-1}$ is strictly decreasing in $q$ for $q\in[0,1]$, with minimum value, attained at $q=1$, equal to $2(1-p)>0$, proving that this function is strictly positive for all $q\in[0,1]$. Next, we consider the function $q \mapsto 1-2q-2(1-q-q^{2})e^{q-1}$, whose derivative equals $-2+2(3q+q^{2})e^{q-1}$, and second derivative equals $2(3+5q+q^{2})e^{q-1}>0$, proving that the function is strictly convex. Moreover, its derivative equals $-2$ at $q=0$, and $+6$ at $q=1$, proving that the function strictly decreases and then strictly increases as $q$ goes from $0$ to $1$. Its minimum, therefore, must be attained at the unique value of $q$, in $(0,1)$, for which its derivative vanishes, i.e.\ $q$ satisfying the equation $(3q+q^{2})e^{q-1}=1$. Numerical estimates reveal that the minimum of $q \mapsto 1-2q-2(1-q-q^{2})e^{q-1}$ is attained at $q\approx 0.48206$, and this minimum value equals $\approx -0.30436$. The maximum value that $(1-2p)\{(1-2q)-2(1-q-q^{2})e^{q-1}\}$ can attain as $p\in(1/2,1)$ remains fixed and $q$ varies over $[0,1]$, is thus equal to $0.30436(2p-1)$. Now, setting $\alpha>0.15218(2p-1)/(1-p)$, and using the observations above, we obtain:
\begin{align}
\hat{G}'(q)
={}&(1-2p)\{(1-2q)-2(1-q-q^{2})e^{q-1}\}-\alpha\{1+(1-2p)e^{q-1}\}\nonumber\\
<{}&(1-2p)\{(1-2q)-2(1-q-q^{2})e^{q-1}\}-\frac{0.15218(2p-1)}{1-p}\{1+(1-2p)e^{q-1}\}<0.\nonumber
\end{align}
This proves that when $p\in(1/2,1)$ and $\alpha>0.15218(2p-1)/(1-p)$, the function $\hat{G}$ is strictly decreasing in $[0,1]$ and hence, may have at most one root in $[0,1]$. Since $\hat{G}(0)=\alpha(2p-1)e^{-1}+\alpha(1-p)>0$ and $\hat{G}(1)=-\alpha(1-p)<0$ from \eqref{hat{G}_f_exponential}, hence $\hat{G}$ must have at least one root in $(0,1)$. Together, these two observations imply that $\hat{G}$ has a unique root in $[0,1]$.
\end{proof}

\section{Acknowledgements}
A.\ Roy acknowledges support from the IIM-K SGRP Research Grant (No. SGRP/2025-26/22) for the accomplishment of this project.

\bibliography{ERW_bib}
\end{document}